\newcommand{\B}{\mathbb{B}}
\newcommand{\E}{\mathbb{E}}
\newcommand{\N}{\mathbb{N}}
\renewcommand{\P}{\mathbb{P}}
\newcommand{\R}{\mathbb{R}}
\newcommand{\Po}{\text{Po}}
\newcommand{\diam}{\text{diam}}
\newcommand{\diag}{\text{diag}}
\newcommand{\PRM}{\text{PRM}}
\newcommand{\ind}{\mathds{1}}
\newcommand{\con}{\eta}
\newcommand{\boundSet}{B}
\newcommand{\ZVhelp}{V}
\spnewtheorem{condition}{Condition}{\bf}{\it}
 \journalname{Extremes}
\begin{document}

\title{Limit laws for the diameter of a set of random points from a distribution supported by a smoothly bounded set
}



\author{Michael Schrempp 
}


\institute{M. Schrempp \at
              Karlsruhe Institute of Technology,
              Institute of Stochastics,
              Englerstr. 2,
              76131 Karlsruhe, Germany\\
              Tel.: +49-721-60843264\\
              \email{schrempp@kit.edu}           
}


%
\maketitle

\vspace{-2cm}
\begin{abstract}
We study the asymptotic behavior of the maximum interpoint distance of random points in a $d$-dimensional set with a unique diameter and a smooth boundary at the poles.
Instead of investigating only a fixed number of $n$ points as $n$ tends to infinity, we consider the much more general setting in which the random points are the supports of appropriately defined Poisson processes.
The main result covers the case of uniformly distributed points within a $d$-dimensional ellipsoid with a unique major axis. Moreover, several generalizations of the main result are established, for example a limit law for the maximum interpoint distance of random points from a Pearson type II distribution.

\keywords{Maximum interpoint distance \and geometric extreme value theory \and Poisson process \and uniform distribution in an ellipsoid \and Pearson Type II distribution}
\subclass{60D05 \and 60F05 \and 60G55 \and 60G70 \and 62E20}
\end{abstract}

\section{Introduction}
\label{sec_introduction}
For some fixed integer $d \ge 2$, let $Z,Z_1,Z_2,\ldots$ be independent and identically distributed (i.i.d.) $d$-dimensional random vectors, defined  on a common probability space $(\Omega,{\cal A},\P)$. We assume that the distribution $\P_Z$ of $Z$ is absolutely continuous with respect to Lebesgue measure.
Writing $|\cdot |$ for the Euclidean norm on $\R^d$, the asymptotical behavior of the so-called maximum interpoint distance
$$
M_n := \max\limits_{1 \le i , j \le n}|Z_i - Z_j|
$$
as $n $ tends to infinity
has been a topic of interest for more than 20 years. This behavior is closely related to the support $S\subset \R^d$ of $\P_{Z}$, which is the smallest closed set $C$ satisfying $\P_Z(C)=1$. Writing
$$
\diam(K):=\sup_{x,y \in K}|x-y|
$$
for the diameter of a set $K \subset \R^d$, we obviously have
$
M_n \overset{\text{a.s.}}{\longrightarrow } \diam(S)
$
as $n \to \infty $, but finding sequences $(a_n)_{n \in \N} $ and $(b_n)_{n \in \N}$ so that $a_n(b_n-M_n)$ has a non-degenerate limit distribution as $n \to \infty $ is a much more difficult problem, which has hitherto been solved only in a few special cases.
We deliberately discard the case $d=1$ in what follows since then
\begin{equation*}
M_n = \max_{1\le i \le n}Z_i - \min_{1\le i \le n}Z_i
\end{equation*}
is the well-studied sample range.
Results obtained so far mostly cover the case that $\P_{Z}$ is spherically symmetric, and they may roughly be classified according to whether $\P_{Z}$ has an unbounded or a bounded support. If $Z$ has a spherically symmetric normal distribution, \cite{Matthews1993} obtained a Gumbel limit distribution
for $M_n$, and \cite{Henze1996} generalized this result to the case that $Z$ has a spherically symmetric Kotz type distribution.
An even more general spherically symmetric setting with a Gumbel limit distribution has been studied by \cite{RaoJanson2015}.
\cite{HenzeLao2010} studied unbounded distributions $\P_Z$, for which the norm $|Z|$ and the directional part $Z/|Z|$ of $Z$ are independent and the right tail of the distribution of $|Z|$ decays like a power law. In this case, they showed a (non-Gumbel) limit distribution of $M_n$ that can be described in terms of a suitably defined Poisson point process.
Finally, \cite{Demichel2014} considered unbounded elliptical distributions of the form
$
Z = TAW,
$
where $T$ is a positive and unbounded random variable, $A$ is an invertible $(d \times d)$-dimensional matrix, and $W$ is uniformly distributed on the sphere $\mathcal{S}^{d-1} = \left\{ z \in \R^d: |z| = 1\right\}.$ In this case, the asymptotical behavior of $M_n$ depends on the right tail of the distribution function of $T$ and the multiplicity $k \in \left\{ 1,\ldots,d\right\}$ of the largest eigenvalue of $A.$ In that work, it was assumed that $T$ lies in the max-domain of attraction of the Gumbel law. If the matrix $A$ has a single largest eigenvalue, \cite{Demichel2014} derives a limit law for $M_n$ that can be represented in terms of two independent Poisson point processes on $\R^d$. On the other hand, if $A$ has a multiple largest eigenvalue and $T$ satisfies an additional technical assumption, $M_n$ has a Gumbel limit law. If $k=d$, the random vector $Z$ has a spherically symmetric distribution, and their result is the same as that stated by \cite{RaoJanson2015}.

If $\P_{Z}$ has a bounded support, \cite{Lao2010} and \cite{MayerMol2007} deduced a Weibull limit distribution for $M_n$  in a very general setting if the distribution of $Z$ is supported by the $d$-dimensional unit ball $\B^d$ for $d \ge 2$.
Furthermore, \cite{Lao2010} obtained limit laws for $M_n$ if $\P_{Z}$ is uniform or non-uniform in the unit square, uniform in regular polygons, or uniform in the $d$-dimensional unit cube, $d \ge 2$.
\cite{Appel2002} obtained a convolution of two independent Weibull distributions as limit law of $M_n$
if $Z$ has a uniform distribution in a planar set with
unique major axis and `sub-$\sqrt{x}$ decay' of its boundary at the endpoints. The latter property is \emph{not} fulfilled if $\P_Z$ is supported by a proper ellipse $E$.
In that case, \cite{Appel2002} were able to derive bounds for the limit law of $M_n$ if $Z$ has a uniform distribution.
The exact limit behavior of $M_n$ if $\P_{Z}$ is uniform in an ellipse has been an open problem for many years. Without giving a proof, \cite{RaoJanson2015} stated that $n^{2/3}(2-M_n)$ has a limit distribution (involving two independent Poisson processes) if $Z$ has a uniform distribution in a proper ellipse with major axis of length $2$. \cite{Schrempp2015} described this limit distribution in terms of two independent sequences of random variables, and  \cite{Schrempp2016} generalized the result of \cite{RaoJanson2015} to the case that $\P_Z$ is uniform or non-uniform over a $d$-dimensional ellipsoid.
Being more precise, the underlying set $E$ in \cite{Schrempp2016} is
$$
E = \left\{z \in \R^d:  \left( \frac{z_1}{a_1} \right)^2 + \left( \frac{z_2}{a_2} \right)^2 + \ldots + \left(
\frac{z_d}{a_d} \right)^2 \le 1\right\},
$$
where $d\ge2$ and $a_1 > a_2 \ge a_3 \ge \ldots \ge a_d > 0$. Since $a_1 > a_2$, the ellipsoid $E$ has a unique major axis of length $2a_1$ with `poles' $(a_1,0,\ldots,0)$ and $(-a_1,0,\ldots,0)$. If the distribution $\P_Z$ is supported by such a set $E$ and $\P_Z(E \cap O)>0$ for each neighborhood $O$ of each of the two poles, the unique major axis makes sure that the asymptotical behavior of $M_n$ is determined solely by the shape of $\P_Z$ close to these poles. \cite{Schrempp2016} investigated distributions $\P_{Z}$ with a Lebesgue density $f$ on $E$, so that $f$ is continuous and bounded away from $0$ near the poles. Hence, the uniform distribution on $E$ was a special case of that work.
It turned out that $2a_1 - M_n$ has to be scaled by the factor $n^{2/(d+1)}$  to obtain a non-degenerate limit distribution. In order to show this weak convergence, a related setting had been considered, in which the random points are the support of a specific series of Poisson point processes $\mathbf{Z}_n$ in $E$.  Writing $\diam(\mathbf{Z}_n)$ for the diameter of the support of $\mathbf{Z}_n$, it turned out that $n^{2/(d+1)}(2a_1-\diam(\mathbf{Z}_n ))$ has a limiting distribution involving two independent Poisson processes that live on a subset $P$ of $\R^d$, the shape of which is determined by $a_1,\ldots,a_d$.
By use of the so-called de-Poissonization technique, $n^{2/(d+1)}(2a_1-M_n)$ has the same limit distribution as $n$ tends to infinity.\\

From the proofs given in \cite{Schrempp2016}, it is quite obvious that only the values of the density at the poles and the curvature of the boundary $\partial E$ of $E$ at the poles determine the limiting distribution of $n^{2/(d+1)}(2a_1-M_n)$, but \emph{not} the fact that $E$ is an ellipsoid. The latter observation was the starting point for this work:
Our main result is a generalization of the result stated in \cite{Schrempp2016} to distributions that are supported by a $d$-dimensional set $E$, $d \ge 2$, with `unique diameter' of length $2a>0$ between the poles $(-a,0,\ldots,0)$ and $(a,0,\ldots,0)$ and a smooth boundary at the poles. The formal assumptions on $E$ are stated in
\autoref{sec_main_results}.
If the density $f$ of $Z$ on $E$ is continuous and bounded away from $0$ close to the poles,  $n^{2/(d+1)}(2a-\diam(\mathbf{Z}_n ))$ has a non-degenerate limiting distribution also in this setting. Again, this limit law involves two independent Poisson processes that live on potentially different subsets $P_\ell$ and $P_r$ of $\R^d$.
The shape of $P_\ell$ is only determined by the principal curvatures and the corresponding principal curvature directions of $\partial E$ at the left pole $(-a,0,\ldots,0)$. The same holds true for $P_r$ and the right pole $(a,0,\ldots,0)$.

The paper is organized as follows.
In \autoref{sec_fundamentals} we will fix our general notation, and
\autoref{sec_main_results} contains our assumptions and our main result, which is \autoref{thm_main_result}, the proof of  which will be given in \autoref{sec_proof_main_thm}.
\autoref{sec_generalizations_unique_diameter} contains several generalizations of the main result for underlying sets with a `unique diameter'. These include more general distributions $\P_Z$, a limit theorem for the joint convergence of the $k$ largest distances among $Z_1,\ldots,Z_n$ and $p$-norms and so-called `$p$-superellipsoids', where $1 \le p < \infty $.
\autoref{sec_generalizations_no_unique_diameter} deals with generalizations of our main result to settings where $E$ does not have a `unique diameter', and it concludes with a fundamental open problem concerning
Pearson Type II distributions that are supported by an ellipsoid with at least two but less than $d$ major half-axes.


\section{Fundamentals}
\label{sec_fundamentals}
Throughout, vectors are understood as column vectors, but if there is no danger of misunderstanding, we write them -- depending on the context -- either as row or as column vectors.
We use the abbreviation $\widetilde z := (z_2,\ldots,z_d)$ for a point $z = (z_1,\ldots,z_d) \in \R^d$. Given a function $s:\R^{d-1} \to \R, \widetilde z \mapsto s(\widetilde z)$, let $s_j(\widetilde z)$ denote the partial derivative of $s$ with respect to the component $z_j$ for $j \in \left\{ 2,\ldots,d\right\}$. Notice that, for instance, $s_2$ stands for the partial derivative of $s$ with respect to $z_2$, \emph{not} with respect to the second component of $\widetilde z$. The gradient $\big(s_2(\widetilde z),\ldots,s_d(\widetilde z)\big)$ of $s$ at the point $\widetilde z$ will be denoted by $\nabla s(\widetilde z)$.
Likewise $s_{ij}(\widetilde z)$ is the second-order partial derivative with respect to $z_i$ and $z_j$.
Without stressing the dependence on the dimension, we write $\mathbf{0}$ for the origin in $\R^{i}$ and $\mathbf{e}_j$ for the $j$-th unit vector in $\R^{i}$ for $ i,j \in \N := \left\{ 1,2,\ldots\right\}$ with $j \le i$.
The scalar product of $x,y \in \R^i$ will be denoted by $\langle x,y\rangle$, $i \in \N$.
For a subset $ A \subset \R^{d}$ and $c > 0$ we write $c \cdot A := \left\{ c \cdot z: z \in A\right\}$,
and we put $\R_{+}:= [0,\infty)$. Furthermore, $m_d$ stands for $d$-dimensional Lebesgue measure, and the $i$-dimensional identity matrix will be denoted by $\mathrm{I}_i$, $i \in \N$.
Each unspecified limit refers to $n \to \infty$, and for two real-valued sequences $(a_n)_{n \in \N}$ and $(b_n)_{n \in \N}$, where $b_n \ne 0$ for each $n \in \N$, we write $a_n \sim b_n$ if $a_n/b_n \to 1$.
For a density $g$, a measure $\mu$ on $\R^d$ and a Borel set $A \in \mathcal{B}^d $ we put $g\big|_A(z) := g(z)$ if $z \in A$ and $0$ otherwise, and write $\mu\big|_A(B) := \mu(A \cap B)$ if $B \in \mathcal{B}^d $.
Convergence in distribution and equality in distribution will be denoted by $\overset{\mathcal{D}}{\longrightarrow }$ and $\overset{\mathcal{D}}{= } $, respectively. The components of a random vector $Z_i$ are given by $Z_i = (Z_{i,1},\ldots,Z_{i,d})$ for $i \ge 1$.
Finally, we write $N \overset{\mathcal{D}}{= } \Po(\lambda) $ if the random variable $N$ has a Poisson distribution with parameter $\lambda >0 $.

Regarding point processes, we mainly adopt the notation of~\cite{Resnick1987}, Chapter 3. A point process $\xi$ on some space $D$, equipped with a $\sigma$-field $\mathcal{D} $, is a measurable map from some probability space $(\Omega,\mathcal{A},\P )$ into $\big(M_p(D),\mathcal{M}_p(D)\big) $, where $M_p(D)$ is the set of all point measures $\chi$ on $D$, equipped with the smallest $\sigma$-field $\mathcal{M}_p(D)$ rendering the evaluation maps $\chi \mapsto \chi(A)$ from $M_p(D) \to [0,\infty]$ measurable for all $A \in \mathcal{D}.$
We call the point process $\xi$ simple if
$
\P\big(\xi(\left\{ z\right\}) \in \left\{ 0,1\right\} \text{ for all } z \in D\big) = 1.
$
A Poisson process with intensity measure $\mu$ is a point process $\xi$ satisfying
\begin{equation}
\label{eq_def_Poisson_process}
  \P\big(\xi(A) = k \big ) =
  \begin{cases}
    e^{-\mu(A)}\frac{\mu(A)^k}{k!}, & \text{if } \mu(A) < \infty,\\
    0, &  \text{if } \mu(A) = \infty,
  \end{cases}
\end{equation}
for $A \in \mathcal{D} $ and $ k \in \N\cup \left\{ 0\right\}$.
Moreover, $\xi(A_1),\ldots,\xi(A_i)$ are independent for any choice of $ i \ge 2$ and mutually disjoint sets $A_1,\ldots,A_i\in \mathcal{D}$. We briefly write $\xi \overset{\mathcal{D}}{= }  \PRM(\mu)$. If $\xi$ is a Poisson process with intensity measure $\mu$, \eqref{eq_def_Poisson_process} means $\xi(A) \overset{\mathcal{D}}{= } \Po\big(\mu(A)\big)$ and hence $\E\big[\xi(A)\big] = \mu(A)$ for $A \in \mathcal{D}$.
According to Corollary 6.5 in~\cite{Last2017}, for each Poisson process $\xi$ on $D$ there is a sequence $\mathcal{X}_1,\mathcal{X}_2,\ldots  $ of random points in $D$ and a $\left\{ 0,1,\ldots,\infty \right\}$-valued random variable $N$ so that
$$
\xi = \sum_{i=1}^{N}\varepsilon_{\mathcal{X}_i }, \quad \text{almost surely.}
$$
Because of this property we use the notation $ \xi = \left\{ \mathcal{X}_i, i \ge 1\right\} $, whenever  $\xi$ is a simple Poisson process and $\xi(D) = \infty$ almost surely. This terminology is motivated by the notion of a point process as a random set of points.

We will use the bold letters $\mathbf{X},\mathbf{Y}  $ and $\mathbf{Z} $ to denote point processes, and the convention will be as follows: Point processes supported by the whole underlying set $E$ will get a name involving the letter $\mathbf{Z}$. In contrast, the letter $\mathbf{X} $ always stands for processes that live only on the left half $E \cap \left\{ z_1 \le 0\right\}$ of $E$ and $\mathbf{Y} $ for those that are supported by the right half $E \cap \left\{ z_1 \ge 0\right\}$ of $E$. This distinction will be very useful to shorten the notation.
If, for instance, $\mathbf{X} = \left\{ \mathcal{X}_i, i \ge 1 \right\}$ is a point process on $\R^d$, we write $\mathcal{X}_i  = (\mathcal{X}_{i,1},\ldots,\mathcal{X}_{i,d} )$ to denote the coordinates of $\mathcal{X}_{i}. $ We finally introduce a very special sequence of Poisson processes: If $Z_1,Z_2,\ldots$ are i.i.d. with common distribution $\P_Z$, and  $N_n$ is independent of this sequence and has a Poisson distribution with parameter $n$, then
$$
\mathbf{Z}_n :=   \sum_{i=1}^{N_n}\varepsilon_{Z_{i}}, \quad n \in \N,
$$
is a Poisson process in $\R^d$ with intensity measure $n\P_{Z}$, and we have
$$
\diam(\mathbf{Z}_n ) =  M_{N_n} = \max_{1 \le i,j \le N_n} \big|Z_i - Z_j\big|.
$$

\section{Conditions and main results}
\label{sec_main_results}
Our basic assumption on the shape and the orientation of the underlying set $E$ is that its finite diameter is attained by exactly one pair of points, both of which lie on the $z_1$-axis. Being more precise, we assume the following:
\vspace{2mm}
\begin{condition}
\label{cond_unique_diameter}
Let $E \subset \R^d$ be a closed subset with $0 < 2a = \diam(E) < \infty  $ and $(-a,\mathbf{0} ), (a,\mathbf{0}  )\in E$. Furthermore, we assume
\begin{equation}
\label{eq_cond_unique_diameter}
|x-y| < 2a \qquad \text{for each} \qquad (x,y)\in \big( E\backslash \left\{ (-a,\mathbf{0} ), (a,\mathbf{0} )\right\} \big) \times E.
\end{equation}
\end{condition}
\vspace{2mm}

Speaking of  a `unique diameter', we will always mean that the underlying set satisfies \autoref{cond_unique_diameter}.
The two points $(-a,\mathbf{0} ),(a,\mathbf{0} )\in E$ are henceforth called the `poles' of $E$. There is no loss of generality in assuming that the poles of $E$ are given by $(-a,\mathbf{0} )$ and $(a,\mathbf{0} )$.
For every set having a diameter of length $2a >0$ we can find a suitable coordinate system so that this assumption is satisfied.
Since we will consider distributions with $m_d$-densities supported by $E$, it will be no loss of generality either that we assume $E$ to be closed.
So, condition~\eqref{eq_cond_unique_diameter} guarantees that $M_n$ will be determined by two points lying close to $(-a,\mathbf{0} )$ and $(a,\mathbf{0} )$, respectively, at least for large $n$ and a suitable distribution $\P_Z$.
Our assumption on the shape of $E$ close to both poles is as follows:
\vspace{2mm}
\begin{condition}
\label{cond_shape_pole_caps}
There are constants $\delta_\ell,\delta_r \in (0,a]$, open neighborhoods $O_\ell,O_r\subset \R^{d-1}$ of $\mathbf{0} \in \R^{d-1} $ and twice continuously differentiable functions $s^\ell : O_\ell \to \R_{+}$, $s^r : O_r \to \R_{+}$, so that \begin{align}
  E_\ell :=\ &E \cap \left\{ z_1 < -a + \delta_\ell\right\}
  =\  \left\{  (z_1,\widetilde z) \in \R^d: -a + s^\ell(\widetilde z) \le z_1 < -a + \delta_{\ell} , \widetilde z\in O_\ell\right\}
  \label{eq_def_E_l}
  \intertext{and}
  E_r :=\ & E \cap \left\{a - \delta_r < z_1\right\}
  =\  \left\{ (z_1,\widetilde z) \in \R^d: a - \delta_r < z_1 \le  a - s^r(\widetilde z) , \widetilde z\in O_r\right\}.
  \label{eq_def_E_r}
\end{align}
\end{condition}

Since $(-a,\mathbf{0} ),(a,\mathbf{0} )\in E$, we have $s^\ell(\mathbf{0} ) = s^r(\mathbf{0} ) = 0$, and we write $H_i$ for the Hessian of $s^{i}$ at the point $\mathbf{0} $. In view of the unique diameter of $E$ between $(-a,\mathbf{0} )$ and $(a,\mathbf{0} )$, we know the following facts about $\nabla s^{i}(\mathbf{0} )$ and $H_i$, $i \in \left\{ \ell,r\right\}$:
\begin{lemma}
\label{lem_first_part_deriv_are_0_and_Hessian_pos_def}
For $i \in\left\{ \ell,r\right\}$ we have $\nabla s^i(\mathbf{0}) = \mathbf{0} $. Furthermore, the matrix $H_i$ is symmetric and positive definite, and all $d-1$ eigenvalues of $H_i$ are larger than $1/2a$.
\end{lemma}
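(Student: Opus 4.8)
The plan is to extract the three claims — vanishing gradient, positive definiteness of $H_i$, and the eigenvalue bound $>1/2a$ — from a single geometric fact: every point of $E$ near a pole lies within distance $2a$ of the opposite pole, with equality only at the pole itself. I carry out the argument for the left pole $i=\ell$; the right pole is symmetric (or follows by applying the left-pole argument to the reflected set $-E$).

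First, the gradient. Fix $\widetilde z \in O_\ell$ and consider the boundary point $x(\widetilde z) := (-a + s^\ell(\widetilde z),\widetilde z) \in E_\ell \subset E$. By Condition~\ref{cond_unique_diameter}, its distance to the right pole $(a,\mathbf{0})$ is at most $2a$, i.e.
\begin{equation*}
  \big(2a - s^\ell(\widetilde z)\big)^2 + |\widetilde z|^2 \le (2a)^2 ,
\end{equation*}
which rearranges to $s^\ell(\widetilde z)\big(4a - s^\ell(\widetilde z)\big) \ge |\widetilde z|^2$, hence $s^\ell(\widetilde z) \ge |\widetilde z|^2/(4a)$ for $\widetilde z$ in a small enough neighborhood of $\mathbf{0}$ (using $s^\ell(\mathbf{0})=0$ and continuity to keep $4a - s^\ell(\widetilde z)$ bounded above by, say, $4a$). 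Since $s^\ell(\widetilde z) \ge 0 = s^\ell(\mathbf{0})$ and $s^\ell$ is differentiable at $\mathbf{0}$, the point $\mathbf{0}$ is an interior local minimum of $s^\ell$, so $\nabla s^\ell(\mathbf{0}) = \mathbf{0}$.

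Next, the Hessian. Symmetry of $H_\ell$ is Schwarz's theorem, valid because $s^\ell$ is twice continuously differentiable. For positive definiteness and the eigenvalue bound, I use the second-order Taylor expansion at $\mathbf{0}$: since $s^\ell(\mathbf{0})=0$ and $\nabla s^\ell(\mathbf{0})=\mathbf{0}$,
\begin{equation*}
  s^\ell(\widetilde z) = \tfrac12\,\langle \widetilde z, H_\ell\, \widetilde z\rangle + o\big(|\widetilde z|^2\big) \qquad \text{as } \widetilde z \to \mathbf{0}.
\end{equation*}
Combining this with the inequality $s^\ell(\widetilde z)\big(4a - s^\ell(\widetilde z)\big) \ge |\widetilde z|^2$ derived above, and noting that $s^\ell(\widetilde z) \to 0$ so the bracket tends to $4a$, I get $s^\ell(\widetilde z) \ge |\widetilde z|^2/(4a) + o(|\widetilde z|^2)$. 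Feeding in the Taylor expansion yields
\begin{equation*}
  \tfrac12\,\langle \widetilde z, H_\ell\, \widetilde z\rangle \ge \frac{|\widetilde z|^2}{4a} + o\big(|\widetilde z|^2\big) .
\end{equation*}
Now specialize $\widetilde z = t v$ for a fixed unit vector $v$ and let $t \downarrow 0$: dividing by $t^2$ and passing to the limit gives $\tfrac12\langle v, H_\ell v\rangle \ge 1/(4a)$, i.e. $\langle v, H_\ell v\rangle \ge 1/(2a)$ for every unit vector $v$. Taking $v$ to be a unit eigenvector of the symmetric matrix $H_\ell$ shows that every eigenvalue is $\ge 1/(2a)$, in particular $H_\ell$ is positive definite.

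Finally, to upgrade $\ge 1/(2a)$ to the strict inequality $>1/(2a)$ claimed in the lemma, I invoke the \emph{uniqueness} clause of Condition~\ref{cond_unique_diameter}: for $\widetilde z \neq \mathbf{0}$ the point $x(\widetilde z)$ is distinct from $(-a,\mathbf{0})$, so $|x(\widetilde z) - (a,\mathbf{0})| < 2a$ strictly, giving $s^\ell(\widetilde z)\big(4a - s^\ell(\widetilde z)\big) > |\widetilde z|^2$. If some eigenvalue equalled exactly $1/(2a)$ with eigenvector $v$, then along $\widetilde z = tv$ the Taylor expansion would force $s^\ell(tv) = \tfrac{t^2}{4a} + o(t^2)$, and one checks $s^\ell(tv)\big(4a - s^\ell(tv)\big) = t^2 + o(t^2)$, which is not compatible with the strict inequality being bounded away — more carefully, I argue that a single eigendirection attaining the bound does not immediately contradict the strict inequality (since $o(t^2)$ could be positive), so the right way is: suppose $\lambda_{\min}(H_\ell) = 1/(2a)$ with unit eigenvector $v$; combine the strict inequality with the expansion to get $\frac{t^2}{4a} + o(t^2) = s^\ell(tv) > \frac{t^2}{4a - s^\ell(tv)} \cdot \frac{4a - s^\ell(tv)}{4a}\cdot\frac{4a}{4a-s^\ell(tv)}$ ... . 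I anticipate this final strictness step is the main obstacle: the quadratic inequality alone only yields $\ge$, and one needs to use the smoothness (the $C^2$ hypothesis) together with uniqueness to rule out equality — likely by a compactness argument over the unit sphere of eigendirections, or by observing that if equality held in a direction $v$ then a second-order/third-order refinement of the constraint $s^\ell(\widetilde z)(4a - s^\ell(\widetilde z)) > |\widetilde z|^2$ at points $\widetilde z = tv$ would be violated for small $t$. I would present that refinement carefully: writing $u = s^\ell(tv)$, the constraint reads $u > \frac{t^2}{4a - u}$; if $u = \frac{t^2}{4a} + o(t^2)$ then $\frac{t^2}{4a-u} = \frac{t^2}{4a} + \frac{t^4}{(4a)^2} + o(t^4)$, and the inequality $u > \frac{t^2}{4a-u}$ becomes a constraint on the $o(t^2)$ term, which is automatically satisfiable — so in fact equality of an eigenvalue is \emph{not} excluded by uniqueness alone along one line, and the correct statement must use that $s^\ell$ is $C^2$ so that the Hessian controls $s^\ell$ uniformly in all directions near $\mathbf{0}$; I therefore close the argument by contradiction with the uniform lower bound $s^\ell(\widetilde z) \ge |\widetilde z|^2/(4a)$ holding on a \emph{full neighborhood} while $\tfrac12\langle v, H_\ell v\rangle = 1/(4a)$ in one direction forces, via the uniform $o$-term of the $C^2$ expansion, a genuine contradiction with the strict inequality on a sequence of points.
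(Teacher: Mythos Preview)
Your approach is exactly the paper's: compare the left pole cap with the ball $B_{2a}\big((a,\mathbf{0})\big)$, which by Condition~\ref{cond_unique_diameter} contains $E$ and meets $\partial B_{2a}\big((a,\mathbf{0})\big)$ only at $(-a,\mathbf{0})$. The paper parametrises that sphere near the left pole as $z_1=t(\widetilde z):=a-\sqrt{4a^2-|\widetilde z|^2}$, computes its Hessian $H_t=\frac{1}{2a}\mathrm{I}_{d-1}$ at $\mathbf{0}$, and compares the two second-order Taylor expansions; your inequality $s^\ell(\widetilde z)\bigl(4a-s^\ell(\widetilde z)\bigr)\ge|\widetilde z|^2$ is just an algebraic rearrangement of the same containment. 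Both routes cleanly give $\nabla s^\ell(\mathbf{0})=\mathbf{0}$, symmetry of $H_\ell$, and eigenvalues $\ge 1/(2a)$.

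Your worry about strictness is well founded, and your closing paragraph does not resolve it: a strict inequality $\frac12\widetilde z^\top M\widetilde z+o(|\widetilde z|^2)>0$ for all $\widetilde z\neq\mathbf{0}$ only forces $M\ge 0$, not $M>0$. Concretely, $s^\ell(z_2)=\frac{z_2^2}{4a}+z_2^4$ satisfies the one-pole containment strictly yet has $H_\ell=\frac{1}{2a}$, so no amount of ``uniform $o(t^2)$'' reasoning along a single eigendirection can rule this out. The paper's proof at this exact spot simply asserts that the strict pointwise inequality ``shows \dots\ the matrix $H_\ell-H_t$ is positive definite'' and moves on, so you have not lost anything relative to the paper. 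The strict bound \emph{does} follow from Conditions~\ref{cond_unique_diameter} and~\ref{cond_shape_pole_caps}, but it needs \emph{both} poles: if some $\kappa_j^\ell=1/(2a)$ with unit eigenvector $v$, then pairing the left boundary point in direction $tv$ with a suitably chosen right boundary point yields distance exceeding $2a$ at second order---equivalently, the matrix $A(1)$ of Lemma~\ref{lem_A_1_has_to_be_pos_def} would fail to be positive semi-definite---contradicting the uniqueness clause in Condition~\ref{cond_unique_diameter}.
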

The proof of this lemma can be found in \autoref{sec_appendix}.
According to \autoref{lem_first_part_deriv_are_0_and_Hessian_pos_def}, the matrices $H_\ell$ and $H_r$ are orthogonally diagonalizable and all eigenvalues, denoted by $\kappa_2^i \le \ldots \le \kappa_{d}^i$, $i \in \left\{ \ell,r\right\}$, in ascending order, are real-valued and positive. The subscripts $2,\ldots,d$ instead of $1,\ldots,d-1$ are chosen deliberately. Because of the very close connection between these eigenvalues and the components $z_2,\ldots,z_d$ in our main theorem, this notation is much more intuitive for our purposes. See especially the end of this section for an illustration of the aforementioned connection.
For $i \in \left\{ \ell,r\right\}$ we choose an orthonormal basis $\left\{ \mathbf{u}_2^i,\ldots,\mathbf{u}_d^i  \right\}$ of $\R^{d-1}$, consisting of corresponding eigenvectors; namely $H_i \mathbf{u}_j^i =\kappa_j^i  \mathbf{u}_j^i $ for $j \in \left\{ 2,\ldots,d\right\}$. Putting $U_i := (\mathbf{u}_2^i\ |\ \ldots\ |\ \mathbf{u}_d^i  )$, we have $U_iU_i^\top = \mathrm{I}_{d-1}$ and $U_i^\top H_i U_i = \diag(\kappa_2^i,\ldots,\kappa_d^i) =: D_i.$ \\

It is quite obvious that \autoref{cond_unique_diameter} restricts the possible Hessians $H_\ell$ and $H_r$.
It would be desirable to find a one-to-one relation between the unique diameter of $E$ assumed in \autoref{cond_unique_diameter} on the one hand and all possible Hessians $H_\ell$ and $H_r$ on the other hand.
But describing this relation in its whole generality would be technically very involved. Fortunately, we can state a simple but still very general condition on the Hessians to guarantee that $E \cap \left\{ |z_1| > a - \delta\right\}$ has a unique diameter between $(-a,\mathbf{0} )$ and $(a,\mathbf{0} )$ for $\delta >0$ sufficiently small. Unless otherwise stated we will always study sets fulfilling the following condition:
\vspace{3mm}
\begin{condition}
\label{cond_A_eta_pos_semi_definite}
For some constant $\con \in (0,1)$, the $2(d-1) \times 2(d-1)$-dimensional matrix
$$
A(\con) :=
\begin{pmatrix}
  2a\con D_\ell - \mathrm{I}_{d-1} & U_\ell^\top U_r \\
  U_r^\top U_\ell & 2a\con D_r - \mathrm{I}_{d-1}
\end{pmatrix}
$$
is positive semi-definite.
\end{condition}
\vspace{3mm}

We will briefly write $A(\con) \ge 0$ to denote this property.
Notice that $A(\con_1) \ge 0 $ implies $A(\con_2) \ge 0$ for each $\con_2 > \con_1$ since $D_\ell$ and $D_r$ are diagonal matrices with positive entries on their main diagonals.
Due to the fact that $D_\ell,D_r,U_\ell$ and $U_r$ depend only on the curvature of $\partial E$ at the poles, \autoref{cond_A_eta_pos_semi_definite} is obviously not sufficient to ensure \eqref{eq_cond_unique_diameter} (figuring in \autoref{cond_unique_diameter}) for the whole set $E$. But \autoref{lem_suff_con_unique_diam} will show that \autoref{cond_A_eta_pos_semi_definite} guarantees that \eqref{eq_cond_unique_diameter} holds true for $E$ replaced with $E \cap \left\{ |z_1| > a - \delta\right\}$ and $\delta >0 $ sufficiently small. This assertion can be interpreted as `\autoref{cond_A_eta_pos_semi_definite} ensures the unique diameter of $E$ close to the poles'.
Focussing on sets satisfying \autoref{cond_A_eta_pos_semi_definite} will be no strong limitation in the following sense:
If $A(1)$ is \emph{not} positive semi-definite, then $E$ cannot have a unique diameter between the poles, see \autoref{lem_A_1_has_to_be_pos_def}. Hence, the only relevant case not covered by \autoref{cond_A_eta_pos_semi_definite} is given by
$$
A(1)\ge0, \quad \text{ but } \quad A(\con) \ngeq 0 \quad   \text{ for each } \quad  \con \in (0,1).
$$
At first sight, \autoref{cond_A_eta_pos_semi_definite} looks quite technical. A much more intuitive and sufficient, but \emph{not} necessary condition for \autoref{cond_A_eta_pos_semi_definite} to hold is
\begin{equation}
\label{eq_suff_cond_princ_curv}
  \frac{1}{\kappa_2^\ell}+ \frac{1}{\kappa_2^r} < 2a,
\end{equation}
see \autoref{lem_suff_cond_princ_curv}. We may thus check \autoref{cond_unique_diameter} (at least close to the poles) for many sets by merely looking at the smallest eigenvalues of $H_\ell$ and $H_r$.
Now that we have stated our conditions on the underlying set $E$, we can focus on distributions supported by $E$.
In this section we consider distributions $\P_{Z}$ with a Lebesgue density $f$ on $E$ satisfying the following property of continuity at the poles:
\vspace{2mm}
\begin{condition}
\label{cond_density_p_l_p_r}
Let $f: E \to \R_{+}$ with $\int_{E}f(z)\,\mathrm{d}z =1 $. We further assume that $f$ is continuous at the poles $(-a,\mathbf{0} )$, $(a,\mathbf{0} )$ with
$$
p_\ell := f(-a,\mathbf{0} ) > 0 \qquad \text{and} \qquad p_r := f(a,\mathbf{0} ) > 0.
$$
\end{condition}
\vspace{2mm}

Defining the `pole-caps of length $\delta$' via
\begin{equation}
\label{eq_def_E_l_delta_E_r_delta}
E_{\ell,\delta} := E_\ell \cap \left\{ - a \le z_1 \le -a + \delta\right\}
\quad \text{and} \quad
E_{r,\delta} := E_r \cap \left\{ a - \delta \le z_1 \le a\right\}
\end{equation}
for $0<\delta < \min\left\{ \delta_\ell,\delta_r\right\}$, the property of continuity assumed in \autoref{cond_density_p_l_p_r} can be rewritten as $f(z) = p_i \big(1 + o(1)\big)$, where $o(1)$ is uniformly on $E_{i,\delta}$ as $\delta \to 0$, $i \in \left\{ \ell,r\right\}$.
Now, we only need one more definition before we can formulate our main result.
Putting
\begin{equation}
\label{eq_def_P_H}
 P(H) := \left\{ (z_1,\widetilde z) \in \R^d:  \frac{1}{2} \widetilde z^\top  H \widetilde z \le z_1 \right\}
\end{equation}
for some $(d-1)\times (d-1)$-dimensional matrix $H$, the set $P(H_\ell)$ (resp. $P(H_r)$) describes the shape of $E$ near the left (resp. right) pole if we `look through a suitably distorted magnifying glass', see \autoref{lemma_transformation_density} for details.
The boundaries of $P(H_\ell)$ and $P(H_r)$ are elliptical paraboloids. Now we are prepared to state our main result.

\begin{theorem}
\label{thm_main_result}
If Conditions~\ref{cond_unique_diameter} to \ref{cond_density_p_l_p_r} hold, then
\begin{equation}
\label{eq_theorem_main_result}
n^{\frac{2}{d+1}}\big(2a - \mathrm{diam}(\mathbf{Z}_n )\big) \overset{\mathcal{D}}{\longrightarrow } \min_{i,j \ge 1} \left\{ \mathcal{X}_{i,1}  + \mathcal{Y}_{j,1} - \frac{1}{4a}\big|\widetilde  {\mathcal{X}}_{i}- \widetilde {\mathcal{Y}}_{j}\big|^2 \right\},
\end{equation}
where $\left\{ \mathcal{X}_i, i \ge 1\right\} \overset{\mathcal{D}}{= }  \PRM\big(p_\ell\cdot m_d\big|_{P(H_\ell)}\big) $ and $\left\{ \mathcal{Y}_j, j \ge 1\right\} \overset{\mathcal{D}}{= }  \PRM\big(p_r\cdot m_d\big|_{P(H_r)}\big) $ are independent Poisson processes. The same holds true if we replace $ \mathrm{diam}(\mathbf{Z}_n )$ with $M_n$.
\vspace{1mm}
\end{theorem}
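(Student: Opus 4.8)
The plan is to prove the statement first for the Poissonised diameter $\diam(\mathbf{Z}_n)$ and then to transfer it to $M_n$ by de-Poissonisation. \emph{Localisation.} Fix $\delta\in(0,\min\{\delta_\ell,\delta_r\})$; by \autoref{cond_unique_diameter} and compactness of $E$ there is $\rho>0$ with $\diam(E_{\ell,\delta}),\diam(E_{r,\delta})\le 2a-\rho$ and $|x-y|\le 2a-\rho$ whenever $x\in E\setminus(E_{\ell,\delta}\cup E_{r,\delta})$ and $y\in E$. Since $\P_Z$ gives positive mass to every neighbourhood of each pole, $\mathbf{Z}_n$ has, with probability tending to one, points arbitrarily close to both poles, whence $\P\big(\diam(\mathbf{Z}_n)>2a-\rho\big)\to1$ and, on that event, the maximising pair of $\mathbf{Z}_n$ has one point in $E_{\ell,\delta}$ and one in $E_{r,\delta}$; these sets being disjoint, they carry independent Poisson restrictions $\mathbf{X}_n:=\mathbf{Z}_n|_{E_{\ell,\delta}}$ and $\mathbf{Y}_n:=\mathbf{Z}_n|_{E_{r,\delta}}$. \emph{Rescaling.} Apply $(z_1,\widetilde z)\mapsto\big(n^{2/(d+1)}(z_1+a),\,n^{1/(d+1)}\widetilde z\big)$ near the left pole and its mirror image near the right pole. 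Since $s^i(\mathbf{0})=0$ and $\nabla s^i(\mathbf{0})=\mathbf{0}$ (\autoref{lem_first_part_deriv_are_0_and_Hessian_pos_def}), the images of $E_{\ell,\delta},E_{r,\delta}$ converge to $P(H_\ell),P(H_r)$; the Jacobian of the scaling equals $n^{-1}$ while $f\to p_i$ at the poles (\autoref{cond_density_p_l_p_r}), so the rescaled intensity measures converge vaguely to $p_\ell m_d|_{P(H_\ell)}$ and $p_r m_d|_{P(H_r)}$. This is \autoref{lemma_transformation_density}, and since $\mathbf{X}_n,\mathbf{Y}_n$ are Poisson it gives joint convergence in distribution of the rescaled processes to independent $\PRM(p_\ell m_d|_{P(H_\ell)})$ and $\PRM(p_r m_d|_{P(H_r)})$.

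Next I rewrite the diameter as a functional. If a point near the left pole has rescaled coordinates $(x_1,\widetilde x)$, i.e.\ equals $(-a+n^{-2/(d+1)}x_1,\,n^{-1/(d+1)}\widetilde x)$, and a point near the right pole equals $(a-n^{-2/(d+1)}y_1,\,n^{-1/(d+1)}\widetilde y)$, then expanding the Euclidean norm yields
$$n^{2/(d+1)}\big(2a-|Z_i-Z_j|\big)=x_1+y_1-\tfrac{1}{4a}|\widetilde x-\widetilde y|^2+O\big(n^{-2/(d+1)}\big)$$
uniformly over pairs in a fixed compact region, so (using the localisation above) $n^{2/(d+1)}(2a-\diam(\mathbf{Z}_n))$ equals, up to $o(1)$, the minimum of $x_{i,1}+y_{j,1}-\tfrac{1}{4a}|\widetilde x_i-\widetilde y_j|^2$ over atoms of the rescaled $\mathbf{X}_n,\mathbf{Y}_n$. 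It therefore suffices that the functional $\Phi(\chi_1,\chi_2):=\min\{x_{i,1}+y_{j,1}-\tfrac{1}{4a}|\widetilde x_i-\widetilde y_j|^2:\ x_i\in\chi_1,\ y_j\in\chi_2\}$ be a.s.\ continuous at the limiting pair of configurations in the vague topology, and here \autoref{cond_A_eta_pos_semi_definite} is decisive. Choosing $\con_0\in(\con,1)$, for which $A(\con_0)\ge0$ as well, the substitution $v=U_\ell^\top\widetilde x,\ w=U_r^\top\widetilde y$ in the quadratic form of $A(\con_0)$ -- using $U_iU_i^\top=\mathrm{I}_{d-1}$ and $U_iD_iU_i^\top=H_i$ -- turns $A(\con_0)\ge0$ into $2a\con_0(\widetilde x^\top H_\ell\widetilde x+\widetilde y^\top H_r\widetilde y)\ge|\widetilde x-\widetilde y|^2$, which together with $x_1\ge\tfrac12\widetilde x^\top H_\ell\widetilde x$ and $y_1\ge\tfrac12\widetilde y^\top H_r\widetilde y$ on the paraboloids gives
$$x_1+y_1-\tfrac{1}{4a}|\widetilde x-\widetilde y|^2\ \ge\ (1-\con_0)(x_1+y_1)\ \ge\ 0 .$$
As $P(H_i)\cap\{x_1\le K\}$ is bounded for every $K>0$, this bound forces only finitely many pairs of limiting points to compete for the minimum (so it is a.s.\ attained and finite) and lets one truncate $\Phi$ to a large compact set on which it is continuous; the continuous mapping theorem applies, and letting the truncation radius grow removes the remaining pairs, the same bound holding for $\mathbf{X}_n,\mathbf{Y}_n$. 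This proves~\eqref{eq_theorem_main_result} with $\diam(\mathbf{Z}_n)$.

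For $M_n$ I de-Poissonise. Put $m^\pm=n\pm\lceil n^{2/3}\rceil$ and couple Poisson processes $\mathbf{Z}_{m^-},\mathbf{Z}_{m^+}$ to the binomial process $\sum_{i=1}^n\varepsilon_{Z_i}$ via $\mathbf{Z}_{m^\pm}=\sum_{i=1}^{N^\pm}\varepsilon_{Z_i}$, where $N^\pm$ are Poisson with parameters $m^\pm$, coupled so that $N^-\le N^+$, and independent of $(Z_i)$. Then $\P(N^-\le n\le N^+)\to1$, and on this event $\diam(\mathbf{Z}_{m^-})\le M_n\le\diam(\mathbf{Z}_{m^+})$ by monotonicity of $m\mapsto M_m$. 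Since $m^\pm/n\to1$, the first part of the proof yields that $n^{2/(d+1)}\big(2a-\diam(\mathbf{Z}_{m^\pm})\big)$ converges in distribution to the right-hand side of~\eqref{eq_theorem_main_result}, and comparing distribution functions of the sandwiched quantity at continuity points shows that $n^{2/(d+1)}(2a-M_n)$ has the same limit.

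The technical heart is the second paragraph: deriving the coercivity bound from \autoref{cond_A_eta_pos_semi_definite} and using it to justify the truncation of $\Phi$ simultaneously for the prelimiting and the limiting processes, and checking that the $o(|\widetilde z|^2)$ error in the second-order approximation of $s^i$ near $\mathbf{0}$ and the $o(1)$ error in $f\equiv p_i$ near the poles are negligible uniformly over the slowly growing region of relevant pairs. The localisation, the norm expansion, and the de-Poissonisation are comparatively routine.
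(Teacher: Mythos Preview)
Your outline follows the same overall route as the paper: localise to the two pole caps, rescale by $T_n$, show Poisson convergence of the restricted processes to independent PRMs on the osculating paraboloids, expand the Euclidean distance to second order, and pass to the limit in the minimum functional via a continuous-mapping argument; the de-Poissonisation (your explicit sandwiching versus the paper's appeal to Theorem~3.2 of Mayer--Molchanov) is a routine variant. The coercivity inequality you extract from $A(\con_0)\ge0$ is exactly the bound the paper uses to prove \autoref{lemma_continuity_of_G_hat}.

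There is, however, a real gap in the second paragraph. Your additive error $O(n^{-2/(d+1)})$ is uniform only on a \emph{fixed} compact, so to conclude that the pre-limit minimum is approximated by $\Phi$ you must first know it is attained in that compact with high probability. For this you invoke the coercivity bound, but that bound relies on $x_1\ge\tfrac12\widetilde x^\top H_\ell\widetilde x$, which holds on $P(H_\ell)$ but \emph{not} on the rescaled pole caps: in general $P_1(H_i)\not\subset P(H_i)$, since the Taylor remainder $R_i(\widetilde z)$ of $s^i$ can be negative. The paper handles this by enlarging the state spaces to $\widehat\con\cdot P(H_i)$ with $\widehat\con=(1+\con^{-1})/2>1$ (\autoref{rem_inclusion_limiting_set}) and proving the inclusion $T_n\big(P_1(H_i)\cap\{z_1\le\delta^*\}\big)\subset\widehat\con\cdot P(H_i)$ in \autoref{lem_shifted_poles_in_xi_hat_P_H_i}; one then needs $\con\widehat\con<1$ to keep the coercivity constant positive. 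Your slack $\con_0>\con$ could be used to the same effect, but this step is not carried out.

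More to the point, the paper avoids the whole truncation-to-compacts manoeuvre by proving a \emph{multiplicative} rather than additive error bound (\autoref{lemma_R_is_little_o_of_G_tilde}): $2a-|x-y|=\widetilde G(x,y)\big(1+o(1)\big)$ uniformly on $E_\delta$ as $\delta\to0$. Since $n^{2\nu}\widetilde G(Z_i,Z_j)$ equals $G(x,y)$ \emph{exactly} in rescaled coordinates, this immediately gives $(1-\varepsilon)\min G\le n^{2\nu}(2a-M_n^\delta)\le(1+\varepsilon)\min G$ over \emph{all} of $E_\delta$, with no need to argue separately that the pre-limit minimum sits in a bounded window. The proof of that uniform multiplicative bound is precisely the ``technical heart'' you flag in your last paragraph but do not supply, and it is where \autoref{cond_A_eta_pos_semi_definite} together with the enlarged-state-space inclusion are actually used. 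Once that lemma is in place your argument would be complete.
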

\vspace{1mm}
\begin{proof}
See \autoref{sec_proof_main_thm}.
\qed
\end{proof}

A special case of this result is given if we assume that $E$ is a proper ellipsoid. The following corollary illustrates that \autoref{thm_main_result} is a generalization of the main result in~\cite{Schrempp2016}:
\begin{corollary}
\label{cor_Ellipsoid}
Let $a_1 > a_2 \ge a_3 \ge \ldots \ge a_d > 0$, and put
$$
E := \left\{z \in \R^d:   \sum_{j=1}^{d}\left(\frac{z_j}{a_j} \right)^2  \le 1\right\}.
$$
The values $a_1,\ldots,a_d$ are called the `half-axes' of the ellipsoid $E$. Obviously, this set
has a unique diameter of length $2a_1$ between the points $(-a_1,\mathbf{0} )$ and $(a_1,\mathbf{0} )$; i.e. \autoref{cond_unique_diameter} holds true with $a = a_1$.
Putting $\delta_\ell := \delta_r := a_1$,
$$
O_\ell := O_r := \left\{\widetilde z \in \R^{d-1}:  \sum_{j=2}^{d}\left(\frac{z_j}{a_j} \right)^2 < 1\right\}
\qquad
\text{and}
\qquad
s^\ell(\widetilde z) := s^r(\widetilde z) := a_1 - a_1\sqrt{1 - \sum_{j=2}^{d}\left(\frac{z_j}{a_j} \right)^2 },
$$
\autoref{cond_shape_pole_caps} is fulfilled, too.
Some easy calculations show that the Hessians $H_\ell$ and $H_r$ of $s^{\ell}$ and $s^{r}$ at $\mathbf{0}$ are given by
\begin{align*}
H_\ell = H_r = \diag\left( \frac{a_1}{a_2^2}\ ,\ \ldots\ ,\ \frac{a_1}{a_d^2}\right).
\end{align*}
This means that the eigenvalues of both $H_\ell$ and $H_r$ are $\kappa_j^i = a_1/a_j^2$.
Since $ a_2 < a_1$, we have
$$
\frac{1}{\kappa_2^\ell}+ \frac{1}{\kappa_2^r} = \frac{a_2^2}{a_1} + \frac{a_2^2}{a_1} = 2\frac{a_2^2}{a_1} = 2a_1 \left( \frac{a_2}{a_1}\right)^2 < 2a_1 = 2a.
$$
Hence, inequality \eqref{eq_suff_cond_princ_curv} holds true and thus \autoref{cond_A_eta_pos_semi_definite} is fulfilled. With
\begin{align*}
 P(H_\ell) = P(H_r)
 &= \left\{ z \in \R^d: \frac{1}{2} \widetilde z^\top H_{\ell}\widetilde z \le  z_1\right\}
= \left\{ z \in \R^d: \frac{1}{2} \sum_{j=2}^{d} \frac{a_1}{a_j^2}\cdot z_j^2 \le  z_1\right\}
 = \left\{ z \in \R^d: \sum_{j=2}^{d} \left( \frac{z_j}{a_j} \right)^2  \le  \frac{2z_1}{a_1}\right\},
\end{align*}
we can apply \autoref{thm_main_result} for distributions in $E$ satisfying \autoref{cond_density_p_l_p_r}, in accordance with Theorem 2.1 in \cite{Schrempp2016}.
\end{corollary}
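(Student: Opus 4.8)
The corollary is a verification that the ellipsoid $E$, equipped with the data $\delta_\ell,\delta_r,O_\ell,O_r,s^\ell,s^r$ proposed in the statement, satisfies Conditions~\ref{cond_unique_diameter}, \ref{cond_shape_pole_caps} and \ref{cond_A_eta_pos_semi_definite}, after which \autoref{thm_main_result} applies to any density $f$ on $E$ obeying \autoref{cond_density_p_l_p_r}; the explicit form of $P(H_\ell),P(H_r)$ is then a one-line substitution into \eqref{eq_def_P_H}. So the plan is to check the three conditions in turn and then read off \eqref{eq_theorem_main_result}.

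I would first establish \autoref{cond_unique_diameter} with $a=a_1$. Clearly $(\pm a_1,\mathbf{0})\in E$ and $|(-a_1,\mathbf{0})-(a_1,\mathbf{0})|=2a_1$. For arbitrary $z\in E$ the defining inequality gives $\sum_{j\ge2}(z_j/a_j)^2\le 1-(z_1/a_1)^2$, and since $a_j\le a_2$ for $j\ge2$ this yields $|\widetilde z|^2=\sum_{j\ge2}z_j^2\le a_2^2\big(1-(z_1/a_1)^2\big)$, hence $|z|^2=z_1^2+|\widetilde z|^2\le a_2^2+z_1^2\big(1-a_2^2/a_1^2\big)$. Because $a_2<a_1$ the coefficient $1-a_2^2/a_1^2$ is positive, so the right-hand side is increasing in $z_1^2\in[0,a_1^2]$ and is at most $a_1^2$; moreover equality $|z|=a_1$ forces $z_1^2=a_1^2$ and then $|\widetilde z|^2\le a_2^2(1-(z_1/a_1)^2)=0$, i.e. $z=(\pm a_1,\mathbf{0})$. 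Consequently $|x-y|\le|x|+|y|\le 2a_1$ for all $x,y\in E$, and equality forces $|x|=|y|=a_1$ — so both are poles — and $x,y$ antiparallel, leaving only $\{x,y\}=\{(-a_1,\mathbf{0}),(a_1,\mathbf{0})\}$; this is \eqref{eq_cond_unique_diameter}. In my view this equality analysis is the only step needing a genuine argument rather than a direct computation.

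Next I would verify \autoref{cond_shape_pole_caps} and compute the Hessians. Solving the ellipsoid inequality for $z_1$ shows that $z\in E$ with $z_1<0$ is equivalent to $\widetilde z\in O_\ell$ (strict, since $z_1\neq0$ makes $\sum_{j\ge2}(z_j/a_j)^2\le1-(z_1/a_1)^2<1$) together with $-a_1+s^\ell(\widetilde z)\le z_1<0=-a_1+\delta_\ell$, which is exactly \eqref{eq_def_E_l}; the mirror computation gives \eqref{eq_def_E_r}. On $O_\ell$ one has $\sum_{j\ge2}(z_j/a_j)^2<1$, so $s^\ell=a_1\big(1-\sqrt{1-g}\big)$ with $g(\widetilde z)=\sum_{j\ge2}(z_j/a_j)^2$ is $C^\infty$, in particular twice continuously differentiable, and likewise $s^r$. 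Differentiating $a_1(1-\sqrt{1-g})$ and using $g(\mathbf{0})=0$, $\nabla g(\mathbf{0})=\mathbf{0}$, $\nabla^2g(\mathbf{0})=\diag(2/a_2^2,\dots,2/a_d^2)$, gives $\nabla s^\ell(\mathbf{0})=\mathbf{0}$ and $H_\ell=\tfrac{a_1}{2}\nabla^2g(\mathbf{0})=\diag(a_1/a_2^2,\dots,a_1/a_d^2)$, and identically $H_r=H_\ell$; these are symmetric positive definite with eigenvalues $\kappa_j^i=a_1/a_j^2$, consistent with \autoref{lem_first_part_deriv_are_0_and_Hessian_pos_def}.

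Finally I would check \autoref{cond_A_eta_pos_semi_definite}. Since $a_2\ge a_3\ge\dots\ge a_d$, the smallest eigenvalue of each Hessian is $\kappa_2^i=a_1/a_2^2$, so $\tfrac1{\kappa_2^\ell}+\tfrac1{\kappa_2^r}=2a_2^2/a_1=2a_1(a_2/a_1)^2<2a_1=2a$ because $a_2<a_1$; thus \eqref{eq_suff_cond_princ_curv} holds and \autoref{cond_A_eta_pos_semi_definite} follows from \autoref{lem_suff_cond_princ_curv}. (As $H_\ell=H_r$ are already diagonal, one may alternatively take $U_\ell=U_r=\mathrm{I}_{d-1}$ and check directly that $A(\con)\ge0$ for every $\con\in[a_2^2/a_1^2,1)$.) With $H_\ell=H_r=\diag(a_1/a_2^2,\dots,a_1/a_d^2)$, definition \eqref{eq_def_P_H} gives $P(H_\ell)=P(H_r)=\big\{z\in\R^d:\tfrac12\sum_{j=2}^d(a_1/a_j^2)z_j^2\le z_1\big\}=\big\{z\in\R^d:\sum_{j=2}^d(z_j/a_j)^2\le 2z_1/a_1\big\}$. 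Now \autoref{thm_main_result} yields \eqref{eq_theorem_main_result} for every density on $E$ satisfying \autoref{cond_density_p_l_p_r}, the statement for $M_n$ being included because it is part of \autoref{thm_main_result}, and specialising to the uniform density reproduces Theorem~2.1 of \cite{Schrempp2016}.
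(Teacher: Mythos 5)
Your proposal is correct and follows the same route as the paper: the corollary there is essentially a verification in which the claims the paper dismisses as ``obvious'' or by ``easy calculations'' are exactly the steps you spell out — the $|z|\le a_1$ bound with equality only at the poles (giving \autoref{cond_unique_diameter} via the triangle inequality), the algebraic identification of the pole caps for \autoref{cond_shape_pole_caps}, the chain-rule computation of $H_\ell=H_r$, the check of \eqref{eq_suff_cond_princ_curv} through \autoref{lem_suff_cond_princ_curv}, and the substitution into \eqref{eq_def_P_H}. Nothing differs in substance; you simply make explicit the justifications the paper leaves implicit.
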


\begin{corollary}
\label{cor_ellipse_uniform}
If $Z$ has a uniform distribution in the ellipsoid $E$ given in \autoref{cor_Ellipsoid}, \autoref{cond_density_p_l_p_r} holds true with
$$
p_\ell := p_r := \frac{1}{m_d(E)} = \left( \frac{\pi^{\frac{d}{2}}}{\Gamma\left( \frac{d}{2}+1\right)}\prod_{i=1}^{d}a_i  \right)^{-1} > 0.
$$
Hence, \autoref{thm_main_result} is applicable. In the special case $d=2$ and $a_1 = 1$ we have $a_2 < 1$,
$p_\ell = p_r = 1/(\pi a_2)$,
$$
P:=P(H_\ell) = P(H_r)= \left\{ z \in \R^2:  \left( \frac{z_2}{a_2} \right)^2 \le  2z_1\right\},
$$
 and it follows that
\begin{equation}
\label{eq_Limit_Dist_Ellipse}
n^{2/3}(2 - M_n ) \overset{\mathcal{D}}{\longrightarrow } \min_{i,j \ge 1} \left\{ \mathcal{X}_{i,1}  + \mathcal{Y}_{j,1} - \frac{1}{4} (\mathcal{X}_{i,2}-\mathcal{Y}_{j,2})^2 \right\},
\end{equation}
with two independent Poisson processes $\mathbf{X} = \left\{ \mathcal{X}_i, i \ge 1\right\} \overset{\mathcal{D}}{= } \PRM\big(p_\ell\cdot m_2\big|_P\big) $ and $\mathbf{Y} = \left\{ \mathcal{Y}_j, j \ge 1\right\} \overset{\mathcal{D}}{= } \PRM\big(p_r\cdot m_2\big|_P\big) $.
\end{corollary}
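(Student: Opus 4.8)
The plan is to reduce the statement entirely to \autoref{cor_Ellipsoid} together with \autoref{thm_main_result}; the only genuinely new ingredients are the verification of \autoref{cond_density_p_l_p_r} for the uniform law and the explicit evaluation of the limiting random variable in the planar case, both of which are short computations.

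First I would record the volume of the ellipsoid. Since $E$ is the image of the unit ball $\B^d$ under the linear map $\diag(a_1,\ldots,a_d)$, the change-of-variables formula gives $m_d(E) = \big(\prod_{i=1}^{d}a_i\big)\,m_d(\B^d) = \big(\prod_{i=1}^{d}a_i\big)\,\pi^{d/2}/\Gamma(d/2+1)$, which is the value claimed for $1/p_\ell = 1/p_r$. The uniform distribution on $E$ has the constant density $f \equiv 1/m_d(E)$ on $E$, which is in particular continuous at the two poles $(-a_1,\mathbf{0})$ and $(a_1,\mathbf{0})$ with $p_\ell = p_r = 1/m_d(E) > 0$; hence \autoref{cond_density_p_l_p_r} holds. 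Because \autoref{cor_Ellipsoid} has already checked Conditions~\ref{cond_unique_diameter} to \ref{cond_A_eta_pos_semi_definite} for this $E$ and has identified $P(H_\ell) = P(H_r)$, \autoref{thm_main_result} applies verbatim, which yields the general assertion (including the version with $M_n$ in place of $\diam(\mathbf{Z}_n)$).

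For the planar special case $d=2$, $a_1 = 1$, I would simply substitute the relevant constants. Then $a_2 < a_1 = 1$, and $m_2(E) = \pi a_1 a_2 = \pi a_2$, so $p_\ell = p_r = 1/(\pi a_2)$. Specializing the formula for $P(H_\ell)$ from \autoref{cor_Ellipsoid} to $d=2$, $a_1 = 1$ gives $P = \{z \in \R^2 : (z_2/a_2)^2 \le 2z_1\}$. Finally, plugging $d=2$ (so that the scaling exponent $2/(d+1)$ becomes $2/3$), $a = a_1 = 1$ (so that $2a = 2$ and $1/(4a) = 1/4$), and $\widetilde{\mathcal{X}}_i = \mathcal{X}_{i,2}$, $\widetilde{\mathcal{Y}}_j = \mathcal{Y}_{j,2}$ into \eqref{eq_theorem_main_result} turns it into \eqref{eq_Limit_Dist_Ellipse}, with the two independent Poisson processes $\mathbf{X}$ and $\mathbf{Y}$ having intensity measures $p_\ell\cdot m_2\big|_P$ and $p_r\cdot m_2\big|_P$, respectively.

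There is essentially no obstacle here: all the analytic work is contained in \autoref{thm_main_result} and in the verification of the geometric conditions already carried out in \autoref{cor_Ellipsoid}. The only points that require (minimal) care are getting the ellipsoid-volume normalization right through the linear change of variables and correctly carrying out the numerical substitutions $2/(d+1)\mapsto 2/3$, $2a\mapsto 2$, $1/(4a)\mapsto 1/4$, and $\widetilde z \mapsto z_2$ when passing to $d=2$.
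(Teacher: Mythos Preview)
Your proposal is correct and follows exactly the intended route: the corollary has no separate proof in the paper precisely because it is an immediate specialization of \autoref{cor_Ellipsoid} and \autoref{thm_main_result}, with the only additional input being the value of $m_d(E)$ for the uniform density and the straightforward substitutions in the case $d=2$, $a_1=1$.
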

To illustrate the speed of convergence in \autoref{cor_ellipse_uniform}, we present the result of a simulation study. To this end, define $G(x,y) := x_1 + y_1 - (x_2 - y_2)^2/4$. In the proof of \autoref{lemma_continuity_of_G_hat} one can see that $G(x,y) \ge c(x_1+y_1)$, $(x,y) \in P(H_\ell) \times P(H_r)$, for some fixed $c \in (0,1)$. Therefore, the probability that a point $\mathcal{X}_i $ with a `large' first component $\mathcal{X}_{i,1} $ determines the minimum above is `small' (we omit details). The same holds for $\mathcal{Y}_j$. We can thus approximate the limiting distribution above by taking independent Poisson processes with intensity measures $p_\ell\cdot m_2\big|_{\widetilde P}$ and $p_r\cdot m_2\big|_{\widetilde P}$ where $\widetilde P := P(H_\ell) \cap \left\{ z \in \R^2: z_1 \le b\right\}$ for some fixed $b >0$.
The larger the minor half-axis $a_2$ is
(i.e. the more $E$ becomes `circlelike'), the larger $b$ has to be chosen in order to have a good approximation of the distributional limit in
 \eqref{eq_Limit_Dist_Ellipse} (we omit details). See \autoref{fig_plot_ellipse_simulation} for an illustration of the sets $E$ (left) and $P$ (right) and \autoref{fig_cdf_plot_ellipse_uniform} for the result of a simulation. Notice the different scalings between the left-~and the right-hand image in \autoref{fig_plot_ellipse_simulation}.
\begin{figure}[ht]
	\centering
    \includegraphics[trim = 80 570 110 100, clip ]
    {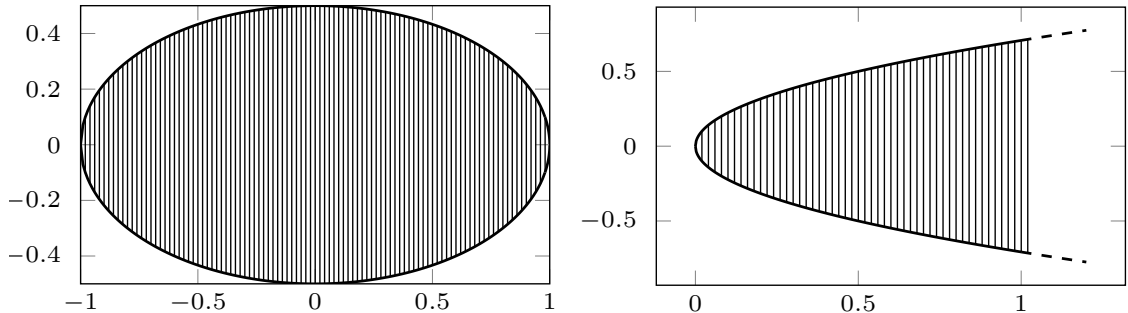}
	\caption{The sets $E$ (left) and $P$ (right)  in the setting of \autoref{cor_ellipse_uniform} for $d=2$ with $a_1=1, a_2=1/2$.}
	\label{fig_plot_ellipse_simulation}
\end{figure}

\begin{figure}[ht]
	\centering
    \includegraphics[trim = 120 530 100 100, clip ]
    {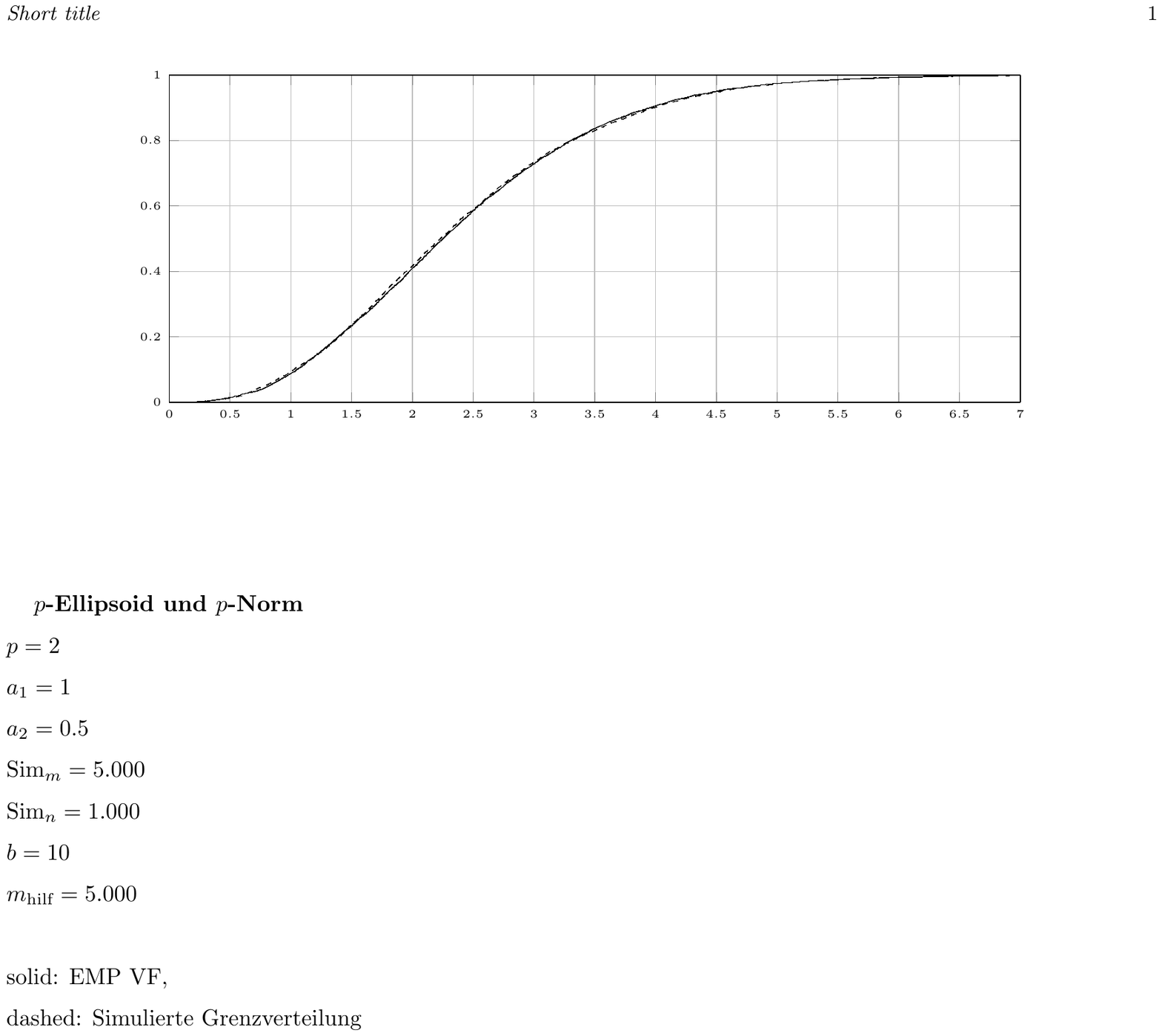}
	\caption{Empirical distribution function of $n^{2/3}(2-M_n)$ in the setting of \autoref{cor_ellipse_uniform} for $d=2$ with $a_1=1, a_2=1/2$, $n=1000$ (solid, 5000 replications). The limit distribution is approximated as described after \autoref{cor_ellipse_uniform} with $b = 10$ (dashed, 5000 replications).
}
	\label{fig_cdf_plot_ellipse_uniform}
\end{figure}

\begin{remark}
\label{rem_interpretation_as_hypersurfaces}
The `outer boundaries' of $E_\ell$ and $E_r$, denoted by
$$
M_\ell := \left\{(z_1,\widetilde z) \in \R^d: z_1 = -a+s^\ell(\widetilde z) , \widetilde z \in O_\ell\right\}
\qquad
\text{and}
\qquad
 M_r := \left\{(z_1,\widetilde z) \in \R^d: z_1 = a-s^r(\widetilde z) , \widetilde z \in O_r\right\},
$$
can be interpreted as images of the two hypersurfaces
$$
\mathbf{s}^\ell:\ O_\ell \to \R^d,\ \mathbf{s}^\ell(\widetilde z):=\big(-a + s^\ell(\widetilde z)\ ,\ \widetilde z \ \big)
\qquad
\text{and}
\qquad
\mathbf{s}^r:\ O_r \to \R^d,\ \mathbf{s}^r(\widetilde z):=\big(a -s^r(\widetilde z)\ ,\ \widetilde z \ \big).
$$
For $ i \in \left\{ \ell,r\right\}$ the first partial derivatives of $\mathbf{s}^i $ are given by
$$
\mathbf{s}_2^i(\widetilde z) =
    \Big(
    \ s_2^i(\widetilde z) \ , \    1  \ , \   0   \ , \  \ldots  \ , \  0 \
    \Big)
, \quad \ldots\quad , \
\mathbf{s}_d^i(\widetilde z) =
   \Big(
    \ s_d^i(\widetilde z) \ , \    0  \ , \  \ldots\ , \   0  \ , \  1 \
    \Big) .
$$
These $d-1$ vectors are linearly independent for each $\widetilde z \in O_i$, which means that the hypersurfaces $\mathbf{s}^\ell $ and $\mathbf{s}^r $  are regular, see Definition 3.1.2 in~\cite{Csikos2014}. From \autoref{lem_first_part_deriv_are_0_and_Hessian_pos_def} we further know $\mathbf{s}_j^i(\mathbf{0} ) = \mathbf{e}_j$ for $i \in \left\{ \ell,r\right\}$ and each $j \in \left\{ 2,\ldots,d\right\}$. Hence, the two unit normal vectors of the hypersurface $\mathbf{s}^{i} $ at the pole $\mathbf{s}^{i}(\mathbf{0} ) $ are given by $\pm \mathbf{e}_1 $.
Looking at Appendix A.2.2 in \cite{Schrempp2017} -- especially its ending -- we know (because of $\nabla s^i(\mathbf{0} )= \mathbf{0}$) that the eigenvalues $\kappa_j^i$ of the Hessian $H_i$ are exactly the principal curvatures of the hypersurface $\mathbf{s}^i $ at the pole $\mathbf{s}^i (\mathbf{0} )$ with respect to the unit normal vector $\mathbf{e}_1$ if $i = \ell$ and $-\mathbf{e}_1$ if $i = r$, respectively.
We can further conclude that
\begin{equation*}
\label{eq_Connection_v_and_u}
 \mathbf{v}_j^i :=
 \begin{pmatrix}
   0 \\
   \mathbf{ u}_j^i
 \end{pmatrix} \in \R^d
\end{equation*}
are the corresponding principal curvature directions.
Using the notation introduced after \autoref{lem_first_part_deriv_are_0_and_Hessian_pos_def} and some easy transformations show
\begin{align*}
  P(H_i) &= \left\{ z_1 \mathbf{e}_1 + \sum_{j=2}^{d}z_j \mathbf{v}_j^i
  \in \R^d:  \frac{1}{2}  \sum_{j=2}^{d}\kappa_j^i z_j^2 \le z_1\right\}, \label{eq_represenatation_P_Hl_by_princ_dir_1}
\end{align*}
see \cite[p. 30]{Schrempp2017} for more details.
This representation is sometimes called the `normal representation of the osculating paraboloid $P(H_i)$', and it justifies the notation of the principal curvatures $\kappa_j^i$ with indices $2,\ldots,d$ instead of $1,\ldots,d-1$. If we have $\mathbf{v}_{j}^i = \mathbf{e}_j  $ for each $ j \in \left\{ 2,\ldots,d\right\}$, we especially get
\begin{equation*}
\label{eq_represenatation_P_Hl_by_princ_dir_2}
P(H_i)= \left\{ z  \in \R^d:  \frac{1}{2}  \sum_{j=2}^{d}\kappa_j^i z_j^2 \le z_1\right\}.
\end{equation*}
\autoref{cor_Ellipsoid} is a special case of this situation.

\end{remark}

\section{Proof of \autoref{thm_main_result} }
\label{sec_proof_main_thm}
The proof of \autoref{thm_main_result} is divided into three subsections. The first one is mainly devoted to the study of some geometric properties of the set $E$ close to the poles. In \autoref{subsec_proof_main_thoerem_conv_point_processes} we will deal with the convergence of Poisson random measures, which will be crucial for the main part of the proof of \autoref{thm_main_result}, given in \autoref{subsubsec_main_part_proof_main_thm}.

\subsection{Geometric considerations}
\label{subsec_proof_main_thm_geom_considerations}
First of all, note that \autoref{cond_A_eta_pos_semi_definite} holds true if, and only if,
  \begin{equation}
  \label{eq_cond_1_unique_diam}
    0\le 2a\con\left( \alpha^\top D_\ell\alpha + \beta^\top D_r \beta\right)+2\alpha^\top U_\ell^\top U_r\beta-|\alpha|^2 - |\beta|^2, \qquad \text{for all $\alpha,\beta \in \R^{d-1}$},
  \end{equation}
and since $\kappa_2^i$ and $\kappa_d^i$ are the smallest and the largest eigenvalues of $H_i$, the so-called min--max theorem by Courant--Fischer yields
\begin{equation}
\label{eq_Courant_Fischer}
  \kappa_2^{i}| \widetilde z|^2 \le \widetilde z^\top  H_i \widetilde z \le \kappa_d^{i} | \widetilde z|^2
\end{equation}
for each $\widetilde z \in \R^{d-1}$ and $i \in \left\{ \ell,r\right\}$.
In view of \autoref{lem_first_part_deriv_are_0_and_Hessian_pos_def} it is clear that the second-order Taylor series expansions of $s^{i}$ at the point $\mathbf{0} $ is
$$
s^i(\widetilde z) =  \frac{1}{2}\widetilde z^\top  H_i  \widetilde  z + R_i (\widetilde z),
$$
where $R_i (\widetilde z) = o\big(|\widetilde z|^2 \big)$ and $i \in \left\{ \ell,r\right\}$. From \eqref{eq_def_E_l} and \eqref{eq_def_E_r} we obtain the representations
\begin{align}
  E_\ell
  &= \left\{  (z_1,\widetilde z) \in \R^d: -a + \frac{1}{2}\widetilde z^\top  H_\ell \widetilde  z + R_\ell (\widetilde z) \le z_1 < -a + \delta_{\ell} , \widetilde z\in O_\ell\right\} \label{eq_representation_E_l_Taylor}
  \intertext{and}
  E_r
  &= \left\{ (z_1,\widetilde z) \in \R^d: a - \delta_r < z_1 \le  a - \frac{1}{2}\widetilde z^\top  H_r \widetilde  z - R_r (\widetilde z) , \widetilde z\in O_r\right\},
  \label{eq_representation_E_r_Taylor}
\end{align}
which will be widely used throughout this work.
Now we need some additional definitions.
We shift the set $E_\ell$ to the right by $a\cdot\mathbf{e}_1 $ along the $z_1$-axis and call this set $P_1(H_\ell)$. The set $E_r$ will be translated by $-a\cdot\mathbf{e}_1 $ along the $z_1$-axis to the left, and it will then be reflected at the plane $\left\{ z_1 = 0\right\}$. We call the resulting set $P_1(H_r)$. Looking at \eqref{eq_representation_E_l_Taylor} and~\eqref{eq_representation_E_r_Taylor}, we have
\begin{equation}
\label{eq_def_P_1_H_i}
P_1(H_i) = \left\{  (z_1,\widetilde z) \in \R^d: \frac{1}{2}\widetilde z^\top   H_i \widetilde  z + R_i (\widetilde z) \le z_1 < \delta_i , \widetilde z\in O_i\right\}
\end{equation}
for $i \in \left\{ \ell,r\right\}$. The reason underlying this construction will be seen later in~\eqref{eq_reason_first_power_bigger}.
In addition to $P_1(H_i)$,
we introduce the constant
\begin{equation}
\label{eq_def_eta_hat}
\widehat \con := \frac{1+ \con^{-1}}{2},
\end{equation}
based on the constant $\con \in (0,1)$ from \autoref{cond_A_eta_pos_semi_definite}. The subsequent remark will point out two very important properties of $\widehat \con$, that will be essential for the proofs to follow:

\begin{remark}
\label{rem_properties_con_hat}
 Since $\con \in (0,1)$, we have $\widehat \con > 1$, and it follows that  $P(H_i) \varsubsetneq \widehat \con \cdot P(H_i)$ for $i \in \left\{ \ell,r\right\}$. Without this technical expansion of the limiting sets $P(H_i) $, several proofs would become much more complicated. The second important property is that $\widehat \con$ is not `too large' in the sense that
 $$
 1 - \con\widehat \con = 1 - \con\frac{1+ \con^{-1}}{2} = 1 - \frac{\con + 1}{2} = \frac{1 - \con}{2} > 0.
 $$
 This inequality will be crucial for the proofs of \autoref{lemma_R_is_little_o_of_G_tilde} and \autoref{lemma_continuity_of_G_hat}.
\end{remark}

As stated in \autoref{rem_properties_con_hat}, we will need the set $\widehat \con \cdot P(H_i)$ for $i \in \left\{ \ell,r\right\}$. For later use, we give a more convenient representation of these sets:
\begin{remark}
\label{rem_representation_widehat_xi_P_H_i}
For $i \in \left\{ \ell,r\right\}$ we obtain from \eqref{eq_def_P_H}
\begin{align*}
  \widehat \con \cdot P(H_i)
  &= \left\{ \widehat \con  \cdot z \in \R^d: z \in P(H_i)\right\} \\
  &= \left\{z\in \R^d: \widehat \con ^{-1} z \in P(H_i)\right\}\\
  &= \left\{z\in \R^d: \frac{1}{2}\left( \widehat \con ^{-1} \widetilde z \right)^\top  H_i \left( \widehat \con ^{-1} \widetilde z \right)  \le \widehat \con ^{-1}z_1\right\} \\
  &= \left\{z\in \R^d: \frac{1}{2}\widetilde z^\top  H_i\widetilde z  \le \widehat \con z_1\right\}.
\end{align*}
\end{remark}
In the following, we have to consider simultaneously points $x$, that are lying close to the left pole, and points $y$, lying close to the right one. For this purpose, we use the definitions of the pole-caps $E_{\ell,\delta}$ and $E_{r,\delta}$ given in \eqref{eq_def_E_l_delta_E_r_delta} and put $E_{\delta} := E_{\ell,\delta} \times E_{r,\delta}$ to yield
\begin{equation}
\label{eq_Representation_E_delta}
  E_{\delta} = \big\{ (x,y) \in E_\ell \times E_r: -a \le x_1 \le -a + \delta, a - \delta \le y_1 \le a\big\}.
\end{equation}

The next lemma shows the reason for introducing the sets $\widehat \con \cdot P(H_i)$, $i \in \left\{ \ell,r\right\}$. The inclusion stated there will be crucial for the proof of the subsequent \autoref{lemma_R_is_little_o_of_G_tilde} and for the main part of the proof of \autoref{thm_main_result} itself.
\begin{lemma}
\label{lem_shifted_poles_in_xi_hat_P_H_i}
There is some constant $\delta^* \in \big(0, \min\left\{ \delta_\ell,\delta_r\right\}\big]$, so that
the inclusion
\begin{equation}
\label{eq_shifted_poles_in_xi_hat_P_H_i_proof_help_0}
\big( P_1(H_\ell) \cap \left\{ z_1 \le \delta\right\}\big) \times
\big( P_1(H_r) \cap \left\{ z_1 \le \delta\right\}\big)
\subset \widehat \con \cdot P(H_\ell) \times \widehat \con \cdot P(H_r)
\end{equation}
holds true for each $\delta \in (0,\delta^{*}]$. In other words, we have
\begin{align}
\frac{1}{2}\widetilde x^\top  H_\ell \widetilde x \le  \widehat \con (a+x_1) \qquad \text{and}\qquad
\frac{1}{2}\widetilde y^\top  H_r \widetilde y \le  \widehat \con (a-y_1)
\label{eq_shifted_poles_in_xi_hat_P_H_i_proof_help_1}
\end{align}
for all $(x,y) \in E_{\delta^{*}}$.
\end{lemma}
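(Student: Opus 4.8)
The plan is to reduce the two-sided inclusion to a pair of one-sided statements about the pole-caps, each of which becomes a statement about the Taylor remainder $R_i$. By the definition of $P_1(H_\ell)$ in~\eqref{eq_def_P_1_H_i} and the representation in \autoref{rem_representation_widehat_xi_P_H_i}, the inclusion~\eqref{eq_shifted_poles_in_xi_hat_P_H_i_proof_help_0} is equivalent to the two inequalities~\eqref{eq_shifted_poles_in_xi_hat_P_H_i_proof_help_1}; indeed, a point $(z_1,\widetilde z)\in P_1(H_\ell)$ with $z_1\le\delta$ satisfies $\frac12\widetilde z^\top H_\ell\widetilde z + R_\ell(\widetilde z)\le z_1$, and we must show this forces $\frac12\widetilde z^\top H_\ell\widetilde z\le\widehat\con z_1$, i.e. (after the shift $z_1 = a + x_1$) the left inequality in~\eqref{eq_shifted_poles_in_xi_hat_P_H_i_proof_help_1}; the right one is the symmetric statement for $P_1(H_r)$.

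First I would fix the pole, say the left one, and work on $P_1(H_\ell)\cap\{z_1\le\delta\}$. On this set $z_1 \ge \frac12\widetilde z^\top H_\ell\widetilde z + R_\ell(\widetilde z) \ge \kappa_2^\ell|\widetilde z|^2 + R_\ell(\widetilde z)$ by the Courant--Fischer bound~\eqref{eq_Courant_Fischer}. Since $R_\ell(\widetilde z) = o(|\widetilde z|^2)$ as $\widetilde z\to\mathbf 0$, there is $\rho_\ell>0$ so that $|R_\ell(\widetilde z)| \le \tfrac{\kappa_2^\ell}{2}|\widetilde z|^2$ for $|\widetilde z|\le\rho_\ell$, whence on that ball $z_1 \ge \tfrac{\kappa_2^\ell}{2}|\widetilde z|^2$, i.e. $|\widetilde z|^2 \le \tfrac{2}{\kappa_2^\ell}z_1$, so $|\widetilde z|\to 0$ as $z_1\to 0$. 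Consequently, by choosing $\delta$ small enough we can guarantee $|\widetilde z|\le\rho_\ell$ whenever $z_1\le\delta$, and moreover, since $\widehat\con>1$ (see \autoref{rem_properties_con_hat}) and $R_\ell(\widetilde z)=o(|\widetilde z|^2)$, we can further shrink $\delta$ so that $R_\ell(\widetilde z) \ge -(\widehat\con - 1)\cdot\tfrac12\widetilde z^\top H_\ell\widetilde z$ on the relevant region (using $\tfrac12\widetilde z^\top H_\ell\widetilde z \ge \tfrac{\kappa_2^\ell}{2}|\widetilde z|^2$ to absorb the $o(|\widetilde z|^2)$ term). Then
$$
\widehat\con z_1 \ge \widehat\con\Big(\tfrac12\widetilde z^\top H_\ell\widetilde z + R_\ell(\widetilde z)\Big) \ge \widehat\con\cdot\tfrac12\widetilde z^\top H_\ell\widetilde z - \widehat\con(\widehat\con-1)\cdot\tfrac12\widetilde z^\top H_\ell\widetilde z,
$$
and one checks that $\widehat\con - \widehat\con(\widehat\con-1) \ge 1$ fails in general, so the cleanest route is instead: pick $\delta$ so small that $R_\ell(\widetilde z) \ge -(1 - \widehat\con^{-1})\cdot\tfrac12\widetilde z^\top H_\ell\widetilde z$ on $\{z_1\le\delta\}$, which gives $z_1 \ge \widehat\con^{-1}\cdot\tfrac12\widetilde z^\top H_\ell\widetilde z$, i.e. exactly $\tfrac12\widetilde z^\top H_\ell\widetilde z \le \widehat\con z_1$. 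This is legitimate because $1 - \widehat\con^{-1}\in(0,1)$ is a fixed positive constant and $R_\ell(\widetilde z)/(\tfrac12\widetilde z^\top H_\ell\widetilde z)\to 0$ as $\widetilde z\to\mathbf 0$ (the denominator dominates $\tfrac{\kappa_2^\ell}{2}|\widetilde z|^2$), combined with $|\widetilde z|\le\rho_\ell \to 0$ being forced by $z_1\le\delta\to 0$ as shown above.

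Running the same argument verbatim at the right pole (with $R_r$, $\kappa_2^r$, $H_r$, and $z_1 = a - y_1$) yields a threshold $\delta_r^*$, and setting $\delta^* := \min\{\delta_\ell^*, \delta_r^*, \delta_\ell, \delta_r\}$ makes both inequalities in~\eqref{eq_shifted_poles_in_xi_hat_P_H_i_proof_help_1} hold on $E_{\delta^*}$; note that for $(x,y)\in E_{\delta^*}$ we have $(a+x_1,\widetilde x)\in P_1(H_\ell)\cap\{z_1\le\delta^*\}$ and $(a-y_1,\widetilde y)\in P_1(H_r)\cap\{z_1\le\delta^*\}$ by~\eqref{eq_representation_E_l_Taylor}, \eqref{eq_representation_E_r_Taylor} and~\eqref{eq_Representation_E_delta}, which is precisely the first formulation~\eqref{eq_shifted_poles_in_xi_hat_P_H_i_proof_help_0}. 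The only genuinely delicate point — and the one I would be most careful about — is making the chain of "shrink $\delta$" steps logically clean: one must first use Courant--Fischer to convert a bound on $z_1$ into a bound on $|\widetilde z|$ (so that "$z_1$ small" really does imply "$\widetilde z$ near $\mathbf 0$", where the $o(\cdot)$ estimate on $R_\ell$ is valid), and only then invoke the $o(|\widetilde z|^2)$ property against the fixed slack $1 - \widehat\con^{-1}$; conflating these, or trying to use the $o(\cdot)$ bound before establishing $|\widetilde z|$ is small, would be circular. Everything else is a routine quadratic-form estimate.
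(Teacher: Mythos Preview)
Your approach is essentially the same as the paper's: the paper also fixes one pole, uses the defining inequality $\tfrac12\widetilde x^\top H_\ell\widetilde x + R_\ell(\widetilde x)\le a+x_1$, and absorbs the remainder via $|R_\ell(\widetilde x)|\le \tfrac{\varepsilon}{2}\widetilde x^\top H_\ell\widetilde x$ with $\varepsilon=\frac{\con^{-1}-1}{\con^{-1}+1}$, which is exactly your $1-\widehat\con^{-1}$, yielding $\tfrac{1-\varepsilon}{2}\widetilde x^\top H_\ell\widetilde x\le a+x_1$ and hence the claim since $\frac{1}{1-\varepsilon}=\widehat\con$. Two small points: your Courant--Fischer line should read $\tfrac12\widetilde z^\top H_\ell\widetilde z\ge \tfrac{\kappa_2^\ell}{2}|\widetilde z|^2$ (a harmless factor of $2$), and the paper sidesteps the circularity you flag by simply observing that $|\widetilde x|\to 0$ uniformly on $E_{\ell,\delta}$ as $\delta\to 0$ (compactness of the pole-caps), rather than bootstrapping through the Taylor bound.
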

\begin{proof}
Observe \autoref{rem_representation_widehat_xi_P_H_i} and the construction of $P_1(H_\ell)$ and $P_1(H_r)$ at the beginning of this section for checking the equivalence between \eqref{eq_shifted_poles_in_xi_hat_P_H_i_proof_help_0} and \eqref{eq_shifted_poles_in_xi_hat_P_H_i_proof_help_1}.
Without loss of generality we only show the first inequality of~\eqref{eq_shifted_poles_in_xi_hat_P_H_i_proof_help_1} for $(x,y) \in E_{\delta}$ and $\delta > 0$ sufficiently small.
If $\delta < \delta_\ell$, it follows from~\eqref{eq_representation_E_l_Taylor} and the definition of $E_\delta$ that
$$
x \in   \left\{  (z_1,\widetilde z) \in \R^d: -a + \frac{1}{2}\widetilde z^\top   H_\ell \widetilde  z + R_\ell (\widetilde z) \le z_1 \le -a + \delta , \widetilde z\in O_l\right\},
$$
whence
\begin{equation}
\label{eq_shifted_poles_in_xi_hat_P_H_i_proof_help_2}
\frac{1}{2}\widetilde x^\top   H_\ell \widetilde  x + R_\ell (\widetilde x) \le a +  x_1 \le \delta.
\end{equation}
As $\delta \to 0$ we get $|\widetilde x| \to 0$ on $E_{\delta}$, and
because of $R_\ell (\widetilde x) = o\left( |\widetilde x|^2\right)$ the relation $R_\ell (\widetilde x) = o\left( \widetilde x^\top  H_\ell \widetilde x\right)$ holds true, too.
Putting $\varepsilon := \frac{\con^{-1}-1}{\con^{-1}+1}>0$, we obtain for sufficiently small $\delta >0$
$$
\big|R_\ell (\widetilde x)\big| \le \frac{\varepsilon}{2}\widetilde x^\top   H_\ell \widetilde  x
$$
for every $(x,y) \in E_{\delta}$. Combining this inequality with \eqref{eq_shifted_poles_in_xi_hat_P_H_i_proof_help_2}
shows that
$$
\frac{1-\varepsilon}{2}\widetilde x^\top   H_\ell \widetilde  x  \le a + x_1
$$
and hence, by the definition of $\widehat \con$ given in \eqref{eq_def_eta_hat},
\begin{align*}
\frac{1}{2}\widetilde x^\top   H_\ell \widetilde  x
&\le \frac{1}{1-\varepsilon}(a + x_1)
= \widehat \con (a+x_1).
\end{align*}
Choosing $\delta^{*}$ in such a way that both inequalities figuring in \eqref{eq_shifted_poles_in_xi_hat_P_H_i_proof_help_1} hold true for each $(x,y) \in E_{\delta^{*}}$ finishes the proof.
\qed
\end{proof}
In the following, we will, without loss of generality, only investigate $E_{\delta}$ for $\delta \in (0,\delta^{*}]$ to ensure the validity of \eqref{eq_shifted_poles_in_xi_hat_P_H_i_proof_help_1}.\\

In the next step we examine the behavior of $|x-y|$ for $x$ close to the left pole of $E$ and $y$ close to the right one. For this purpose, we consider $\R^{2d}$ to describe the simultaneous convergence of $x$ to the left pole of $E$ and $y$ to the right pole. Some straightforward calculations show that
the second-order Taylor polynomial of $(x,y) \mapsto |x-y|$ at the point $\mathbf{a} := (-a,\mathbf{0} , a,\mathbf{0} ) \in \R^{2d}$ is given by
$
 -x_1 + y_1 + \frac{1}{4a}|\widetilde x - \widetilde y|^2.
$
As $(x,y )\to \mathbf{a} = (-a,\mathbf{0} , a,\mathbf{0} )$, we obtain
\begin{equation}
\label{eq_taylor_eukl_dist_at_poles}
|x-y| = - x_1 + y_1 +\frac{1}{4a}|\widetilde x- \widetilde y|^2 + R(x,y),
\end{equation}
where $R(x,y) = o\big( |(x,y) - \mathbf{a} |^2\big)$,
uniformly on the ball of radius $r$ and center $\mathbf{a} $ as $r \to 0$. This uniform convergence holds especially on
$E_{\delta}$  (given in \eqref{eq_Representation_E_delta})
as $\delta \to 0$.
Putting
\begin{equation*}
  \widetilde G(x,y) := (a + x_1) + (a - y_1) - \frac{1}{4a}|\widetilde x- \widetilde y|^2,
\end{equation*}
we infer
\begin{equation}
\label{eq_2a_minus_norm_is_G_tilde_minus_R}
2a - |x-y| = \widetilde G(x,y) - R(x,y).
\end{equation}

\begin{lemma}
\label{lemma_R_is_little_o_of_G_tilde}
We have $R(x,y) = o\big( \widetilde  G(x,y)\big)$, uniformly on $E_{\delta}$ as $\delta \to 0$.
\end{lemma}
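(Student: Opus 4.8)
The plan is to show that $|R(x,y)|/\widetilde G(x,y)$ tends to $0$ uniformly over $E_\delta$ as $\delta \to 0$. The numerator is easy: from \eqref{eq_taylor_eukl_dist_at_poles} we have $R(x,y) = o\big(|(x,y)-\mathbf a|^2\big)$ uniformly on $E_\delta$, and since $|(x,y)-\mathbf a|^2 = (a+x_1)^2 + |\widetilde x|^2 + (a-y_1)^2 + |\widetilde y|^2$, it suffices to bound this quantity by a constant multiple of $\widetilde G(x,y)$ for $(x,y) \in E_\delta$ with $\delta$ small. So the whole lemma reduces to establishing a lower bound of the form
\begin{equation*}
\widetilde G(x,y) \ \ge\ c\big(|(x,y)-\mathbf a|^2\big)\qquad\text{for all }(x,y)\in E_{\delta},
\end{equation*}
for some constant $c>0$ and $\delta$ sufficiently small; combined with $R(x,y) = o\big(|(x,y)-\mathbf a|^2\big)$ this immediately gives $R(x,y) = o\big(\widetilde G(x,y)\big)$ uniformly.

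To get that lower bound, first I would control the quadratic term $\frac{1}{4a}|\widetilde x - \widetilde y|^2 \le \frac{1}{2a}\big(|\widetilde x|^2 + |\widetilde y|^2\big)$, so that
\begin{equation*}
\widetilde G(x,y)\ \ge\ (a+x_1) + (a-y_1) - \frac{1}{2a}\big(|\widetilde x|^2 + |\widetilde y|^2\big).
\end{equation*}
Now on $E_{\delta}$ (with $\delta\le\delta^*$) \autoref{lem_shifted_poles_in_xi_hat_P_H_i} and the Courant--Fischer bound \eqref{eq_Courant_Fischer} give $\kappa_2^\ell|\widetilde x|^2 \le \widetilde x^\top H_\ell \widetilde x \le 2\widehat\con(a+x_1)$, hence $|\widetilde x|^2 \le \frac{2\widehat\con}{\kappa_2^\ell}(a+x_1)$, and similarly $|\widetilde y|^2 \le \frac{2\widehat\con}{\kappa_2^r}(a-y_1)$. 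Since by \autoref{lem_first_part_deriv_are_0_and_Hessian_pos_def} the eigenvalues satisfy $\kappa_2^\ell, \kappa_2^r > 1/(2a)$, we would instead use the sharper route: from $\frac12\widetilde x^\top H_\ell\widetilde x + R_\ell(\widetilde x) \le a+x_1$ (inequality \eqref{eq_shifted_poles_in_xi_hat_P_H_i_proof_help_2}) together with $|R_\ell(\widetilde x)| \le \frac{\varepsilon}{2}\widetilde x^\top H_\ell\widetilde x$ for small $\delta$, we get $\frac{1-\varepsilon}{2}\kappa_2^\ell|\widetilde x|^2 \le a+x_1$. The point of \autoref{cond_A_eta_pos_semi_definite} (via \eqref{eq_cond_1_unique_diam}) and the definition $\widehat\con = \frac{1+\con^{-1}}{2}$ is precisely that $1-\con\widehat\con = \frac{1-\con}{2}>0$ (see \autoref{rem_properties_con_hat}), which is the slack that makes the bound $\frac{1}{2a}\big(|\widetilde x|^2+|\widetilde y|^2\big)$ strictly dominated by $(a+x_1)+(a-y_1)$. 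Carrying the constants through, one obtains $\widetilde G(x,y) \ge c_1\big((a+x_1)+(a-y_1)\big)$ for some $c_1\in(0,1)$ and $\delta$ small enough.

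Finally I would close the gap by also bounding $(a+x_1)^2 \le \delta(a+x_1)$ and $(a-y_1)^2\le\delta(a-y_1)$ on $E_\delta$, and $|\widetilde x|^2 + |\widetilde y|^2 \le \text{const}\cdot\big((a+x_1)+(a-y_1)\big)$ as above, so that $|(x,y)-\mathbf a|^2 \le c_2\big((a+x_1)+(a-y_1)\big) \le (c_2/c_1)\,\widetilde G(x,y)$ for $\delta$ small. Then $|R(x,y)| = o\big(|(x,y)-\mathbf a|^2\big) = o\big(\widetilde G(x,y)\big)$ uniformly on $E_\delta$ as $\delta\to 0$, which is the claim. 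The main obstacle — or rather the one place where genuine care is needed — is making the chain of inequalities \emph{uniform}: every "$o$" and every "for $\delta$ small enough" must be controlled simultaneously over all $(x,y)\in E_\delta$, which is exactly why \autoref{cond_A_eta_pos_semi_definite} and the strict positivity $\frac{1-\con}{2}>0$ are invoked rather than just the bare fact that $\widetilde G(x,y)>0$ pointwise; a pointwise argument would not survive the supremum.
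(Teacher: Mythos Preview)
Your overall architecture matches the paper's: factor
\[
\frac{R(x,y)}{\widetilde G(x,y)} \;=\; \frac{R(x,y)}{|(x,y)-\mathbf a|^2}\cdot\frac{|(x,y)-\mathbf a|^2}{\widetilde G(x,y)},
\]
use the Taylor remainder to kill the first factor, and show the second is bounded on $E_\delta$ by bounding $|(x,y)-\mathbf a|^2$ above and $\widetilde G(x,y)$ below by constant multiples of $(a+x_1)+(a-y_1)$. The upper bound on the numerator is fine and is exactly what the paper does.

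The gap is in your lower bound for $\widetilde G$. You start from the crude inequality $|\widetilde x-\widetilde y|^2\le 2(|\widetilde x|^2+|\widetilde y|^2)$ and then try to finish with $|\widetilde x|^2\le \frac{2\widehat\con}{\kappa_2^\ell}(a+x_1)$, $|\widetilde y|^2\le \frac{2\widehat\con}{\kappa_2^r}(a-y_1)$. Carrying those constants through gives
\[
\widetilde G(x,y)\ \ge\ \Big(1-\tfrac{\widehat\con}{a\kappa_2^\ell}\Big)(a+x_1)+\Big(1-\tfrac{\widehat\con}{a\kappa_2^r}\Big)(a-y_1),
\]
and for this to be a positive multiple of $(a+x_1)+(a-y_1)$ you would need $a\kappa_2^i>\widehat\con$ for each $i$. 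Neither \autoref{lem_first_part_deriv_are_0_and_Hessian_pos_def} (which only gives $a\kappa_2^i>\tfrac12$) nor \autoref{cond_A_eta_pos_semi_definite} guarantees that. The identity $1-\con\widehat\con=\tfrac{1-\con}{2}$ that you invoke is \emph{not} the slack for this inequality; it belongs to a different chain.

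What the paper actually does, and what you are missing, is to use \eqref{eq_cond_1_unique_diam} \emph{before} discarding the cross term. Writing $\widetilde x=U_\ell\alpha$, $\widetilde y=U_r\beta$, one has $|\widetilde x-\widetilde y|^2=|\alpha|^2+|\beta|^2-2\alpha^\top U_\ell^\top U_r\beta$, and \eqref{eq_cond_1_unique_diam} bounds this directly by $2a\con(\alpha^\top D_\ell\alpha+\beta^\top D_r\beta)=2a\con(\widetilde x^\top H_\ell\widetilde x+\widetilde y^\top H_r\widetilde y)$. Only then does \autoref{lem_shifted_poles_in_xi_hat_P_H_i} give $\frac{1}{4a}|\widetilde x-\widetilde y|^2\le\con\widehat\con\big((a+x_1)+(a-y_1)\big)$, and now $1-\con\widehat\con=\frac{1-\con}{2}>0$ is exactly the right slack. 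Your crude bound $|\widetilde x-\widetilde y|^2\le 2(|\widetilde x|^2+|\widetilde y|^2)$ throws away the cross term at the outset and cannot be rescued by \autoref{cond_A_eta_pos_semi_definite} afterwards; that condition is a statement about the full quadratic form in $(\alpha,\beta)$, not about the diagonal blocks separately.
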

\begin{proof}
Notice that
\begin{equation}
\label{eq_proof_lemma_uniformly}
  \frac{R(x,y)}{\widetilde G(x,y)} = \frac{R(x,y)}{|(x,y) - \mathbf{a} |^2} \cdot\frac{|(x,y) - \mathbf{a}|^2}{\widetilde G(x,y)}
  = o(1)\frac{|(x,y) - \mathbf{a}|^2}{\widetilde G(x,y)}
\end{equation}
as $\delta \to 0$, where $o(1)$ is uniformly on $E_{\delta}$. It remains to show that $|(x,y) - \mathbf{a}|^2 / \widetilde G(x,y)$
is bounded on $E_{\delta}$ for small $\delta > 0$. Assume without loss of generality that $|x_1| \le |y_1| < a$. In view of $x \in E_\ell$ and $y \in E_r$, we get $0 < a - y_1 \le a + x_1$.
Consider in a first step the numerator of the right-most fraction figuring in~\eqref{eq_proof_lemma_uniformly}. With \eqref{eq_Courant_Fischer} and \autoref{lem_shifted_poles_in_xi_hat_P_H_i} we obtain for $(x,y) \in E_\delta$ and sufficiently small $\delta>0$
\begin{align*}
  |(x,y) - \mathbf{a}|^2
  &=  (a+x_1)^2 + (a-y_1)^2 + |\widetilde x|^2 + |\widetilde y|^2\\
  &\le  (a+x_1)^2 + (a-y_1)^2 + \frac{1}{\kappa_2^\ell}\widetilde x^\top  H_\ell \widetilde x + \frac{1}{\kappa_2^r}\widetilde y^\top  H_r \widetilde y\\
  &\le  (a+x_1)^2 + (a-y_1)^2 + \frac{2 \widehat \con}{\kappa_2^\ell} (a+x_1) + \frac{2 \widehat \con}{\kappa_2^r} (a-y_1)\\
  &\le  (a+x_1)^2 + (a+x_1)^2 + \frac{2 \widehat \con}{\kappa_2^\ell} (a+x_1) + \frac{2 \widehat \con}{\kappa_2^r} (a+x_1)\\
  &= (a+x_1)\left( 2(a+x_1) + \frac{2 \widehat \con}{\kappa_2^\ell} + \frac{2 \widehat \con}{\kappa_2^r}\right).
\end{align*}
As a consequence of $(x,y) \in E_{\delta}$ and $\delta \to 0$ we get $x_1 \to -a$, and thus the term inside the big brackets converges to $\frac{2 \widehat \con}{\kappa_2^\ell}+\frac{2 \widehat \con}{\kappa_2^r}$. We can conclude that there is a constant $c >0$ so that $
|(x,y) - \mathbf{a} |^2 < (a + x_1)\cdot c
$
for every $(x,y) \in E_{\delta}$ and sufficiently small $\delta >0$.
In a second step we look at the denominator of the right-most fraction
figuring in~\eqref{eq_proof_lemma_uniformly}.
Writing $\widetilde x = U_\ell \alpha$ and $\widetilde y = U_r \beta$, we deduce that
\begin{align*}
  \widetilde G(x,y)
  &= (a + x_1) + (a - y_1) - \frac{1}{4a}\big(|\widetilde x|^2 + |\widetilde y|^2 - 2\widetilde x^\top \widetilde y\big)\\
  &= (a + x_1) + (a - y_1) - \frac{1}{4a}\big(|\alpha|^2 + |\beta|^2 - 2\alpha^\top U_\ell^\top  U_r \beta\big) .
  \intertext{Inequality \eqref{eq_cond_1_unique_diam} now shows that}
  \widetilde G(x,y)
  &\ge (a + x_1) + (a - y_1) - \frac{1}{4a}2a\con\left( \alpha^\top D_\ell\alpha + \beta^\top D_r \beta\right) \\
  &= (a + x_1) + (a - y_1) - \frac{1}{2}\con\left( \widetilde x^\top  U_\ell D_\ell U_\ell^\top  \widetilde x + \widetilde y^\top  U_r D_r U_r^\top  \widetilde y\right)\\
  &= (a + x_1) + (a - y_1)  - \frac{1}{2}\con\left( \widetilde x^\top  H_\ell\widetilde x + \widetilde y^\top  H_r \widetilde y\right),
  \intertext{and by \autoref{lem_shifted_poles_in_xi_hat_P_H_i} we get for sufficiently small $\delta >0$}
  \widetilde G(x,y)
  &\ge (a + x_1) + (a - y_1)  - \frac{1}{2}\con\big(2\widehat \con (a + x_1) + 2\widehat \con  (a-y_1)\big) \\
  &= (a+x_1)\left( 1 + \frac{a-y_1}{a+x_1} - \con \widehat \con\left( 1 + \frac{a-y_1}{a+x_1}\right)\right)\\
  &= (a+x_1)\left( 1 - \con \widehat \con\right)\left( 1 + \frac{a-y_1}{a+x_1}\right).
\end{align*}
\autoref{rem_properties_con_hat} and $ \frac{a-y_1}{a+x_1} \ge 0$ now yield
$$
  \widetilde G(x,y)
  \ge (a+x_1)  \frac{1-\con}{2}\left( 1 + \frac{a-y_1}{a+x_1}\right)
  \ge (a+x_1)  \frac{1-\con}{2},
$$
where $\frac{1-\con}{2}> 0$.
Putting both parts together, we have
\begin{align*}
 \frac{|(x,y) - \mathbf{a} |^2 }{\widetilde G(x,y)}
  \le   \frac{(a+x_1)\cdot c}{(a+x_1)\cdot \frac{1-\con}{2}}
  = \frac{2c}{1 - \con}
\end{align*}
for every $(x,y) \in E_{\delta}$ and $\delta>0$ small enough, and the proof is finished.
\qed
\end{proof}

\subsection{Convergence of Poisson random measures}
\label{subsec_proof_main_thoerem_conv_point_processes}
In this subsection we will focus on the convergence of Poisson processes inside the sets $P_1(H_i)$ for $i \in \left\{ \ell,r\right\}$. \autoref{lemma_Conv_of_V_n} will be the key to describe the asymptotical behavior of those points of $\mathbf{Z}_n $ lying close to one of the poles if we `look through a suitably distorted magnifying glass' and let $n$ tend to infinity.
In what follows, put
\begin{equation}
\label{eq_def_of_nu}
\nu := \frac{1}{d+1}
\end{equation}
and
$$
T_n(z) := \left(\  n^{2\nu}z_1\ ,\ n ^{\nu}\widetilde z\ \right)
$$
for $n \in \N $ and $ z = (z_1,\widetilde z) \in \R^d.$

\begin{lemma}
\label{lemma_transformation_density}
Suppose that, for $i \in \left\{ \ell,r\right\}$, the random vector $\ZVhelp = (\ZVhelp_1,\ldots,\ZVhelp_d)$ has a density $g$ on $P_1(H_i) \cap \left\{ z_1 \le \delta^*\right\}$
with $g(z) = p\big(1+o(1)\big)$ uniformly on $P_1(H_i) \cap \left\{ z_1 \le \delta\right\}$ as $\delta \to 0$ for some $p >0$. Then, for every bounded Borel set $\boundSet \subset \R^d$, we have $\P\big( T_n(\ZVhelp) \in \boundSet \big)= \kappa_n(\boundSet)/n$
with
$ \kappa_n(\boundSet) \to p \cdot m_d\big|_{P(H_i)}(\boundSet)$ as $n \to \infty $.
\end{lemma}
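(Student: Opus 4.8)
The plan is to compute $\P\big(T_n(\ZVhelp) \in \boundSet\big)$ directly via the change-of-variables formula and then let $n \to \infty$. First I would note that $T_n$ is an invertible linear map with inverse $T_n^{-1}(w) = \big(n^{-2\nu}w_1, n^{-\nu}\widetilde w\big)$ and Jacobian determinant $|\det T_n^{-1}| = n^{-2\nu}\cdot n^{-(d-1)\nu} = n^{-(d+1)\nu} = n^{-1}$, by the very choice $\nu = 1/(d+1)$ in \eqref{eq_def_of_nu}. Writing the probability as an integral of the density $g$ over the preimage,
\begin{align*}
\P\big(T_n(\ZVhelp) \in \boundSet\big)
&= \P\big(\ZVhelp \in T_n^{-1}(\boundSet)\big)
= \int_{T_n^{-1}(\boundSet) \cap P_1(H_i) \cap \{z_1 \le \delta^*\}} g(z)\,\mathrm{d}z\\
&= \frac{1}{n}\int_{\boundSet} g\big(T_n^{-1}(w)\big)\, \ind\big\{T_n^{-1}(w) \in P_1(H_i) \cap \{z_1 \le \delta^*\}\big\}\,\mathrm{d}w.
\end{align*}
So $\kappa_n(\boundSet) := \int_{\boundSet} g\big(T_n^{-1}(w)\big)\,\ind\{T_n^{-1}(w) \in P_1(H_i) \cap \{z_1 \le \delta^*\}\}\,\mathrm{d}w$, and it remains to identify the limit of this integral as $n \to \infty$.

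Next I would handle the indicator and the density separately. Since $\boundSet$ is bounded, all $w \in \boundSet$ have $|w_1| \le C$ and $|\widetilde w| \le C$ for some $C$, so $T_n^{-1}(w)$ has first coordinate $n^{-2\nu}w_1 \to 0$ and $\widetilde{\cdot}$-part $n^{-\nu}\widetilde w \to 0$; in particular $T_n^{-1}(w) \in \{z_1 \le \delta^*\}$ and $T_n^{-1}(w) \in O_i \times \R$ for all $n$ large enough, uniformly in $w \in \boundSet$. For the defining inequality of $P_1(H_i)$ from \eqref{eq_def_P_1_H_i}, namely $\tfrac12 \widetilde z^\top H_i \widetilde z + R_i(\widetilde z) \le z_1$, substituting $z = T_n^{-1}(w)$ gives $\tfrac12 n^{-2\nu}\widetilde w^\top H_i \widetilde w + R_i(n^{-\nu}\widetilde w) \le n^{-2\nu}w_1$; multiplying by $n^{2\nu}$ turns this into $\tfrac12 \widetilde w^\top H_i \widetilde w + n^{2\nu}R_i(n^{-\nu}\widetilde w) \le w_1$. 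Because $R_i(\widetilde z) = o(|\widetilde z|^2)$, the error term $n^{2\nu}R_i(n^{-\nu}\widetilde w) = n^{2\nu}\cdot o(n^{-2\nu}|\widetilde w|^2) \to 0$ uniformly for $w$ in the bounded set $\boundSet$, so the indicator converges pointwise to $\ind\{\tfrac12 \widetilde w^\top H_i \widetilde w \le w_1\} = \ind\{w \in P(H_i)\}$ at every $w$ not on the (Lebesgue-null) boundary $\partial P(H_i)$. Simultaneously, $g\big(T_n^{-1}(w)\big) = p\big(1 + o(1)\big) \to p$ uniformly on $\boundSet$ by the hypothesis $g(z) = p(1+o(1))$ uniformly on $P_1(H_i) \cap \{z_1 \le \delta\}$ as $\delta \to 0$ (using that $T_n^{-1}(w)$ enters every shrinking cap $\{z_1 \le \delta\}$ for $n$ large). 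Hence the integrand converges $m_d$-a.e. on $\boundSet$ to $p\cdot\ind\{w \in P(H_i)\}$, and it is dominated by a constant (the density is bounded near the pole by the $p(1+o(1))$ condition, say by $2p$ for $n$ large) times $\ind_{\boundSet}$, which is integrable since $\boundSet$ is bounded. Dominated convergence then yields $\kappa_n(\boundSet) \to p\int_{\boundSet}\ind\{w\in P(H_i)\}\,\mathrm{d}w = p\cdot m_d\big|_{P(H_i)}(\boundSet)$, as claimed.

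The only genuinely delicate point is the uniform control of the rescaled remainder $n^{2\nu}R_i(n^{-\nu}\widetilde w)$: one must phrase $R_i(\widetilde z) = o(|\widetilde z|^2)$ as "for every $\varepsilon>0$ there is $\rho>0$ with $|R_i(\widetilde z)| \le \varepsilon|\widetilde z|^2$ for $|\widetilde z| \le \rho$", and then observe that for $w \in \boundSet$ (hence $|\widetilde w| \le C$) we have $|n^{-\nu}\widetilde w| \le \rho$ for $n$ large, so $|n^{2\nu}R_i(n^{-\nu}\widetilde w)| \le \varepsilon |\widetilde w|^2 \le \varepsilon C^2$ for all such $w$ and $n$; since $\varepsilon$ is arbitrary this gives the claimed uniform vanishing. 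Everything else is a routine change of variables plus dominated convergence, with the exponent bookkeeping $2\nu + (d-1)\nu = (d+1)\nu = 1$ being exactly what makes the normalization $\kappa_n(\boundSet)/n$ come out right.
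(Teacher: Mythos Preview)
Your proof is correct and follows essentially the same route as the paper: change of variables with Jacobian $n$, rewrite the support indicator via the rescaled defining inequality of $P_1(H_i)$, show that $n^{2\nu}R_i(n^{-\nu}\widetilde w)\to 0$ so the indicator converges a.e.\ to $\ind\{w\in P(H_i)\}$, use the uniform $p(1+o(1))$ assumption on the density together with the fact that $T_n^{-1}(\boundSet)$ falls into arbitrarily thin caps, and conclude by dominated convergence. Your treatment of the remainder term is in fact slightly more careful than the paper's (you verify uniform vanishing on $\boundSet$, whereas the paper only argues pointwise convergence before invoking dominated convergence), but the argument is otherwise the same.
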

\begin{proof}
To emphasize the support of $g$, we write $g(z)\ind \big\{ z \in P_1(H_i)\cap \left\{ z_1 \le \delta^*\right\}\big\} $ instead of $g(z)$. The Jacobian of $T_n$ is given by
$$
\Delta T_n(x) = \text{det}
\big( \diag\left( n^{2\nu},n^{\nu},\ldots,n^{\nu}\right)
\big)
=n^{(d+1)\nu} = n,
$$
and therefore the random vector $T_n(\ZVhelp)$ has the density
\begin{align*}
 g_n(z) =&\; \frac{g\left( T_n^{-1}(z)\right)}{n}
  = \;  \frac{1}{n} g\left( \frac{z_1}{n^{2\nu}}, \frac{1}{n^{\nu}}\widetilde z\right) \ind\big\{ z \in P_n(H_i)\big\},
\end{align*}
where $P_n(H_i) := T_n\big(P_1(H_i)\cap \left\{ z_1 \le \delta^*\right\}\big)$. In view of \eqref{eq_def_P_1_H_i} we get
\begin{align*}
  P_n(H_i)&= \left\{ z \in \R^d: T_n^{-1}(z) \in P_1(H_i)\cap \left\{ z_1 \le \delta^*\right\}\right\}\\
  &= \left\{ z \in \R^d:  \frac{1}{2}\left( \frac{1}{n^{\nu}}\widetilde z\right)^\top  H_i \left( \frac{1}{n^{\nu}}
  \widetilde  z\right) + R_i \left( \frac{1}{n^{\nu}}\widetilde z\right) \le \frac{z_1}{n^{2\nu}} \le \delta^*, \frac{1}{n^{\nu}}\widetilde z\in O_i\right\}\\
  &= \left\{ z \in \R^d:  \frac{1}{2}\widetilde z^\top  H_i \widetilde z + n^{2\nu}R_i \left( \frac{1}{n^{\nu}}\widetilde z\right)\le z_1 \le n^{2\nu}\delta^*, \widetilde z\in n^{\nu}O_i\right\}.
\end{align*}
Since $O_i$ is an open neighborhood of $\mathbf{0} \in \R^{d-1} $
and
$$
n^{2\nu}R_i \left(\frac{1}{n^{\nu}}\widetilde z\right)
= |\widetilde z|^2\cdot \frac{R_i \left(\frac{1}{n^{\nu}}\widetilde z\right)}{\left|\frac{1}{n^{\nu}}\widetilde z\right|^2}
 \to 0
$$
as $n \to \infty $ for each fixed $\widetilde z \in \R^{d-1}$,
we see that $\ind\big\{ z \in P_n(H_i)\big\} \to \ind\big\{ z \in P(H_i)\big\}$ for almost all $z \in \R^d$. Observe that this convergence does not hold true for $z = (z_1,\widetilde z) \in \R^d$ with $\frac{1}{2}\widetilde z^\top H_i \widetilde z = z_1$ and $R_i \big(\frac{1}{n^{\nu}}\widetilde z\big) > 0$ for infinitely many $n \in \N$. But, since $\left\{ z \in \R^d: \frac{1}{2}\widetilde z^\top H_i \widetilde z = z_1\right\}$ has Lebesgue measure $0$, these points will have no influence on the integrals to follow.
For each Borel set $\boundSet \subset \R^d$, we have
\begin{align*}
   \P\big( T_n(\ZVhelp) \in \boundSet \big)
  &=  \int_{\boundSet} g_n(z)\,\mathrm{d}z
  = \frac{1}{n} \int_{\boundSet} g\left( \frac{z_1}{n^{2\nu}}, \frac{1}{n^{\nu}}\widetilde z\right) \ind\left\{ z \in P_n(H_i)\right\}\,\mathrm{d}z.
\end{align*}
If $\boundSet$ is bounded, $\sup\left\{ z_1 : (z_1,\widetilde z) \in \boundSet\right\} \le i_1$ for some $i_1 \in [0,\infty )$. Consequently, $\left( \frac{z_1}{n^{2\nu}}, \frac{1}{n^{\nu}}\widetilde z\right) \in \left\{t \in \R^d: t_1 \le \frac{i_1}{n^{2\nu}}\right\}$ for every $z \in \boundSet$.
Since $g(z) = p\big(1+o(1)\big)$, uniformly on $P_1(H_i) \cap \left\{ z_1 \le \delta\right\}$ as $\delta \to 0$, we obtain
$g\left( \frac{z_1}{n^{2\nu}}, \frac{1}{n^{\nu}}\widetilde z\right) = p\big(1+o(1)\big)$
uniformly on $\boundSet$ as $n \to \infty$, whence
\begin{align*}
  \P\big( T_n(\ZVhelp) \in \boundSet \big)
  &= \frac{1}{n} \cdot p\int_{\boundSet} \big(1 + o(1)\big) \ind\left\{ z \in P_n(H_i)\right\}\,\mathrm{d}z =: \frac{1}{n} \cdot \kappa_n(\boundSet).
\end{align*}
Since $\boundSet$ is bounded and $\big(1 + o(1)\big)\ind\left\{ z \in P_n(H_i)\right\} \to \ind\left\{ z \in P(H_i)\right\}$ for almost all $z \in \R^d$, the dominated convergence theorem gives
\begin{align*}
 \lim_{n\to\infty}  \kappa_n(\boundSet)
  &= p \int_{\boundSet} \lim_{n\to\infty} \big(1 + o(1)\big) \ind\left\{ z \in P_n(H_i)\right\}\,\mathrm{d}z
  = p \int_{\boundSet} \ind\left\{ z \in P(H_i)\right\}\,\mathrm{d}z
  = p\cdot m_d\big|_{P(H_i)}(\boundSet).
\end{align*}
\qed
\end{proof}

\begin{remark}
\label{rem_inclusion_limiting_set}
In the main part of the proof of \autoref{thm_main_result} in \autoref{subsubsec_main_part_proof_main_thm}, we will have to investigate point processes living inside the sets $P_1(H_i)$. But, contrary to the setting in~\cite{Schrempp2016}, the inclusion $P_1(H_i) \subset P(H_i)$ does \emph{not} hold in general, and hence especially \emph{not} $P_n(H_i) \subset  P(H_i)$ for every $n \ge 1$. Therefore, the set
$P(H_i)$ is in general not suitable as state space for our point processes.
Letting $\R^d$ be the state space would rectify this problem, but then the proof of \autoref{lemma_continuity_of_G_hat} would fail. So, this is the point where it becomes crucial to slightly enlarge the sets $P(H_i)$ via $\widehat \con \cdot P(H_i)$.
According to \eqref{eq_shifted_poles_in_xi_hat_P_H_i_proof_help_0} and the choice of $\delta^{*}$ we have
\begin{equation}
\label{eq_Inclusion_in_eta_hat_P_H_i}
 P_1(H_i) \cap \left\{ z_1 \le \delta^{*}\right\}
\subset \widehat \con \cdot P(H_i)
\end{equation}
for $i \in \left\{ \ell,r\right\}$. If $z \in \widehat \con \cdot P(H_i)$, then $T_n(z) = \left( n^{2\nu}z_1,n^{\nu}\widetilde z\right)$ and \autoref{rem_representation_widehat_xi_P_H_i} yield
$$
\frac{1}{2}(n^{\nu}\widetilde z)^\top  H_i (n^{\nu}\widetilde z) = n^{2\nu}\frac{1}{2}\widetilde z^\top  H_i \widetilde z \le \widehat \con n^{2\nu} z_1,
$$
i.e, we have $T_n(z) \in \widehat\con \cdot P(H_i)$
for every $n \ge 1$. We thus get the inclusion
$$
T_n\big( \widehat\con \cdot P(H_i)\big) \subset \widehat\con \cdot P(H_i)
$$
for each $n \ge 1$,
and \eqref{eq_Inclusion_in_eta_hat_P_H_i} implies
$$
T_n\big( P_1(H_i) \cap \left\{ z_1 \le \delta^{*}\right\}\big) \subset \widehat\con \cdot P(H_i).
$$
Thus, we can use the state space $\widehat\con \cdot P(H_\ell)$ for the point processes representing the random points near the left pole and $\widehat\con \cdot P(H_r)$ for the corresponding processes near the right pole. In the proofs to follow, it will be very important to consider only the sets $E_\delta$ $($given in \eqref{eq_Representation_E_delta}$)$ with $\delta \in (0,\delta^{*}]$. Without this restriction, the point processes could `leave' their state space, and the proof of \autoref{lemma_continuity_of_G_hat} would fail. Since the asymptotical behavior of the maximum distance will be determined close to the poles, this restriction does not mean any loss of generality.
Without \autoref{cond_A_eta_pos_semi_definite} it could be very complicated to find state spaces that are large enough to include the processes $($close to the poles$)$ but are also small enough to allow an adapted version of \autoref{lemma_continuity_of_G_hat}. These state spaces would have to be defined depending on $($the signs of$)$ the error functions $R_i $ in every direction of $\R^{d-1}$, we omit details.
\end{remark}

As before, let $\ZVhelp = (\ZVhelp_1,\ldots,\ZVhelp_d)$ have a density $g$ on $P_1(H_i) \cap \left\{ z_1 \le \delta^{*}\right\}$ with $g(z) = p\big(1+o(1)\big)$ uniformly on $P_1(H_i) \cap \left\{ z_1 \le \delta\right\}$ as $\delta \to 0$ for some $p >0$. For $n \in \N$ and some fixed $c > 0$ let $\widetilde {\mathbf{\ZVhelp}}_n$ be a Poisson process with intensity measure $nc\cdot \P_\ZVhelp$. With independently chosen
$N_n \overset{\mathcal{D}}{= } \Po(nc)$ and i.i.d. $\widetilde Z_1, \widetilde Z_2, \ldots$ with distribution $\P_\ZVhelp$, we have
$
\widetilde {\mathbf{\ZVhelp}}_n \overset{\mathcal{D}}{= }  \sum_{j=1}^{N_n}\varepsilon_{\widetilde Z_{j}}.
$
According to the Mapping Theorem for Poisson processes, see \cite[p. 38]{Last2017}, $\mathbf{\ZVhelp}_n := \widetilde {\mathbf{\ZVhelp}}_n \circ T_n^{-1}$ is a Poisson process with intensity measure $ \mu_n := nc \cdot \P_\ZVhelp\circ T_n^{-1}$, and the representation above yields
$
\mathbf{\ZVhelp}_n \overset{\mathcal{D}}{= }  \sum_{j=1}^{N_n}\varepsilon_{T_n(\widetilde Z_{j})}.
$
We have $\mathbf{\ZVhelp}_n \in M_p\Big( T_n\big(P_1(H_i)\cap \left\{ z_1 \le \delta^*\right\}\big)\Big)$, $n \in \N$, and because of \autoref{rem_inclusion_limiting_set} it follows that $\mathbf{\ZVhelp}_n \in M_p\big( \widehat \con \cdot P(H_i)\big)$.

\begin{lemma}
\label{lemma_Conv_of_V_n}
Let $\mathbf{\ZVhelp}_n $ be defined as above. Then $\mathbf{\ZVhelp}_n \overset{\mathcal{D}}{\longrightarrow } \mathbf{\ZVhelp}$ with $\mathbf{\ZVhelp} \overset{\mathcal{D}}{= } \PRM(\mu)$ and $\mu := p c \cdot m_d\big|_{P(H_i)} $.
\end{lemma}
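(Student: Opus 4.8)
The plan is to deduce the statement from convergence of the intensity measures together with the standard continuity theorem for Poisson processes, so that the only real input is \autoref{lemma_transformation_density}. All processes involved live on the closed set $D := \widehat\con\cdot P(H_i)\subset\R^d$: by \autoref{rem_inclusion_limiting_set} we have $\mathbf{\ZVhelp}_n\in M_p(D)$ for every $n$, and the limiting intensity $\mu = pc\cdot m_d\big|_{P(H_i)}$ is concentrated on $P(H_i)\subset D$. Since $D$ is a closed subset of $\R^d$ it is locally compact with a countable base, the bounded Borel subsets of $D$ are exactly the relatively compact ones, and $\mu$ is a Radon (locally finite) measure on $D$ because $m_d\big|_{P(H_i)}(\boundSet)\le m_d(\boundSet)<\infty$ for every bounded $\boundSet$.

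First I would identify the intensity measures. By construction $\mu_n = nc\cdot\P_{\ZVhelp}\circ T_n^{-1}$, so for every bounded Borel set $\boundSet\subset\R^d$,
\[
\mu_n(\boundSet) \;=\; nc\cdot\P\big(\ZVhelp\in T_n^{-1}(\boundSet)\big)\;=\; nc\cdot\P\big(T_n(\ZVhelp)\in\boundSet\big)\;=\; c\,\kappa_n(\boundSet),
\]
and \autoref{lemma_transformation_density} yields $c\,\kappa_n(\boundSet)\to cp\cdot m_d\big|_{P(H_i)}(\boundSet)=\mu(\boundSet)$. Hence $\mu_n(\boundSet)\to\mu(\boundSet)$ for \emph{every} bounded Borel set $\boundSet$, which in particular gives vague convergence $\mu_n\overset{v}{\longrightarrow}\mu$ on $D$ (no $\mu(\partial\boundSet)=0$ condition being needed, since the convergence already holds on all bounded sets).

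Next I would invoke the continuity theorem for Poisson processes (see, e.g., \cite{Resnick1987}, Chapter~3, or \cite{Last2017}): if $\mu_n\overset{v}{\longrightarrow}\mu$ on the locally compact second countable space $D$, then $\PRM(\mu_n)\overset{\mathcal{D}}{\longrightarrow}\PRM(\mu)$ in $M_p(D)$. Since $\mathbf{\ZVhelp}_n\overset{\mathcal{D}}{=}\PRM(\mu_n)$ and $\mathbf{\ZVhelp}\overset{\mathcal{D}}{=}\PRM(\mu)$, this is precisely the claim. Alternatively, and essentially equivalently, one may argue directly via Laplace functionals: for $f\in C_c^+(D)$ the function $h:=1-e^{-f}$ is continuous, bounded and supported in some bounded Borel set $\boundSet_0$, and setwise convergence $\mu_n(\boundSet)\to\mu(\boundSet)$ for all Borel $\boundSet\subseteq\boundSet_0$ (with $\sup_n\mu_n(\boundSet_0)<\infty$) forces $\int_D h\,\mathrm{d}\mu_n\to\int_D h\,\mathrm{d}\mu$; consequently $\E\big[e^{-\mathbf{\ZVhelp}_n(f)}\big]=\exp\big(-\int_D h\,\mathrm{d}\mu_n\big)\to\exp\big(-\int_D h\,\mathrm{d}\mu\big)=\E\big[e^{-\mathbf{\ZVhelp}(f)}\big]$, and convergence of Laplace functionals on $C_c^+(D)$ characterizes convergence in distribution in $M_p(D)$.

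I expect no genuine obstacle here: the substantive work has been carried out in \autoref{lemma_transformation_density}, and what remains is bookkeeping. The only points requiring a little care are (i) to work on the state space $D=\widehat\con\cdot P(H_i)$ rather than $\R^d$, so that local compactness holds and $\mu$ is Radon — which is exactly why the enlargement by $\widehat\con$ was introduced in \autoref{rem_inclusion_limiting_set} — and (ii) to note that \autoref{lemma_transformation_density} supplies convergence on a class of test sets rich enough (all bounded Borel sets) to imply vague convergence of the intensities.
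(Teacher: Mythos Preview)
Your proposal is correct and follows essentially the same route as the paper: both reduce the lemma to \autoref{lemma_transformation_density}, which yields $\mu_n(\boundSet)=c\kappa_n(\boundSet)\to\mu(\boundSet)$ for all bounded Borel $\boundSet$, and then invoke a standard convergence criterion for point processes. The only cosmetic difference is the packaging: the paper appeals to Kallenberg's criterion (Proposition~3.22 in \cite{Resnick1987}), verifying $\P\big(\mathbf{\ZVhelp}_n(I)=0\big)\to\P\big(\mathbf{\ZVhelp}(I)=0\big)$ and $\E\big[\mathbf{\ZVhelp}_n(I)\big]\to\E\big[\mathbf{\ZVhelp}(I)\big]$ on finite unions of bounded rectangles, whereas you go through vague convergence of intensities and the continuity theorem $\PRM(\mu_n)\overset{\mathcal{D}}{\longrightarrow}\PRM(\mu)$ (or equivalently Laplace functionals); in the Poisson case these criteria collapse to the same computation.
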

\begin{proof}
We use Proposition 3.22 in~\cite{Resnick1987}. Writing $\mathcal{I}$ for the set of finite unions of bounded open rectangles, we have to show that $\P\left( \mathbf{\ZVhelp} (\partial I) = 0\right) = 1$, and that both the conditions (3.23) and~(3.24) in~\cite{Resnick1987}  hold for every $I \in \mathcal{I} $. Because of $\mu(\partial I) = 0$, the first requirement obviously holds, and an application of \autoref{lemma_transformation_density} gives
$$
\mu_n(I) = nc \cdot \left( \P_\ZVhelp\circ T_n^{-1} \right)(I) = nc \cdot \P \big( T_n(\ZVhelp) \in I \big) = c\kappa_n(I) \to \mu(I).
$$
Since $\mathbf{\ZVhelp}_n $ and $\mathbf{\ZVhelp} $ are Poisson processes, we get
$$
\P\big(\mathbf{V}_n(I) = 0 \big) = e^{-\mu_n(I)}\frac{\mu_n(I)^0}{0!} = e^{-\mu_n(I)} \to  e^{-\mu(I)} = e^{-\mu(I)}\frac{\mu(I)^0}{0!} = \P\big(\mathbf{V}(I) = 0 \big)
$$
and
$$
\E\big[ \mathbf{V}_n(I)\big] =  \mu_n(I) \to \mu(I) = \E\big[ \mathbf{V}(I)\big] < \infty .
$$
\qed
\end{proof}

\subsection{Main part of the proof of \autoref{thm_main_result}}
\label{subsubsec_main_part_proof_main_thm}
\begin{proof}
As stated before, we only consider $\delta \in (0,\delta^{*}]$.
Recall
$$
E_{\delta} = \big\{ (x,y) \in E_\ell \times E_r: -a \le x_1 \le -a + \delta, a - \delta \le y_1 \le a\big\},
$$
$\delta >0$, and put
$
I_{n}^{\delta} := \big\{ (i,j) : 1 \le  i, j \le N_n, (Z_i,Z_j) \in E_{\delta} \big\},
$
$n \in \N$.
Letting
$
M_{n}^{\delta} := \max_{(i,j) \in I_n^{\delta}} \big|Z_i - Z_j\big|,
$
we obtain $\P\big(M_{n}^{\delta} \neq \diam(\mathbf{Z}_n )\big) \rightarrow  0$ for each $\delta >0$, since both
\begin{align*}
\P\big( Z \in E \cap \left\{ -a \le z_1 \le - a + \delta\right\}\big) > 0
\qquad
\text{and}
\qquad
\P\big( Z \in E \cap \left\{  a - \delta \le z_1 \le a  \right\}\big) > 0
\end{align*}
hold true for each $\delta > 0$.
Hence, it suffices to investigate $M_n^{\delta}$ for some fixed $\delta >0$ instead of $\diam(\mathbf{Z}_n )$.
According to \eqref{eq_2a_minus_norm_is_G_tilde_minus_R} and \autoref{lemma_R_is_little_o_of_G_tilde}, for each $\varepsilon >0$ there is some $\delta >0$ so that
$$
\widetilde G(x,y)(1-\varepsilon) \le 2a - |x-y| \le \widetilde G(x,y)(1+\varepsilon)
$$
for each $(x,y) \in E_{\delta}$. These inequalities imply
\begin{align*}
  (1-\varepsilon)\min_{(i,j) \in I_n^{\delta}}\Big\{ n^{2\nu}\widetilde G(Z_i,Z_j)\Big\}
  \le n^{2\nu}\big( 2a - M_n^{\delta} \big)
 &= \min_{(i,j) \in I_n^{\delta}}\Big\{ n^{2\nu}\big(2a - |Z_i - Z_j|\big)\Big\}
  \le 
   (1+\varepsilon)\min_{(i,j) \in I_n^{\delta}}\Big\{ n^{2\nu}\widetilde G(Z_i,Z_j)\Big\}.
\end{align*}
Putting $c_{\ell,\delta} := \int_{E_{\ell,\delta}} f(z)\,\mathrm{d}z $ and $c_{r,\delta} := \int_{E_{r,\delta}} f(z)\,\mathrm{d}z $, we define the independent random vectors $ X,Y $ with densities $c_{\ell,\delta}^{-1}f\big|_{ E_{\ell,\delta}}$ and $c_{r,\delta}^{-1}f\big|_{ E_{r,\delta}}$, respectively. Furthermore, for $n \in \N$, we introduce the independent Poisson processes $\widehat{\mathbf{X} }_n$ and $\widehat{\mathbf{Y} }_n$ with intensity measures $nc_{\ell,\delta} \cdot\P_{X}$ and $nc_{r,\delta} \cdot\P_{Y}$, respectively. With independent random elements $N_{\ell,n}, N_{r,n}$, $X_1,X_2,\ldots$, $Y_1,Y_2,\ldots$,
where $N_{\ell,n} \overset{\mathcal{D}}{= } \Po(nc_{\ell,\delta})$, $N_{r,n} \overset{\mathcal{D}}{= } \Po(nc_{r,\delta})$, $X_1,X_2,\ldots $ are i.i.d. with distribution $\P_X$ and $Y_1,Y_2,\ldots $ are i.i.d. with distribution $\P_Y$, we get
$$
\widehat{\mathbf{X} }_n \overset{\mathcal{D}}{= } \sum_{i=1}^{N_{\ell,n}}\varepsilon_{X_i} \qquad \text{and} \qquad
\widehat{\mathbf{Y} }_n \overset{\mathcal{D}}{= } \sum_{j=1}^{N_{r,n}}\varepsilon_{Y_j}.
$$
Letting
$I_n := \big\{ (i,j) : 1 \le i\le N_{\ell,n}, 1 \le j \le N_{r,n}\big\}$,
we obtain
$
M_n^{\delta} \overset{\mathcal{D}}{= } \max_{(i,j) \in I_n} \big|X_i - Y_j\big|.
$
As above, the inequalities
\begin{align}
 (1-\varepsilon)\min_{(i,j) \in I_n}\Big\{ n^{2\nu}\widetilde G(X_i,Y_j)\Big\}
 &\le n^{2\nu}\left( 2a - \max_{(i,j) \in I_n} \big|X_i - Y_j\big| \right)
 \le (1+\varepsilon)\min_{(i,j) \in I_n}\Big\{ n^{2\nu}\widetilde G(X_i,Y_j)\Big\}\label{eq_proof_main_thm_inequalities}
\end{align}
hold, and since $\varepsilon >0$ can be chosen arbitrarily small, it suffices to examine
$
\min_{(i,j) \in I_n}\big\{ n^{2\nu}\widetilde G(X_i,Y_j)\big\}.
$
We get
\begin{align}
 n^{2\nu}\widetilde G(X_i,Y_j)  \
=\ &   n^{2\nu}\left( (a + X_{i,1}) + (a - Y_{j,1}) - \frac{1}{4a}\big|\widetilde X_{i} - \widetilde Y_{j}\big|^2 \right)
=\   G\Big (   n^{2\nu}\big(a + X_{i,1}\big)  ,   n^{\nu}\widetilde X_{i}  ,   n^{2\nu} \big(a - Y_{j,1}\big)  ,   n^{\nu}\widetilde Y_{j}   \Big),\label{eq_reason_first_power_bigger}
\end{align}
where
$$
G: \begin{cases}
  \widehat \con \cdot P(H_\ell) \times \widehat \con \cdot P(H_r) \to \R_{+}, \\
  (x,y) \mapsto x_1 + y_1 - \frac{1}{4a}|\widetilde x- \widetilde y|^2.
\end{cases}
$$
The proof of \autoref{lemma_continuity_of_G_hat} will show that $G(x,y) \ge 0$ for every $(x,y) \in \widehat \con \cdot P(H_\ell) \times \widehat \con \cdot P(H_r) $.
It will be important that $G$ is only defined on $\widehat \con \cdot P(H_\ell) \times \widehat \con \cdot P(H_r) $, not on $\R^{2d}$ (see the proof of \autoref{lemma_continuity_of_G_hat}). This will be no restriction: Because of \autoref{rem_inclusion_limiting_set} it suffices to use instead of $\R^d$ the state spaces $\widehat \con \cdot P(H_\ell)$ and $\widehat \con \cdot P(H_r) $ for the point processes $\mathbf{X}_n $ and $\mathbf{Y}_n $, respectively, where $\mathbf{X}_n $ and $\mathbf{Y}_n $ will be defined later.
To this end, we introduce the Poisson processes
$$
\widetilde{\mathbf{X} }_n := \sum_{i=1}^{N_{\ell,n}}\varepsilon_{( \ a+X_{i,1} \ , \ \widetilde X_{i} \ )} \qquad \text{and} \qquad
\widetilde{\mathbf{Y} }_n := \sum_{j=1}^{N_{r,n}}\varepsilon_{( \ a-Y_{j,1} \ , \ \widetilde Y_{j} \ )}
$$
on $\big(P_1(H_\ell) \cap \{z_1 \le \delta^{*}\}\big) \subset \widehat \con \cdot P(H_\ell)$ and $\big(P_1(H_r) \cap \{z_1 \le \delta^{*}\}\big) \subset \widehat \con \cdot P(H_r)$, respectively.
In view of \autoref{cond_density_p_l_p_r}, we can apply \autoref{lemma_Conv_of_V_n}, and since $\widetilde{\mathbf{X} }_n$ and $\widetilde{\mathbf{Y} }_n$ are independent, we conclude that
\begin{equation}
\label{eq_conv_of_X_and_Y}
  \mathbf{X}_n := \widetilde {\mathbf{X}}_n \circ T_n^{-1} \overset{\mathcal{D}}{\longrightarrow }  \mathbf{X} \qquad \text{and} \qquad
\mathbf{Y}_n := \widetilde {\mathbf{Y}}_n \circ T_n^{-1} \overset{\mathcal{D}}{\longrightarrow } \mathbf{Y}
\end{equation}
on $M_p\big( \widehat \con \cdot P(H_\ell)\big)$ and $M_p\big( \widehat \con \cdot P(H_r)\big)$, respectively, with independent point processes $\mathbf{X} := \left\{ \mathcal{X}_i, i \ge 1\right\} \overset{\mathcal{D}}{= } \PRM\big(p_\ell\cdot m_d\big|_{P(H_\ell)}\big)$ and $ \mathbf{Y} := \left\{ \mathcal{Y}_j, j \ge 1\right\} \overset{\mathcal{D}}{= } \PRM\big(p_r\cdot m_d\big|_{P(H_r)}\big)$. Observe that an application of \autoref{lemma_Conv_of_V_n} to $\mathbf{X}_n$ yields $p = p_\ell/c_{\ell,\delta}$, $c = c_{\ell,\delta}$ and finally $\mu = pc \cdot m_d\big|_{P(H_\ell)} = p_\ell \cdot m_d\big|_{P(H_\ell)}$.
By construction, we have the representations
$$
\mathbf{X}_n = \sum_{i=1}^{N_{\ell,n}}\varepsilon_{T_n( \ a+X_{i,1} \ , \ \widetilde X_{i} \ )} =
\sum_{i=1}^{N_{\ell,n}}\varepsilon_{\left(  \ n^{2\nu}(a + X_{i,1}) \ , \ n^{\nu}\widetilde X_{i} \ \right)}
\qquad
\text{and}
\qquad
\mathbf{Y}_n = \sum_{j=1}^{N_{r,n}}\varepsilon_{T_n( \ a-Y_{j,1} \ , \ \widetilde Y_{j} \ )} =
\sum_{j=1}^{N_{r,n}}\varepsilon_{\left(  \ n^{2\nu}(a - Y_{j,1}) \ , \  n^{\nu}\widetilde Y_{j} \ \right)}.
$$
According to Proposition 3.17 in~\cite{Resnick1987}, $M_p\big( \widehat \con \cdot P(H_\ell)\big)$ and $M_p\big( \widehat \con \cdot P(H_r)\big)$ are separable. By Appendix M10 in \cite{Billingsley1999} we know that $M_p\big( \widehat \con \cdot P(H_\ell)\big)\times M_p\big( \widehat \con \cdot P(H_r)\big)$ is separable, too, and invoking Theorem 2.8 of \cite{Billingsley1999} \eqref{eq_conv_of_X_and_Y} implies $\mathbf{X}_n \times \mathbf{Y}_n \overset{\mathcal{D}}{\longrightarrow } \mathbf{X} \times \mathbf{Y}.$
Define now
\begin{equation*}
\label{eq_def_G_hat}
\widehat G: \begin{cases}
  M_p\big( \widehat \con \cdot P(H_\ell)\big)\times M_p\big( \widehat \con \cdot P(H_r)\big) \to M_p(\R_{+}), \\
  \mu \mapsto \mu \circ G^{-1}.
\end{cases}
\end{equation*}
By construction, we have the representations
\begin{align*}
\widehat G(\mathbf{X}_n \times \mathbf{Y}_n) &= \sum_{i=1}^{N_{\ell,n}}\sum_{j=1}^{N_{r,n}} \varepsilon_{G\left(  \ n^{2\nu}(a + X_{i,1}) \ , \ n^{\nu}\widetilde X_{i} \ , \ n^{2\nu}(a - Y_{j,1}) \ , \  n^{\nu}\widetilde  Y_{j} \ \right) }
\qquad \text{and} \qquad
\widehat G(\mathbf{X} \times \mathbf{Y}) = \sum_{i,j \ge 1}\varepsilon_{G(\mathcal{X}_i,\mathcal{Y}_j  )}.
\end{align*}
Since the mapping $\widehat G$ is continuous (see \autoref{lemma_continuity_of_G_hat}), the continuous mapping theorem gives
\begin{equation}
\label{eq_conv_G_hat_X_times_Y}
  \widehat G(\mathbf{X}_n \times \mathbf{Y}_n) \overset{\mathcal{D}}{\longrightarrow } \widehat G(\mathbf{X} \times \mathbf{Y}).
\end{equation}
For a point process $\xi$ on $\R_+$ we define $t_1(\xi) := \min\left\{ t \ge 0: \xi\big([0,t]\big) \ge 1\right\}.$
The reason for introducing $t_1$ is the very useful relation
$$
\min_{(i,j) \in I_n}\Big\{ n^{2\nu}\widetilde G(X_i,Y_j)\Big\} = t_1\big( \widehat G(\mathbf{X}_n \times \mathbf{Y}_n)\big).
$$
\autoref{lemma_conv_of_t_1} says that
$t_1\big( \widehat G(\mathbf{X}_n \times \mathbf{Y}_n) \big) \overset{\mathcal{D}}{\longrightarrow } t_1\big( \widehat G(\mathbf{X} \times \mathbf{Y}) \big)$
and, because of
\begin{align*}
t_1\big( \widehat G(\mathbf{X} \times \mathbf{Y}) \big)
= \min_{i,j \ge 1} \big\{ G(\mathcal{X}_i,\mathcal{Y}_j  ) \big\}
&= \min_{i,j \ge 1} \left\{ \mathcal{X}_{i,1}  + \mathcal{Y}_{j,1} - \frac{1}{4a} \big| \widetilde {\mathcal{X}}_{i}- \widetilde {\mathcal{Y}}_{j}\big|^2\right\},
\end{align*}
the convergence stated in~\eqref{eq_theorem_main_result} follows from \eqref{eq_proof_main_thm_inequalities} as $\varepsilon \to 0$. Applying Theorem 3.2 in~\cite{MayerMol2007} to the functional $\Psi(\mathbf{Z}_n )= 2 - \diam(\mathbf{Z}_n )$ shows that the same result holds true if we replace $\diam(\mathbf{Z}_n )$ with $M_n$.
\qed
\end{proof}

\begin{remark}
\label{remark_reason_def_T_n}
An explanation for the definition of the rescaling function $T_n(z) = (n^{2\nu}z_1,n^{\nu}\widetilde z)$ with $\nu = 1/(d+1)$ can be found in the proof of \autoref{lemma_Conv_of_V_n}: The $d$ powers of $n$ have to be chosen in such a way that their sum is $1$. This requirement implies $\Delta T_n(z) = n$ in the proof of \autoref{lemma_transformation_density}, whence $\P(T_n(\ZVhelp)\in \boundSet) = \kappa_n(\boundSet)/n$. As seen in the proof of \autoref{lemma_Conv_of_V_n}, the factors $1/n$ and $n$ cancel out, and only $c\kappa_n(\boundSet)$ remains. The reason why the first power is twice the other $d-1$ identical powers is due to the Taylor series expansion of $|x-y|$ in~\eqref{eq_taylor_eukl_dist_at_poles}. This fact fits exactly to the shape of $E$ near the poles, so that $P_n(H_i) = T_n\big(P_1(H_i) \cap \left\{ z_1 \le \delta^{*}\right\}\big)$ can converge to the set $P(H_i)$, $i \in \left\{ \ell,r\right\}$ $($see the proof of \autoref{lemma_transformation_density}$)$. Finally, from~\eqref{eq_reason_first_power_bigger} it is clear that $n^{2\nu}$ is the correct scaling factor.
\end{remark}

We still have to verify the continuity of the function $\widehat G$:

\begin{lemma}
\label{lemma_continuity_of_G_hat}
The function $\widehat G$ is continuous.
\end{lemma}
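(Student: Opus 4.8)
The map $\widehat G$ is the pushforward operator $\mu\mapsto \mu\circ G^{-1}$ induced by $G$, so the plan is to invoke the standard principle that such an operator is vaguely continuous once $G$ is continuous and \emph{proper}, meaning that $G$ takes values in $\R_+$ and that $G^{-1}([0,t])$ is relatively compact in the domain of $G$ for every $t\ge 0$. Since the spaces $M_p\big(\widehat\con\cdot P(H_\ell)\big)$, $M_p\big(\widehat\con\cdot P(H_r)\big)$ and $M_p(\R_+)$ carry the vague topology and are metrizable (see \cite{Resnick1987}), it is enough to establish sequential continuity, and everything reduces to the two properties of $G$ just named. Continuity of $G$ is immediate, $G$ being the restriction of a polynomial to the closed subset $\widehat\con\cdot P(H_\ell)\times\widehat\con\cdot P(H_r)$ of $\R^{2d}$.

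The heart of the matter is the lower bound
\[
G(x,y)\ \ge\ \frac{1-\con}{2}\,(x_1+y_1)\ \ge\ 0
\qquad\text{for all }(x,y)\in\widehat\con\cdot P(H_\ell)\times\widehat\con\cdot P(H_r),
\]
which at once shows that $G$ maps into $\R_+$ (so that $\mu\circ G^{-1}\in M_p(\R_+)$ is meaningful) and drives properness. To prove it I would write $\widetilde x=U_\ell\alpha$, $\widetilde y=U_r\beta$, so that $|\widetilde x-\widetilde y|^2=|\alpha|^2+|\beta|^2-2\alpha^\top U_\ell^\top U_r\beta$ and, by \autoref{rem_representation_widehat_xi_P_H_i}, $\alpha^\top D_\ell\alpha=\widetilde x^\top H_\ell\widetilde x\le 2\widehat\con x_1$ and $\beta^\top D_r\beta=\widetilde y^\top H_r\widetilde y\le 2\widehat\con y_1$ (in particular $x_1,y_1\ge 0$ since $H_\ell,H_r$ are positive definite). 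Plugging the equivalent form \eqref{eq_cond_1_unique_diam} of \autoref{cond_A_eta_pos_semi_definite} into the definition of $G$ gives
\[
G(x,y)\ \ge\ x_1+y_1-\frac{1}{4a}\cdot 2a\con\big(\alpha^\top D_\ell\alpha+\beta^\top D_r\beta\big)\ \ge\ (1-\con\widehat\con)(x_1+y_1),
\]
and \autoref{rem_properties_con_hat} identifies $1-\con\widehat\con$ with $(1-\con)/2>0$. This is precisely the step where both properties of $\widehat\con$ recorded in \autoref{rem_properties_con_hat} are used, and it is the reason $G$ is defined on the enlarged sets $\widehat\con\cdot P(H_i)$ rather than on $\R^{2d}$.

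From this bound properness follows quickly: if $G(x,y)\le t$, then $0\le x_1\le 2t/(1-\con)$ and $0\le y_1\le 2t/(1-\con)$, while the Courant--Fischer inequality \eqref{eq_Courant_Fischer} yields $|\widetilde x|^2\le(\kappa_2^\ell)^{-1}\widetilde x^\top H_\ell\widetilde x\le 2\widehat\con(\kappa_2^\ell)^{-1}x_1$ and similarly for $\widetilde y$; hence $G^{-1}([0,t])$ is a bounded subset of $\R^{2d}$, and being contained in the closed set $\widehat\con\cdot P(H_\ell)\times\widehat\con\cdot P(H_r)$ it is relatively compact there. To finish, for any continuous compactly supported $h\colon\R_+\to\R_+$ the composition $h\circ G$ is continuous with support contained in $G^{-1}(\operatorname{supp}h)$, which is relatively compact by the previous sentence; thus $\int h\,\mathrm{d}(\mu_n\circ G^{-1})=\int h\circ G\,\mathrm{d}\mu_n\to\int h\circ G\,\mathrm{d}\mu=\int h\,\mathrm{d}(\mu\circ G^{-1})$ whenever $\mu_n\to\mu$ vaguely, which is exactly vague convergence $\widehat G(\mu_n)\to\widehat G(\mu)$ (and the same argument applies to the product measures $\mathbf X_n\times\mathbf Y_n$ feeding $\widehat G$ in the main proof). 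The only genuine obstacle is the displayed lower bound: once one sees that \eqref{eq_cond_1_unique_diam} together with the defining inequalities of $\widehat\con\cdot P(H_i)$ produces the clean cancellation $(1-\con\widehat\con)(x_1+y_1)$, the remainder is routine vague-topology bookkeeping.
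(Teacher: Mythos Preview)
Your proof is correct and follows essentially the same route as the paper: both arguments hinge on the lower bound $G(x,y)\ge\frac{1-\con}{2}(x_1+y_1)$, obtained by writing $\widetilde x=U_\ell\alpha$, $\widetilde y=U_r\beta$, applying \eqref{eq_cond_1_unique_diam}, and then using the defining inequality of $\widehat\con\cdot P(H_i)$ together with \autoref{rem_properties_con_hat}. The only cosmetic difference is that the paper invokes Proposition~3.18 of \cite{Resnick1987} as a black box for the reduction to properness of $G$, whereas you spell out the vague-topology argument with compactly supported test functions directly.
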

\begin{proof}
This assertion may be proved in the same way as Proposition 3.18 in~\cite{Resnick1987}. We thus only have to demonstrate that $G^{-1}(K) \subset \widehat \con  \cdot P(H_\ell) \times \widehat \con  \cdot P(H_r)$ is compact if $K \subset \R$ is compact. For this purpose, let $K \subset \R$ be compact. Since $G$ is continuous, $G^{-1}(K)$ is closed, and it remains to show that $G^{-1}(K)$ is bounded. From the specific form of $\widehat \con  \cdot P(H_\ell) \times \widehat \con  \cdot P(H_r)$, $G^{-1}(K)$ can only be unbounded if it is unbounded in $x_1$- or $y_1$-direction (at this point it is important that our state spaces for the point processes are not $\R^d$, but only the subsets $\widehat \con  \cdot P(H_\ell)$ and $\widehat \con  \cdot P(H_r)$).
For fixed $(x,y) \in \widehat\con \cdot P(H_\ell) \times \widehat\con \cdot P(H_r)$, let $\alpha,\beta \in \R^{d-1}$, so that $\widetilde x = U_\ell \alpha$ and $\widetilde y = U_r\beta$. Applying the same transformations as seen for $\widetilde G(x,y)$ in the proof of \autoref{lemma_R_is_little_o_of_G_tilde} to $G(x,y)$ yields
$$
  G(x,y)
  \ge x_1 + y_1 - \con\left(\frac{1}{2} \widetilde x^\top  H_\ell\widetilde x + \frac{1}{2}\widetilde y^\top  H_r \widetilde y\right),
 $$
and using the representation of $\widehat \con \cdot P(H_i)$ given in \autoref{rem_representation_widehat_xi_P_H_i} shows that
$$
  G(x,y)
  \ge x_1 + y_1 - \con\left(\widehat \con  x_1 + \widehat \con  y_1\right)
  = \left( 1 - \con\widehat \con\right)(x_1 + y_1)
  = \frac{1-\con}{2}(x_1 + y_1).
$$
Since $\con \in (0,1)$, we have $\frac{1-\con}{2} > 0$ and the assumption $(x,y) \in \widehat \con  \cdot P(H_\ell) \times \widehat \con \cdot P(H_r)$ implies $(x_1,y_1) \in \R_+^2$, so that $G(x,y) \ge 0$ for each $(x,y) \in \widehat \con  \cdot P(H_\ell) \times \widehat \con  \cdot P(H_r)$.
If $x_1 \to \infty$ and/or $y_1 \to \infty$, the lower bound $ \frac{1-\con}{2}(x_1+ y_1)$ for $G(x,y)$ also tends to infinity. From the boundedness of $K$ it follows that $G^{-1}(K)$ has to be bounded in $x_1$- and $y_1$-direction, too. This argument finishes the proof.
\qed
\end{proof}

Finally, we have to prove the last lemma, used in the proof of \autoref{thm_main_result}:

\begin{lemma}
\label{lemma_conv_of_t_1}
We have $t_1\big(\widehat G(\mathbf{X}_n \times \mathbf{Y}_n  ) \big) \overset{\mathcal{D}}{\longrightarrow } t_1\big(\widehat G(\mathbf{X} \times \mathbf{Y}  ) \big)$.
\end{lemma}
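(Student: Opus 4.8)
The plan is to obtain the assertion from the convergence $\widehat G(\mathbf X_n\times\mathbf Y_n)\overset{\mathcal D}{\longrightarrow}\widehat G(\mathbf X\times\mathbf Y)$ in $M_p(\R_+)$ recorded in \eqref{eq_conv_G_hat_X_times_Y}, by rewriting $\{t_1(\cdot)\le t\}$ in terms of the evaluation functional $\chi\mapsto\chi([0,t])$ and transferring the convergence through this functional, which is continuous at a set of point measures of full measure for the limit law. As a first step I would record that, almost surely, $\xi:=\widehat G(\mathbf X\times\mathbf Y)=\sum_{i,j\ge1}\varepsilon_{G(\mathcal X_i,\mathcal Y_j)}$ is a locally finite point measure on $\R_+$: by the bound $G(x,y)\ge\frac{1-\con}{2}(x_1+y_1)$ from the proof of \autoref{lemma_continuity_of_G_hat}, together with $x_1,y_1\ge0$ on the state spaces, one has $G^{-1}([0,t])\subset\{x_1\le c\}\times\{y_1\le c\}$ with $c:=\frac{2t}{1-\con}$, so that $\xi([0,t])\le\mathbf X(\{x_1\le c\})\,\mathbf Y(\{y_1\le c\})<\infty$ a.s., each factor being Poisson with the finite parameter $p_i\,m_d\big(P(H_i)\cap\{z_1\le c\}\big)$. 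On the other hand $m_d(P(H_\ell))=m_d(P(H_r))=\infty$, so $\mathbf X$ and $\mathbf Y$ carry infinitely many points and $\xi(\R_+)=\infty$ a.s.; hence $t_1(\xi)=\min_{i,j\ge1}G(\mathcal X_i,\mathcal Y_j)$ is a.s.\ attained and finite, it is strictly positive once we know that $\xi$ has no atom at $0$ (part of the next step), and $\{t_1(\xi)\le t\}=\{\xi([0,t])\ge1\}$ for every $t\ge0$; the same identity holds trivially for the finite point measure $\widehat G(\mathbf X_n\times\mathbf Y_n)$.

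Fix $t>0$. Since $[0,t]$ is a relatively compact subset of $\R_+$ with boundary $\{0,t\}$, the evaluation $\chi\mapsto\chi([0,t])$, $M_p(\R_+)\to\{0,1,2,\dots\}$, is continuous at every $\chi$ with $\chi(\{0,t\})=0$ --- this is the standard fact that vague convergence of point measures implies convergence of masses on relatively compact continuity sets, see \cite[Chapter 3]{Resnick1987}. Granting $\P(\xi(\{s\})=0)=1$ for every fixed $s\ge0$ (established below), the discontinuity set of this evaluation is $\xi$-null, so the continuous mapping theorem applied to \eqref{eq_conv_G_hat_X_times_Y} yields $\widehat G(\mathbf X_n\times\mathbf Y_n)([0,t])\overset{\mathcal D}{\longrightarrow}\xi([0,t])$ as $\{0,1,2,\dots\}$-valued random variables, and hence $\P\big(\widehat G(\mathbf X_n\times\mathbf Y_n)([0,t])\ge1\big)\to\P\big(\xi([0,t])\ge1\big)$. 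By the identities of the first paragraph this is $\P\big(t_1(\widehat G(\mathbf X_n\times\mathbf Y_n))\le t\big)\to\P\big(t_1(\xi)\le t\big)=:F(t)$ for every $t>0$; combined with $\P\big(t_1(\widehat G(\mathbf X_n\times\mathbf Y_n))\le t\big)=0$ for $t<0$ and the squeeze $\limsup_n\P\big(t_1(\widehat G(\mathbf X_n\times\mathbf Y_n))\le0\big)\le\inf_{\varepsilon>0}F(\varepsilon)=F(0)=0$ (using $\xi(\{0\})=0$ a.s.), this gives convergence of the distribution functions at every real point, i.e.\ $t_1(\widehat G(\mathbf X_n\times\mathbf Y_n))\overset{\mathcal D}{\longrightarrow}t_1(\widehat G(\mathbf X\times\mathbf Y))$.

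The step I expect to be the main obstacle is the claim $\P(\xi(\{s\})=0)=1$ for fixed $s\ge0$. I would deduce it from a first-moment computation for the bilinear process: conditioning on $\mathbf X$ and applying Campbell's theorem (see \cite{Last2017}) first to the Poisson process $\mathbf Y\overset{\mathcal D}{=}\PRM\big(p_r\cdot m_d|_{P(H_r)}\big)$ and then to $\mathbf X\overset{\mathcal D}{=}\PRM\big(p_\ell\cdot m_d|_{P(H_\ell)}\big)$ gives
\[
\E\big[\xi(\{s\})\big]=\E\Big[\sum_{i,j\ge1}\ind\{G(\mathcal X_i,\mathcal Y_j)=s\}\Big]=p_\ell p_r\,m_{2d}\big(\{(x,y)\in P(H_\ell)\times P(H_r):G(x,y)=s\}\big).
\]
The set on the right is $m_{2d}$-null: in $G(x,y)=x_1+y_1-\frac1{4a}|\widetilde x-\widetilde y|^2=s$ the coordinate $x_1$ is uniquely determined by $(\widetilde x,y)$, so $\{G=s\}$ is the graph of a function on $\R^{2d-1}$ and carries no $2d$-dimensional Lebesgue mass. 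Hence $\E[\xi(\{s\})]=0$, so $\xi(\{s\})=0$ a.s., which is exactly what the previous two paragraphs require --- in particular for $s=0$, giving $F(0)=0$ and the positivity of $t_1(\xi)$. The local finiteness and the infinitude of the atoms of $\xi$ were already settled in the first paragraph, so this completes the argument.
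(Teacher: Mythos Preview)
Your argument is correct and follows essentially the same route as the paper: establish that $\xi:=\widehat G(\mathbf X\times\mathbf Y)$ has no fixed atoms, deduce $\xi_n([0,t])\overset{\mathcal D}{\longrightarrow}\xi([0,t])$ from \eqref{eq_conv_G_hat_X_times_Y}, and translate this into convergence of $\P(t_1\le t)$. The only notable difference is in the no-atom step: the paper fixes $y^*\in\widehat\con\cdot P(H_r)$, observes that the slice $A(y^*)=\{x:G(x,y^*)=t\}$ is Lebesgue-null so $\mathbf X(A(y^*))=0$ a.s., and then passes to the countable atoms of $\mathbf Y$; you instead compute $\E[\xi(\{s\})]$ directly via Campbell's theorem and show the level set $\{G=s\}$ is $m_{2d}$-null. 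Both arguments are short and yield the same conclusion; your first-moment version is arguably more self-contained, while the paper's slicing avoids invoking Campbell.
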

\begin{proof}
 In a first step we will show that $\widehat G(\mathbf{X} \times \mathbf{Y}  ) \big(\left\{ t\right\}\big) = 0$ almost surely for each $t \ge 0$. For this purpose, we consider the set
$$
G^{-1}\big( \left\{ t\right\}\big)
= \left\{ (x,y) \in \widehat \con \cdot P(H_\ell) \times \widehat \con \cdot P(H_r): x_1 + y_1 - \frac{1}{4a}|\widetilde x - \widetilde y|^2 = t\right\}.
$$
For some fixed $y^{*} \in \widehat \con \cdot P(H_r)$ we define
$
A(y^{*}):=   \big\{ x \in \widehat \con \cdot P(H_\ell): (x,y^{*}) \in G^{-1}( \left\{ t\right\})\big\}
$
and obtain
$$
  A(y^{*})
  \phantom{:}=  \ \left\{ x \in \widehat \con \cdot P(H_\ell): x_1 + y_1^{*} - \frac{1}{4a}|\widetilde x - \widetilde y^{*}|^2 = t\right\}
=  \ \left\{ x \in \widehat \con \cdot P(H_\ell): \sqrt{4a\big(x_1 - (t- y_1^{*})\big)} = |\widetilde x - \widetilde y^{*}|\right\}.
$$
Since the set $A(y^{*})$ has Lebesgue-measure $0$, we can conclude that $\mathbf{X}\big(A(y^{*})\big) = 0 $ almost surely for each $y^{*} \in \widehat \con \cdot P(H_r)$. This result implies $\widehat G(\mathbf{X} \times \mathbf{Y}  ) \big(\left\{ t\right\}\big) = 0$ almost surely for each $t \ge 0$.
In the following, we will write $\xi := \widehat G(\mathbf{X} \times \mathbf{Y}  ) $ and $\xi_n :=\widehat G(\mathbf{X}_n \times \mathbf{Y}_n  )$ for $n \in \N$.
In view of \eqref{eq_conv_G_hat_X_times_Y}, the first part of this proof and Theorem 16.16 in~\cite{Kallenberg2002}, the convergence $\xi_n\big([0,t]\big) \overset{\mathcal{D}}{\longrightarrow } \xi\big([0,t]\big)$ holds true for each $ t >0$. Since $\xi_n$ and $\xi$ are point processes, $1/2$ is a point of continuity of the distribution functions of both $\xi_n\big([0,t]\big)$ and $\xi\big([0,t]\big)$, and we obtain
$$
\P\Big( \xi_n\big([0,t]\big) = 0\Big) = \P\left( \xi_n\big([0,t]\big) \le  \frac{1}{2}\right)
\to \P\left( \xi\big([0,t]\big) \le  \frac{1}{2}\right) = \P\Big( \xi\big([0,t]\big) = 0\Big)
$$
for each $ t >0$. Thus, we have
\begin{align*}
\P\big(t_1(\xi_n) \le t\big)
=& \; 1 - \P\big(t_1(\xi_n) > t\big) \\
=& \; 1 - \P\Big( \xi_n\big([0,t]\big) = 0\Big)\\
\to& \; 1 - \P\Big( \xi\big([0,t]\big) = 0\Big)\\
=& \; 1 - \P\big(t_1(\xi) > t\big) \\
=& \; \P\big(t_1(\xi) \le t\big).
\end{align*}
\qed
\end{proof}

\section{Generalizations 1 - Sets with unique diameter}
\label{sec_generalizations_unique_diameter}
This section deals with some obvious generalizations of \autoref{thm_main_result}. \autoref{subsec_more_general_densities_in_ellipsoids} is devoted to more general densities than those covered by \autoref{cond_density_p_l_p_r} in \autoref{sec_main_results}. Being more precise, we will investigate densities supported by ellipsoids that are allowed to tend to $0$ or $\infty $ close to the poles. It will turn out that the so-called Pearson Type II distributions are special distributions covered by this setting.
\autoref{subsec_joint_convergence_k_largest_distances} establishes a limit theorem for the joint convergence of the $k $ largest distances among the random points in the settings of both \autoref{sec_main_results} and \autoref{subsec_more_general_densities_in_ellipsoids}.
Moreover, \autoref{subsec_p_pllipsoids_p_norms} deals with $p$-superellipsoids and $p$-norms, where $1 \le p < \infty $. If the underlying $p$-superellipsoid has a unique diameter with respect to the $p$-norm and we use this norm to define the largest distance among the random points, we obtain very similar results as seen in \autoref{sec_main_results}.

\subsection{More general densities supported by ellipsoids}
\label{subsec_more_general_densities_in_ellipsoids}

In this section we consider closed ellipsoids
\begin{equation}
\label{eq_def_open_ellipsoid_Pearson}
E := \left\{z \in \R^d:  \sum_{k=1}^{d}\left( \frac{z_k}{a_k} \right)^2 \le 1\right\},
\end{equation}
with half axes $a_1 > a_2 \ge \ldots \ge a_d>0$, seen before in \autoref{cor_Ellipsoid}, and we define $\Sigma := \diag(a_1^2,\ldots,a_d^2) \in \R^{d\times d}$. Inside of these ellipsoids we  consider densities that satisfy the following condition:
\begin{condition}
\label{cond_f_gen_density_in_ellipsoid}
We assume $f: E\to \R_{+}$, $\int_{E}f(z)\,\mathrm{d}z =1 $ and that there are constants $\alpha_\ell,\alpha_r > 0$ and $\beta_\ell,\beta_r >-1$ so that the function
$$
z \mapsto \frac{f(z)}{\alpha_i\left(1- z^\top \Sigma^{-1} z\right)^{\beta_i}},
$$
that maps from $\text{int}(E)$ into $\R_{+}$,
can be extended continuously at the poles $(-a,\mathbf{0} )$ and $(a,\mathbf{0} )$ with value $1$. Thereby, $\alpha_\ell, \beta_\ell$ correspond to the left pole $(-a,\mathbf{0} )$ and $\alpha_r, \beta_r$ to the right pole $(-a,\mathbf{0} )$, respectively.
\end{condition}
\vspace{2mm}

Notice that \autoref{cond_density_p_l_p_r} was a special case of this condition, namely for $\beta_i = 0$ and with $\alpha_i = p_i$, $i \in \left\{ \ell,r\right\}$ (observe that we can use $E$ instead of $\text{int}(E)$ in this case).
The crucial difference to the setting of \autoref{thm_main_result} occurs in \autoref{lemma_transformation_density}. Before we state the main result of this section, which is \autoref{thm_main_result_gen_density_in_ellipsoid}, we will point out this essential difference.
As already seen in \autoref{cor_Ellipsoid}, we have
\begin{align*}
H_\ell = H_r = \diag\left( \frac{a_1}{a_2^2}\ ,\ \ldots\ ,\ \frac{a_1}{a_d^2}\right),
\end{align*}
and because of this symmetry, we briefly write $H := H_\ell = H_r$.
Remember now the construction of $P_1(H)$ given at the beginning of \autoref{subsec_proof_main_thm_geom_considerations}. In this section, we use the same construction for $\text{int}(E)$ instead of $E$ to avoid divisions by $0$ for $\beta<0$, and we conclude that
\begin{align*}
P_1(H)\
= &\ \left\{z \in \R^d:  \left( \frac{z_1-a_1}{a_1} \right)^2 + \sum_{k=2}^{d}\left( \frac{z_k}{a_k} \right)^2 < 1, z_1 < a_1\right\}
=  \ \left\{z \in \R^d:   \sum_{k=2}^{d}\left( \frac{z_k}{a_k} \right)^2 < \frac{2z_1}{a_1} - \left( \frac{z_1}{a_1}\right)^2, z_1 < a_1\right\}.
\end{align*}
Since we only consider distributions of $Z$ that are absolutely
continuous with respect to Lebesgue measure, this is no restriction at all.
To show an adjusted version of \autoref{lemma_transformation_density}, we have, in generalization of~\eqref{eq_def_of_nu}, to define the constant
$$
\nu := \frac{1}{d+1+2\beta},
$$
$\beta>-1$, the rescaling function
$$
T_n(z) := \left( \ n^{2\nu}z_1\ ,\ n ^{\nu}\widetilde z\ \right)
$$
for $n \in \N$, $ z = (z_1,\widetilde z) \in \R^d$ and the (now open) limiting set
$$
 P(H)  := \ \left\{ z \in \R^d:  \sum_{k=2}^{d}\left( \frac{z_k}{a_k}\right)^2 <  \frac{2z_1}{a_1}\right\}.
$$
Now we can state an adapted version of \autoref{lemma_transformation_density}.
\begin{lemma}
\label{lemma_transformation_density_gen_density}
Suppose the random vector $\ZVhelp = (\ZVhelp_1,\ldots,\ZVhelp_d)$ has a density $g$ on $P_1(H)$ satisfying
\begin{align*}
g(z)
& =
\left( 1+ o(1)\right)\cdot \alpha \cdot \left( 1 - \left( \frac{z_1-a_1}{a_1}\right)^2 - \sum_{k=2}^{d}\left( \frac{z_k}{a_k}\right)^2 \right)^{\beta},
\label{eq_Lemma_Trans_gen_density_Condition}
\end{align*}
uniformly on $P_1(H) \cap \left\{ z_1 \le \delta\right\}$ as $\delta \to 0$, for some $\alpha>0$ and $ \beta > -1$.
Then, for every bounded Borel set $\boundSet \subset \R^d$, we have
\begin{equation*}
\label{eq_gen_density_P_T_n_V_in_B}
\P\big( T_n(\ZVhelp) \in \boundSet \big)= \frac{\alpha}{n}\cdot\kappa_n(\boundSet)
\end{equation*}
with $\kappa_n(\boundSet) \to \Lambda_{\beta}(\boundSet)$ and
\begin{equation*}\label{eq_Def_Lambda_beta}
\Lambda_{\beta}(\boundSet)
:=  \int_{\boundSet} \left( \frac{2z_1}{a_1} -  \sum_{k=2}^{d}\left( \frac{z_k}{a_k}\right)^2 \right)^{\beta} \ind\left\{ z \in P(H)\right\}\,\mathrm{d}z.
\end{equation*}
\end{lemma}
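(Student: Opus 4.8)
The plan is to follow the proof of \autoref{lemma_transformation_density} line by line, the only genuinely new ingredient being the power $(\cdot)^{\beta}$ in the density. First I would compute the Jacobian of the rescaling map: since now $\nu = 1/(d+1+2\beta)$, one gets $\Delta T_n(z) = n^{(d+1)\nu}$, which — in contrast to the situation in \autoref{lemma_transformation_density} — is \emph{not} equal to $n$. Hence $T_n(\ZVhelp)$ has density $g_n(z) = n^{-(d+1)\nu}\,g\big(T_n^{-1}(z)\big)\,\ind\{z \in P_n(H)\}$, where $P_n(H) := T_n\big(P_1(H)\big)$. Using the elementary identity $1 - \big((w_1-a_1)/a_1\big)^2 = 2w_1/a_1 - (w_1/a_1)^2$ at $w = T_n^{-1}(z)$, together with the description of $P_1(H)$ given before the lemma, a short calculation yields $P_n(H) = \big\{z \in \R^d : \sum_{k=2}^{d}(z_k/a_k)^2 < 2z_1/a_1 - z_1^2/(a_1^2 n^{2\nu}),\ z_1 < n^{2\nu}a_1\big\}$, from which $\ind\{z \in P_n(H)\} \to \ind\{z \in P(H)\}$ for Lebesgue-a.e.\ $z$ (the exceptional set being contained in the null set $\partial P(H)$ together with a shell around it whose measure tends to $0$).

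Next I would substitute the asymptotic form of $g$ into $\P\big(T_n(\ZVhelp) \in \boundSet\big) = \int_\boundSet g_n(z)\,\mathrm{d}z$ for a bounded Borel set $\boundSet$. Writing $i_1 := \sup\{z_1 : z \in \boundSet\} < \infty$ we have $T_n^{-1}(\boundSet) \subset P_1(H) \cap \{z_1 \le i_1/n^{2\nu}\}$ with $i_1/n^{2\nu} \to 0$, so the hypothesis on $g$ applies with an $o(1)$ that is \emph{uniform} over $\boundSet$. Pulling the factor $n^{-2\nu}$ out of the $\beta$-th power, the powers of $n$ combine to $n^{-(d+1)\nu}\cdot n^{-2\nu\beta} = n^{-(d+1+2\beta)\nu} = n^{-1}$ — this cancellation is precisely why $\nu$ is defined as it is — and one arrives at $\P\big(T_n(\ZVhelp) \in \boundSet\big) = \frac{\alpha}{n}\,\kappa_n(\boundSet)$ with $\kappa_n(\boundSet) = (1+o(1))\int_\boundSet b_n(z)^{\beta}\,\ind\{z \in P_n(H)\}\,\mathrm{d}z$, where $b_n(z) := 2z_1/a_1 - z_1^2/(a_1^2 n^{2\nu}) - \sum_{k=2}^{d}z_k^2/a_k^2 \to b(z) := 2z_1/a_1 - \sum_{k=2}^{d}z_k^2/a_k^2$ pointwise. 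It then remains to justify interchanging limit and integral so that $\kappa_n(\boundSet) \to \int_\boundSet b(z)^{\beta}\,\ind\{z \in P(H)\}\,\mathrm{d}z = \Lambda_\beta(\boundSet)$.

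If $\beta \ge 0$ this is immediate: on $P_n(H) \cap \boundSet$ one has $0 \le b_n(z) \le 2z_1/a_1 \le 2i_1/a_1$, so the integrand is bounded on $\boundSet$ and dominated convergence applies exactly as in \autoref{lemma_transformation_density}. The case $-1 < \beta < 0$ is where the real work lies, since then the integrand is singular along the moving boundary $\partial P_n(H)$. Here I would fix a small $\varepsilon > 0$ and split $\boundSet$ into $\{b > \varepsilon\}$ and $\{0 < b \le \varepsilon\}$; nothing is contributed where $b \le 0$, because $b_n \le b$ forces $z \notin P_n(H)$ there. On $\{b > \varepsilon\}$ the integrand is bounded by $(\varepsilon/2)^{\beta}$ for large $n$, $b_n \to b$ and $\ind_{P_n(H)} \to 1$, so dominated convergence gives convergence of that part to $\int_{\boundSet \cap \{b > \varepsilon\}} b^{\beta}\,\mathrm{d}z$. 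On $\{0 < b \le \varepsilon\}$ I would pass to the coordinates $w_k = z_k/a_k$ and then to polar coordinates in $w$; writing $\rho_n^2 := 2z_1/a_1 - z_1^2/(a_1^2 n^{2\nu})$, the inner integral of $(\rho_n^2 - |w|^2)^{\beta}$ over the relevant part of $P_n(H)$ is bounded, uniformly in $n$, by a constant times $\varepsilon^{\beta+1}$ on the range of $z_1$ where $\rho_n$ stays away from $0$ and by a constant times $\rho_n^{2\beta+d-1}$ on the remaining small range of $z_1$; integrating in $z_1$ and invoking $\beta > -1$ and $d \ge 2$ yields $\limsup_n \int_{\boundSet \cap \{0 < b \le \varepsilon\}} b_n^{\beta}\,\ind_{P_n(H)}\,\mathrm{d}z \le C\big(\varepsilon^{\beta+1} + \varepsilon^{(d+1+2\beta)/2}\big)$, a sum of positive powers of $\varepsilon$. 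The very same estimates show $\Lambda_\beta(\boundSet) < \infty$, i.e.\ that the limiting integrand is locally integrable, again precisely because $\beta > -1$. Letting $\varepsilon \to 0$ then finishes the proof. The main obstacle is exactly this last step: taming the singularity of the density over the family of domains $P_n(H)$ uniformly in $n$, which is why the bound $\beta > -1$ enters and cannot be relaxed.
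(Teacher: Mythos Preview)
Your computation of the Jacobian, the identification of $P_n(H)$, the extraction of the factor $n^{-1}$ via $n^{-(d+1)\nu}\cdot n^{-2\nu\beta}=n^{-1}$, and the pointwise convergence $b_n(z)^\beta\ind_{P_n(H)}\to b(z)^\beta\ind_{P(H)}$ are all correct and match the paper's setup. For $\beta\ge 0$ your dominated-convergence argument is exactly what is needed.

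For $-1<\beta<0$ your approach is correct but \emph{genuinely different} from the one the paper indicates. The paper does not give the argument in full but states that neither dominated nor monotone convergence applies and that one should invoke \emph{Scheff\'e's Lemma}: one first computes the \emph{total} integral $\int_{\{z_1\le c\}} b_n^{\beta}\ind_{P_n(H)}\,\mathrm{d}z$ exactly via the Beta function (the radial integral over the full disc $\{r<\rho_n\}$ equals a constant times $\rho_n^{2\beta+d-1}$), shows that it converges to $\int_{\{z_1\le c\}} b^{\beta}\ind_{P(H)}\,\mathrm{d}z$ by a one-dimensional DCT in the variable $z_1$, and then Scheff\'e immediately yields $L^1$-convergence on $\{z_1\le c\}$ and hence convergence of the integral over any Borel subset $\boundSet$. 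This avoids any near-boundary estimates entirely.

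Your direct $\varepsilon$-splitting works too, but note that for $d=2$ the per-slice bound on the ``bulk'' range is $C\varepsilon^{\beta+1}/\rho$ rather than $C\varepsilon^{\beta+1}$ (because $r^{d-2}=1$ gives no help and the substitution produces a factor $(\rho_n^2-u)^{-1/2}\sim\rho^{-1}$); the $1/\rho=(a_1/2z_1)^{1/2}$ is still $z_1$-integrable, so your final bound survives. The Scheff\'e route is shorter and sidesteps these dimension-dependent case distinctions; your route is more elementary and self-contained.
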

The proof of this lemma is very technical since we can (in general) neither apply the dominated convergence theorem, nor the monotone convergence theorem to show $\kappa_n(\boundSet) \to \Lambda_{\beta}(\boundSet)$. Instead, an application of Scheff\'e's Lemma is necessary, see \cite{Schrempp2017} for more details and for the connection between this lemma and the following result.

\begin{theorem}
\label{thm_main_result_gen_density_in_ellipsoid}
Let the density $f$ be supported by the ellipsoid $E$ with half-axes $a_1 > a_2 \ge \ldots \ge a_d>0$ and satisfy \autoref{cond_f_gen_density_in_ellipsoid} with $\beta_\ell = \beta_r =:\beta$. We then have
\begin{equation*}
n^{\frac{2}{d+1+2\beta}}\big(2a_1 - \mathrm{diam}(\mathbf{Z}_n )\big) \overset{\mathcal{D}}{\longrightarrow } \min_{i,j \ge 1} \left\{ \mathcal{X}_{i,1}  + \mathcal{Y}_{j,1} - \frac{1}{4a_1}\big| \widetilde {\mathcal{X}}_{i}-\widetilde {\mathcal{Y}}_{j}\big|^2 \right\},
\end{equation*}
where $\left\{ \mathcal{X}_i, i \ge 1\right\} \overset{\mathcal{D}}{= }  \PRM\big(  \alpha_\ell\cdot \Lambda_{\beta}\big) $ and $\left\{ \mathcal{Y}_j, j \ge 1\right\} \overset{\mathcal{D}}{= }  \PRM\big(\alpha_r\cdot \Lambda_{\beta}\big) $ are independent Poisson processes. If \autoref{cond_f_gen_density_in_ellipsoid}  and  -- without loss of generality -- the inequality $\beta_\ell > \beta_r$ hold true, we obtain
\begin{equation*}
n^{\frac{2}{d+1+2\beta_{\ell}}}\big(2a_1 - \mathrm{diam}(\mathbf{Z}_n )\big) \overset{\mathcal{D}}{\longrightarrow } \min_{i \ge 1} \left\{ \mathcal{X}_{i,1} - \frac{1}{4a_1}\big| \widetilde  {\mathcal{X}}_{i}\big|^2 \right\},
\end{equation*}
with $\left\{ \mathcal{X}_i, i \ge 1\right\} \overset{\mathcal{D}}{= }  \PRM\big(  \alpha_\ell\cdot \Lambda_{\beta_\ell}\big) $.
The same results hold true if we replace $ \mathrm{diam}(\mathbf{Z}_n )$ with $M_n$.
\end{theorem}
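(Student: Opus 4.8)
The plan is to run the same three-step scheme as the proof of \autoref{thm_main_result} --- geometric reduction to the pole-caps, convergence of the rescaled Poisson processes, then continuous mapping together with the first-passage-time functional $t_1$ --- changing only the input transformation lemma. Since $E$ is the ellipsoid of \autoref{cor_Ellipsoid}, Conditions~\ref{cond_unique_diameter}--\ref{cond_A_eta_pos_semi_definite} hold automatically and $H_\ell=H_r=:H$; moreover $P_1(H)\subset P(H)$, so one may work directly on the open set $P(H)$, without the enlargement $P(H_i)\subset\widehat{\con}\cdot P(H_i)$. The geometric step carries over verbatim: reduction to $E_{\delta}$ for $\delta\in(0,\delta^*]$, the Taylor expansion $2a_1-|x-y|=\widetilde G(x,y)-R(x,y)$ with $R(x,y)=o\big(\widetilde G(x,y)\big)$ uniformly on $E_\delta$, where $\widetilde G(x,y)=(a_1+x_1)+(a_1-y_1)-\frac{1}{4a_1}|\widetilde x-\widetilde y|^2$. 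Hence it suffices to analyze $\min_{(i,j)\in I_n}n^{2\nu_\star}\widetilde G(X_i,Y_j)$ for a suitable exponent $\nu_\star$, with independent $X,Y$ whose densities are $f$ restricted and renormalized on the two pole-caps; the passage from $\diam(\mathbf Z_n)$ to $M_n$ is again Theorem~3.2 of~\cite{MayerMol2007}.

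First consider $\beta_\ell=\beta_r=:\beta$. Set $\nu:=1/(d+1+2\beta)$, $T_n(z)=(n^{2\nu}z_1,n^{\nu}\widetilde z)$ and $\nu_\star=\nu$. The only substitution is to use \autoref{lemma_transformation_density_gen_density} in place of \autoref{lemma_transformation_density}: it yields, exactly as in \autoref{lemma_Conv_of_V_n} and with $c_{\ell,\delta},c_{r,\delta}$ again canceling, that $\mathbf X_n:=\widetilde{\mathbf X}_n\circ T_n^{-1}$ and $\mathbf Y_n:=\widetilde{\mathbf Y}_n\circ T_n^{-1}$ converge in distribution to independent $\PRM(\alpha_\ell\Lambda_\beta)$ and $\PRM(\alpha_r\Lambda_\beta)$. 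Since $\Lambda_\beta$ is absolutely continuous with respect to Lebesgue measure on $P(H)$, the proof of \autoref{lemma_conv_of_t_1} (the set $A(y^*)$ still has Lebesgue measure $0$, hence $\Lambda_\beta$-measure $0$) goes through unchanged, and so does the proof of \autoref{lemma_continuity_of_G_hat} once one re-establishes the a priori bound $G(x,y)=x_1+y_1-\frac{1}{4a_1}|\widetilde x-\widetilde y|^2\ge\big(1-(a_2/a_1)^2\big)(x_1+y_1)\ge0$ on $P(H)\times P(H)$; this follows from $|\widetilde x-\widetilde y|^2\le2|\widetilde x|^2+2|\widetilde y|^2$, the inclusion $|\widetilde z|^2\le\frac{2a_2^2}{a_1}z_1$ valid on $P(H)$, and $a_2<a_1$. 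The continuous mapping theorem for $\widehat G$ followed by the $t_1$-argument then gives the first assertion.

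For $\beta_\ell>\beta_r$ the two poles must be rescaled at different rates. Put $\nu_\ell:=1/(d+1+2\beta_\ell)$, $\nu_r:=1/(d+1+2\beta_r)$ (so $\nu_\ell<\nu_r$), rescale the left pole-process by $T_n^{(\ell)}(z)=(n^{2\nu_\ell}z_1,n^{\nu_\ell}\widetilde z)$ and the right one by $T_n^{(r)}(z)=(n^{2\nu_r}z_1,n^{\nu_r}\widetilde z)$; by \autoref{lemma_transformation_density_gen_density} they converge to independent $\mathbf X=\PRM(\alpha_\ell\Lambda_{\beta_\ell})$ and $\mathbf Y=\PRM(\alpha_r\Lambda_{\beta_r})$. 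The smallest pole-cap deficit at the left pole is of order $n^{-2\nu_\ell}$, which dominates $n^{-2\nu_r}$, so the correct scaling is $n^{2\nu_\ell}$, and the elementary identity
$$
n^{2\nu_\ell}\widetilde G(X_i,Y_j)=G_0(A_i)+n^{2(\nu_\ell-\nu_r)}G_0(B_j)+\frac{n^{\nu_\ell-\nu_r}}{2a_1}\langle\widetilde A_i,\widetilde B_j\rangle,
$$
with $G_0(z)=z_1-\frac{1}{4a_1}|\widetilde z|^2$ and $A_i,B_j$ the atoms of $\mathbf X_n,\mathbf Y_n$, together with $n^{\nu_\ell-\nu_r}\to0$, reduces the claim to $\min_{(i,j)}[\,\cdot\,]\overset{\mathcal D}{\longrightarrow}\min_iG_0(\mathcal X_i)$. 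The upper bound follows by pairing each relevant $\mathbf X_n$-atom with the $\mathbf Y_n$-atom minimizing $G_0$, whose coordinates are bounded in probability, so the last two summands are negligible; the lower bound follows by using the a priori estimate $\widetilde G(x,y)\ge c\,(a_1+x_1)$ to restrict attention to atoms $A_i$ with bounded first coordinate and then showing that, uniformly over these, the contribution of the right pole vanishes asymptotically. One concludes with the $t_1$-argument, $\min_iG_0(\mathcal X_i)=\min_i\{\mathcal X_{i,1}-\frac{1}{4a_1}|\widetilde{\mathcal X}_i|^2\}$, and de-Poissonization.

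The main obstacle is precisely this last point: showing that the subordinate pole is asymptotically inert when $\beta_\ell\neq\beta_r$. The difficulty is that, after rescaling everything by the slow rate $n^{2\nu_\ell}$, the right pole-process is no longer locally finite in the limit, so one cannot simply invoke continuous mapping for the pair $(\mathbf X_n,\mathbf Y_n)$ as in the balanced case; instead one must exploit the precise Pearson-type decay near the right pole (through \autoref{lemma_transformation_density_gen_density}) together with the curvature bound of \autoref{lem_first_part_deriv_are_0_and_Hessian_pos_def} (all eigenvalues of $H$ exceed $1/(2a_1)$) to obtain a uniform quantitative estimate on the right-pole terms. This is of the same delicate, Scheff\'e-type nature as the proof of \autoref{lemma_transformation_density_gen_density} itself, and I would carry out the details following~\cite{Schrempp2017}.
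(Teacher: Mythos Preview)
For the balanced case $\beta_\ell=\beta_r$ your proposal coincides with the paper's proof: both say to rerun the proof of \autoref{thm_main_result} verbatim and swap \autoref{lemma_transformation_density} for \autoref{lemma_transformation_density_gen_density}. Your added remarks (that $P_1(H)\subset P(H)$ for the ellipsoid so no $\widehat\con$-enlargement is needed, the explicit bound $G(x,y)\ge(1-(a_2/a_1)^2)(x_1+y_1)$, and that $\Lambda_\beta\ll m_d$ so \autoref{lemma_conv_of_t_1} still applies) are correct and helpful.

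For the unbalanced case your route differs from the paper's. The paper rescales \emph{both} pole processes by the single slow map $T_n^\ell$; then $\mathbf X_n\to\PRM(\alpha_\ell\Lambda_{\beta_\ell})$ as before, while the right-pole process $\mathbf Y_n^\ell:=\widetilde{\mathbf Y}_n\circ(T_n^\ell)^{-1}$, being rescaled ``too slowly'', piles up mass near the origin, and the paper refers to \cite{Schrempp2017} for the remaining technicalities. You instead rescale each pole by its own natural map and work with the clean algebraic identity
\[
n^{2\nu_\ell}\widetilde G(X_i,Y_j)=G_0(A_i)+n^{2(\nu_\ell-\nu_r)}G_0(B_j)+\tfrac{n^{\nu_\ell-\nu_r}}{2a_1}\langle\widetilde A_i,\widetilde B_j\rangle .
\]
Your upper bound (pair each relevant $A_i$ with a fixed $B_j$ having $O_P(1)$ coordinates) is fine. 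The lower bound, however, is not settled by what you wrote. The a~priori estimate $\widetilde G(x,y)\ge c\,(a_1+x_1)$ only yields $\min_{i,j}\ge c\,\min_i A_{i,1}$, which is weaker than $\min_i G_0(A_i)$. More seriously, for a fixed $A_i$ with $A_{i,1}$ bounded, the infimum over $y\in P(H)$ of the ``right-pole contribution'' is
\[
\inf_{y\in P(H)}\Bigl\{G_0(y)+\tfrac{1}{2a_1}\langle\widetilde A_i,\widetilde y\rangle\Bigr\}
=-\tfrac{1}{8a_1^2}\,\widetilde A_i^{\!\top}\bigl(H-\tfrac{1}{2a_1}\mathrm I_{d-1}\bigr)^{-1}\widetilde A_i<0
\]
whenever $\widetilde A_i\neq0$ (the eigenvalue bound of \autoref{lem_first_part_deriv_are_0_and_Hessian_pos_def} makes $H-\tfrac{1}{2a_1}\mathrm I_{d-1}$ positive definite, and the minimizing $y$ lies on $\partial P(H)$). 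Since after your rescaling the $B_j$-process is still supported out to first coordinate $n^{2\nu_r}\delta$, atoms at the dangerous scale $B_{j,1}\sim n^{2(\nu_r-\nu_\ell)}$ are in play, so one cannot simply declare that ``the contribution of the right pole vanishes''. A genuine argument is needed here---exactly the obstacle you yourself flag in the last paragraph---and, as in the paper, the details live in \cite{Schrempp2017}. In short: your framework is a legitimate alternative to the paper's single-rescaling approach, but in the unbalanced case both reduce to the same delicate estimate on the subordinate pole, and your sketch of that estimate has a gap at the lower bound.
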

\vspace{2mm}
\begin{proof}
Under \autoref{cond_f_gen_density_in_ellipsoid} we have $f(z)>0$ for each $z$ arbitrarily close to one of the poles. In the case $\beta_\ell = \beta_r$, this inequality allows us to copy the proof of \autoref{thm_main_result} almost completely. The only difference is that we have to apply \autoref{lemma_transformation_density_gen_density} instead of \autoref{lemma_transformation_density} to show an adapted version of \autoref{lemma_Conv_of_V_n}. In the case $\beta_\ell>\beta_r$ we will observe a higher magnitude of points lying close to the right pole than to the left.
This higher magnitude has far-reaching implications for the proof to follow.
First of all, we define
$$
\nu_\ell:=\frac{1}{d+1+2\beta_\ell}, \qquad
T_n^{\ell}(z) := \left( \ n^{2\nu_\ell}z_1\ ,\ n ^{\nu_\ell}\widetilde z\  \right)
\qquad \text{and} \qquad
P_n^{\ell}(H) := T_n^{\ell}\big(P_1(H)\big).
$$
The beginning of the main part of the proof of \autoref{thm_main_result} in \autoref{subsubsec_main_part_proof_main_thm} can be copied in this case, too. We will only point out the main difference.
Let $N_{r,n}$ and $ Y_1,Y_2,\ldots $ be defined as in the proof of \autoref{thm_main_result} and write $\ZVhelp^r := (a-Y_{1,1},\widetilde Y_1)$. Then,
$$
\widetilde{\mathbf{Y} }_n := \sum_{j=1}^{N_{r,n}}\varepsilon_{(a-Y_{j,1},\widetilde Y_{j})}
$$
is a Poisson process with intensity measure $nc_{r,\delta}\cdot \P_{\ZVhelp^r}$, and
$\mathbf{Y}_n^{\ell} := \mathbf{\widetilde Y}_n \circ (T_n^{\ell})^{-1}$ -- taking the part of $\mathbf{Y}_n $ in the proof of \autoref{thm_main_result} --  is a Poisson process with intensity measure $\widehat \mu_n := nc_{r,\delta} \cdot \P_{\ZVhelp^r} \circ (T_n^{\ell})^{-1}$. The density $f$ fulfills \autoref{cond_f_gen_density_in_ellipsoid} at the right pole with power $\beta_r$, but the shifted process $\widetilde{\mathbf{Y} }_n $ is scaled via $T_n^\ell$, which depends on $\beta_\ell$, \emph{not} on $ \beta_r$. Broadly speaking, this `wrong' (too slow) scaling has the effect, that $\mathbf{Y}_n^{\ell} $ will generate more and more points arbitrarily close to $\mathbf{0} $, see \cite{Schrempp2017} for technical details.
\qed
\end{proof}

\begin{example}
\label{ex_pearson_type_II}
We now consider the so-called $d$-dimensional symmetric multivariate Pearson Type II distributions supported by an ellipsoid with half-axes $a_1 > a_2 \ge \ldots \ge a_d>0$, where $d \ge 2$. According to equation $($2.43$)$ in~\cite{Fang1990} and Example 2.11 in the same reference, we know that the corresponding densities are given by
\begin{align*}
  f_{\beta}(z)
&=\frac{\Gamma\left( \frac{d}{2}+\beta + 1\right)}{\Gamma\left( \beta +1\right)\pi^{\frac{d}{2}}\prod_{i=1}^{d}a_i}\left(1-z^\top \Sigma^{-1}z\right)^{\beta}\cdot \ind\left\{ z \in \text{int}(E)\right\}.
\end{align*}
Hence, \autoref{cond_f_gen_density_in_ellipsoid} holds true with $\beta_\ell = \beta_r = \beta$ and
$
\alpha:=\alpha_\ell = \alpha_r = \frac{\Gamma\left( \frac{d}{2}+\beta + 1\right)}{\Gamma\left( \beta +1\right)\pi^{\frac{d}{2}}\prod_{i=1}^{d}a_i},
$
so that we can apply \autoref{thm_main_result_gen_density_in_ellipsoid}.
\end{example}

Figures~\ref{fig_pearson_beta_minus_0_5} and \ref{fig_pearson_beta_2} illustrate the densities $f_{\beta}$ and the corresponding densities of the intensity measures $\alpha \cdot \Lambda_\beta$ in the setting of \autoref{ex_pearson_type_II} for $d=2$, $a_1 = 1, a_2 = 1/2$ and $\beta \in \left\{ -1/2, 2\right\}$.
See \cite{Schrempp2017} for the illustration of some more Pearson Type II densities in two dimensions and the results of a simulation study.

\begin{figure}[!ht]
\vspace{5mm}
\begin{center}
\begin{minipage}{0.4\textwidth}
\hspace{-13mm}
\includegraphics[width=0.6\textwidth, angle=-90,trim = 60 0 100 0 ] {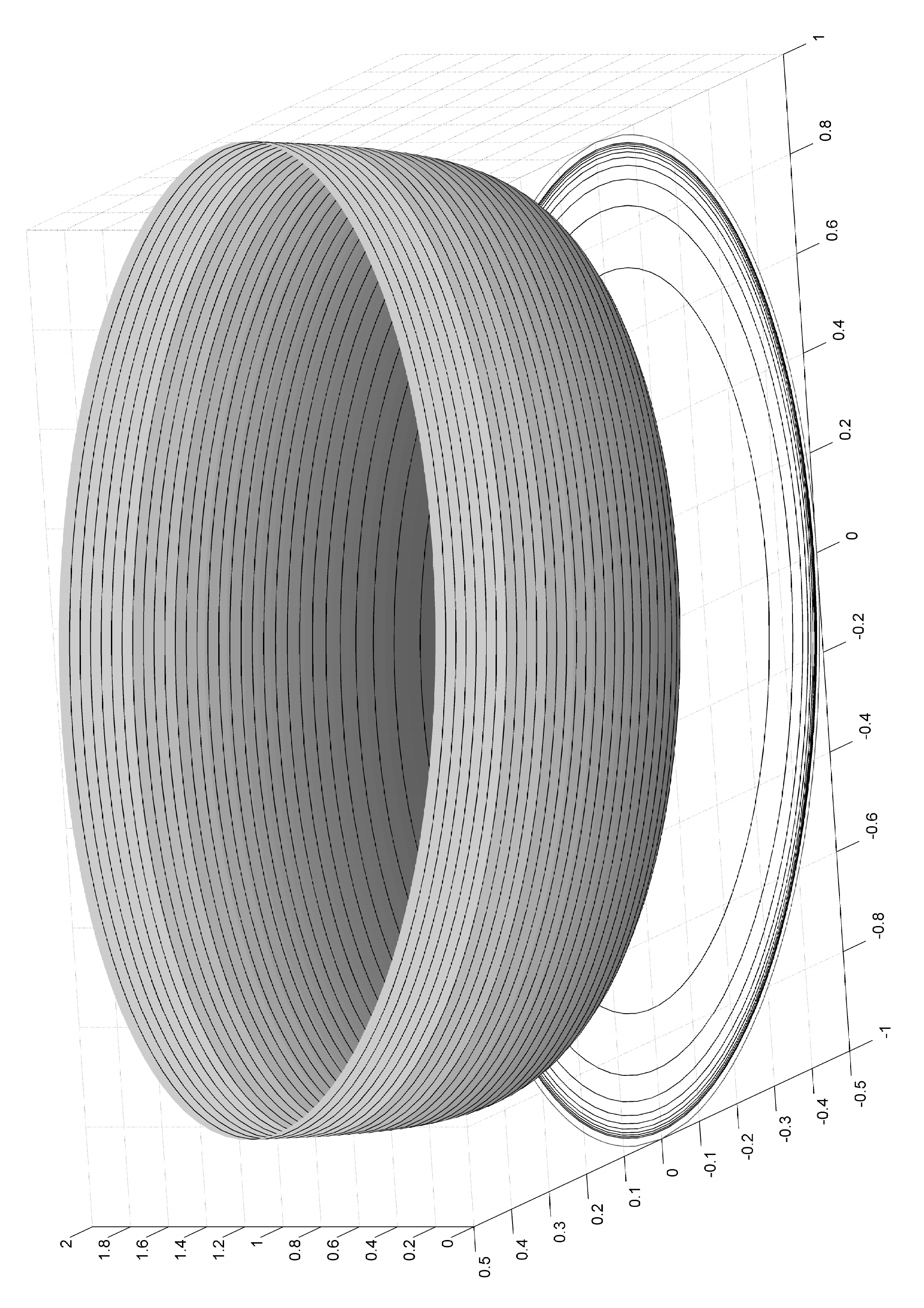} \\
\end{minipage} \hspace{4mm}
\begin{minipage}{0.4\textwidth}
\hspace{-5mm}
\includegraphics[width=0.6\textwidth, angle=-90,trim = 60 0 100 0 ] {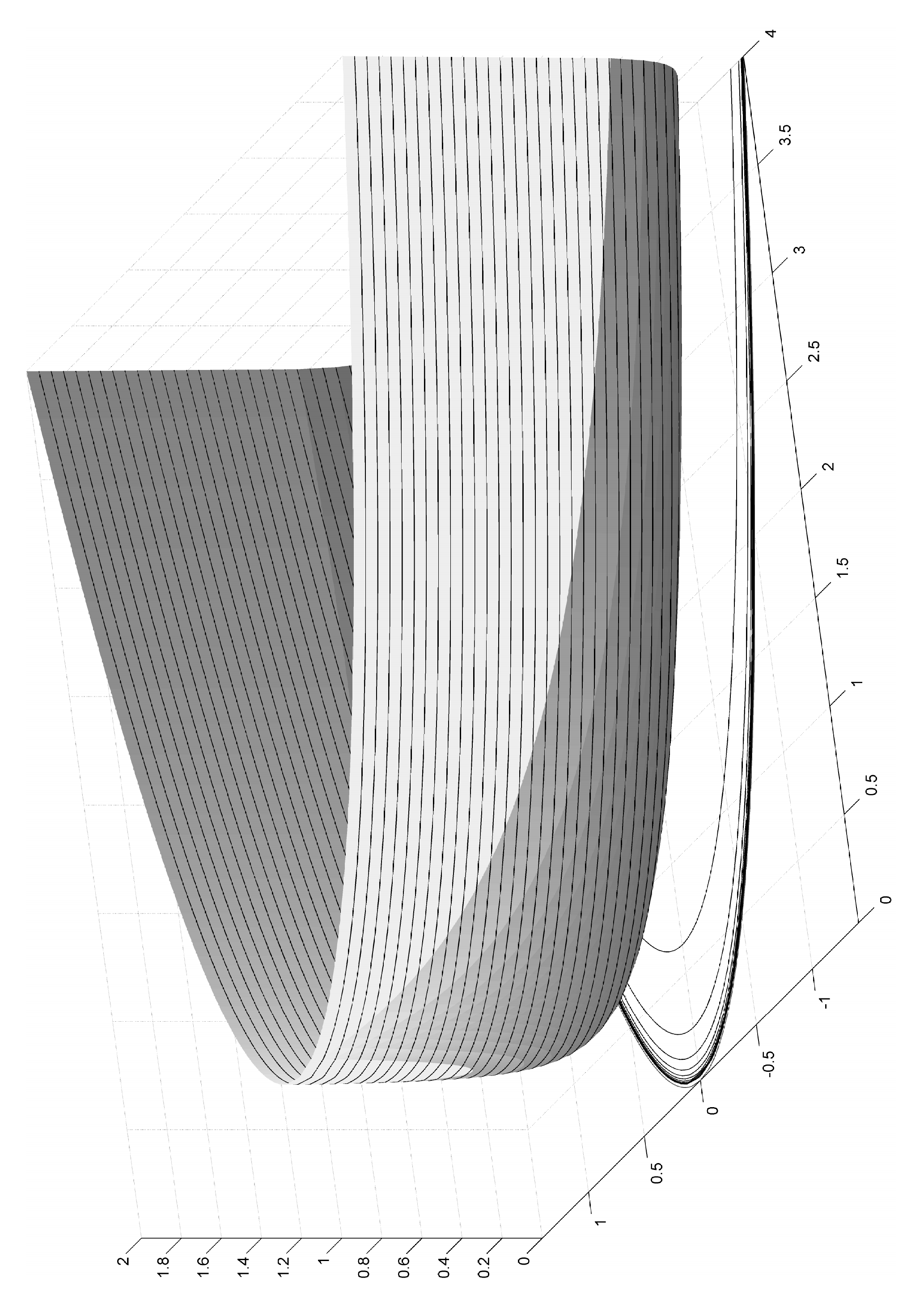}\\
\begin{center}
\vspace{-3mm}
\end{center}
\end{minipage}\\[2mm]
\end{center}
\caption{The density $f_{\beta}$ (left) and that of the intensity measure $\alpha \cdot \Lambda_\beta$ (right) in the setting of \autoref{ex_pearson_type_II} for $d=2$ with $a_1 =1, a_2 = 1/2$ and $\beta = -1/2$.}
\label{fig_pearson_beta_minus_0_5}
\end{figure}

\begin{figure}[!ht]
\vspace{5mm}
\begin{center}
\begin{minipage}{0.4\textwidth}
\hspace{-13mm}
\includegraphics[width=0.6\textwidth, angle=-90,trim = 60 0 100 0 ] {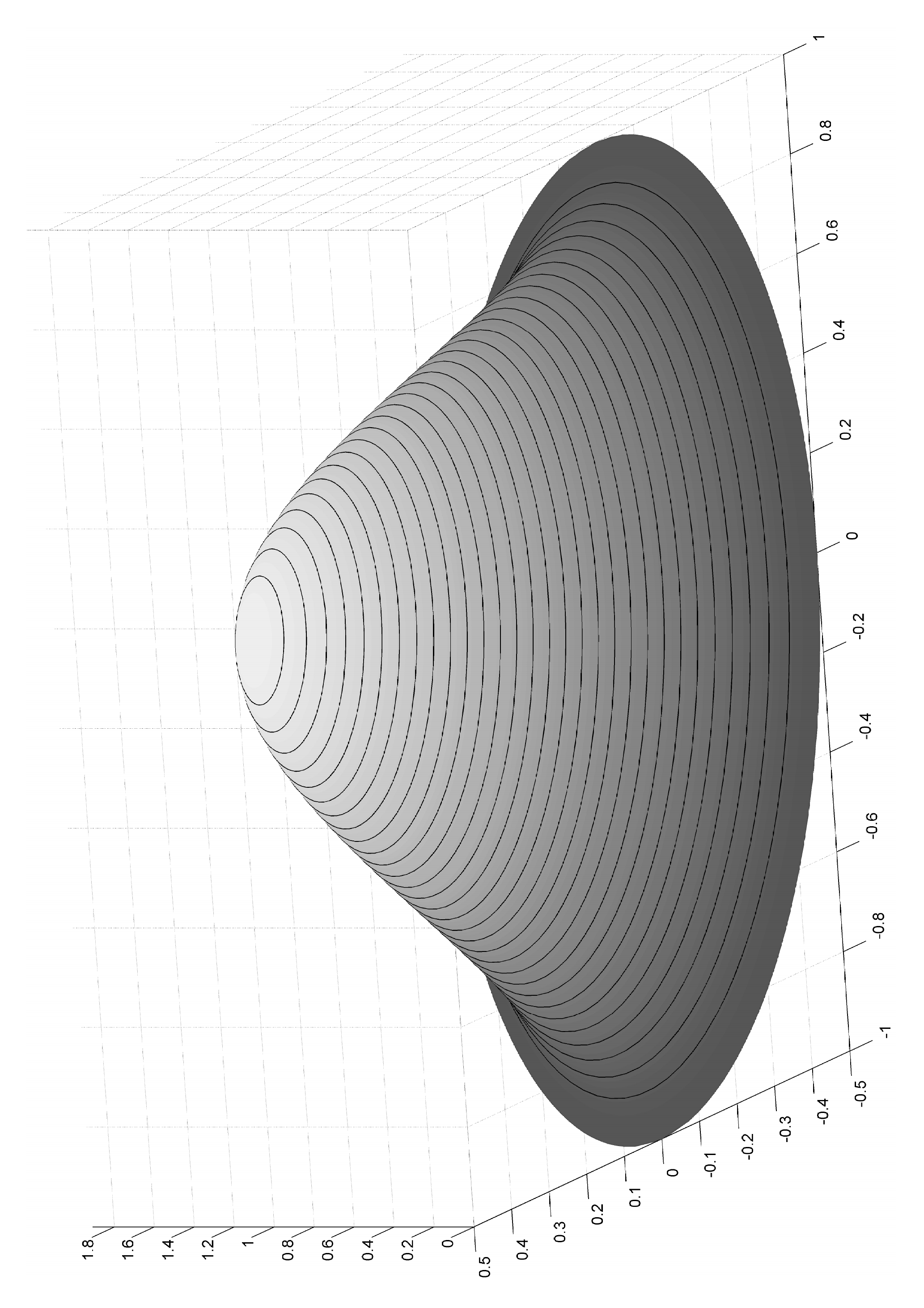} \\
\begin{center}
\vspace{-3mm}
\end{center}
\end{minipage} \hspace{4mm}
\begin{minipage}{0.4\textwidth}
\hspace{-5mm}
\includegraphics[width=0.6\textwidth, angle=-90,trim = 60 0 100 0 ] {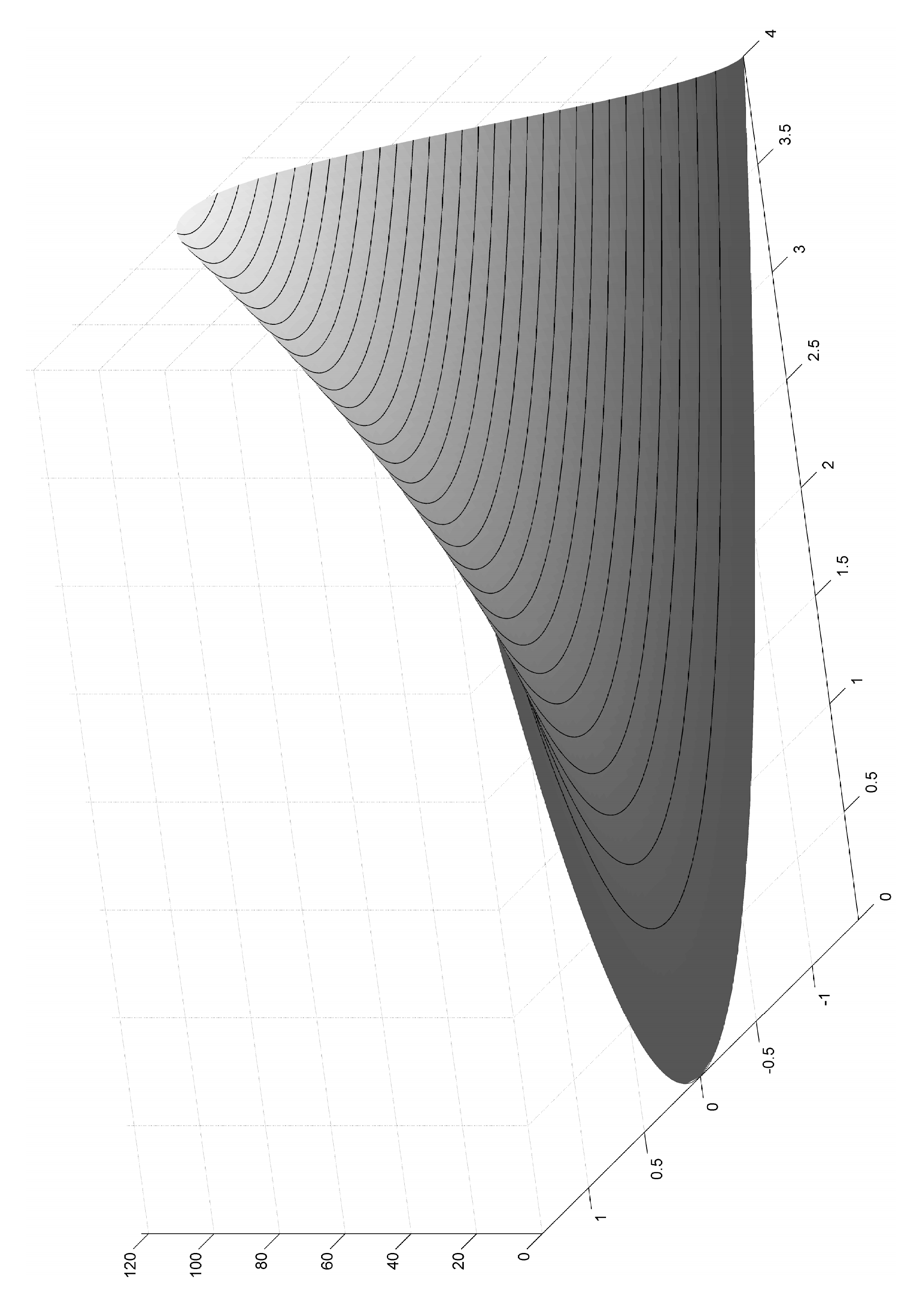}\\
\begin{center}
\vspace{-3mm}
\end{center}
\end{minipage}\\[3mm]
\end{center}
\caption{The density $f_{\beta}$ (left) and that of the intensity measure $\alpha \cdot \Lambda_\beta$ (right) in the setting of \autoref{ex_pearson_type_II} for $d=2$ with $a_1 =1, a_2 = 1/2$ and $\beta = 2$.}
\label{fig_pearson_beta_2}
\end{figure}

\subsection{Joint convergence of the \texorpdfstring{$k$}{k} largest distances}
\label{subsec_joint_convergence_k_largest_distances}
To state a result on the joint asymptotical behavior of the $k$ largest distances of the Poisson process $\mathbf{Z}_n = \sum_{i=1}^{N_n}\varepsilon_{Z_{i}}$, introduced in \autoref{sec_fundamentals}, we need some additional definitions.
For $n \in \N$, let $D_n^{(1)} \ge D_n^{(2)} \ge \ldots \ge D_n^{\left( k\right)}$ be the $k$ largest distances in descending order between $Z_i$ and $Z_j$ for $1 \le i< j \le N_n$. So, we especially have $D_n^{(1)} =   \diam(\mathbf{Z}_n ).$
For a point process $\xi$ on $\R_+$ and $i \in \N$ we define $t_i(\xi) := \inf\left\{ t: \xi\big([0,t]\big) \ge i\right\}.$
According to Proposition 9.1.XII in~\cite{Daley2008}, each $t_i(\xi)$ is a well-defined random variable if $\xi$ is a simple point process. Since the point processes $ \widehat G(\mathbf{X}_n \times \mathbf{Y}_n)$ and $ \widehat G(\mathbf{X} \times \mathbf{Y})$ on $\R_+$ (introduced in the proof of \autoref{thm_main_result}) are simple, we conclude that the random variables $t_i\big( \widehat G(\mathbf{X}_n \times \mathbf{Y}_n) \big)$ and $t_i\big( \widehat G(\mathbf{X} \times \mathbf{Y}) \big)$ are well-defined for each fixed $ i \in \N$.
Now we can state our result on the joint convergence of the $k$ largest distances in the setting of \autoref{sec_main_results}:
\vspace{2mm}
\begin{theorem}
\label{thm_main_result_joint_conv}
If Conditions~\ref{cond_unique_diameter} to \ref{cond_density_p_l_p_r} hold true, we have for each $k \in \N$ the joint convergence
\begin{equation*}
n^{\frac{2}{d+1}}
\Big(
  2a - D_n^{(1)}\ ,\ldots, \ 2a - D_n^{(k)}
\Big)
 \overset{\mathcal{D}}{\longrightarrow }
\Big(
  t_1\big( \widehat G(\mathbf{X} \times \mathbf{Y}) \big)\ ,\ldots, \ t_k\big( \widehat G(\mathbf{X} \times \mathbf{Y}) \big)
\Big)
 ,
\end{equation*}
where $\mathbf{X}  \overset{\mathcal{D}}{= }  \PRM\big(p_\ell\cdot m_d\big|_{P(H_\ell)}\big) $ and $\mathbf{Y}  \overset{\mathcal{D}}{= }  \PRM\big(p_r\cdot m_d\big|_{P(H_r)}\big) $ are independent Poisson processes.
\end{theorem}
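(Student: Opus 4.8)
The plan is to rerun the proof of \autoref{thm_main_result}, this time keeping track of the $k$ smallest points of the relevant point process on $\R_+$ rather than only of the smallest one; the single new ingredient is to upgrade the one-dimensional convergence in \autoref{lemma_conv_of_t_1} to joint convergence of $t_1,\dots,t_k$. Fix $k\in\N$. Exactly as in the first paragraph of \autoref{subsubsec_main_part_proof_main_thm}, \autoref{cond_unique_diameter} together with the compactness of $E$ yields, for every sufficiently small $\delta>0$, a constant $c_\delta>0$ such that $|x-y|\le 2a-c_\delta$ for every ``non-cross'' pair $\{x,y\}\subset E$, i.e.\ every pair that does not have one point in $E_{\ell,\delta}$ and the other in $E_{r,\delta}$. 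On the other hand, since arbitrarily small balls around the two poles carry positive $\P_Z$-mass and therefore eventually contain at least $k$ of the points $Z_1,\dots,Z_{N_n}$ each, with probability tending to $1$ there are at least $k$ cross-pairs whose distance exceeds $2a-c_\delta$. Hence, with probability tending to $1$, the $k$ largest distances $D_n^{(1)},\dots,D_n^{(k)}$ are all realised by cross-pairs, and by the Poissonisation of \autoref{subsubsec_main_part_proof_main_thm} — the restrictions of $\mathbf{Z}_n$ to the disjoint pole-caps $E_{\ell,\delta}$ and $E_{r,\delta}$ are independent Poisson processes distributed as $\widehat{\mathbf{X}}_n$ and $\widehat{\mathbf{Y}}_n$ — the family of these cross-distances has the same law as $\{\,|X_i-Y_j|:(i,j)\in I_n\,\}$. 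So it suffices to control the $k$ largest values of $|X_i-Y_j|$, $(i,j)\in I_n$.

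Combining the inequalities $\widetilde G(x,y)(1-\varepsilon)\le 2a-|x-y|\le\widetilde G(x,y)(1+\varepsilon)$ on $E_\delta$ with \eqref{eq_reason_first_power_bigger} and the elementary observation that scaling every member of a finite family of reals by a factor in $[1-\varepsilon,1+\varepsilon]$ moves each of its order statistics by at most the same factor, one obtains, with probability tending to $1$ and simultaneously for $m=1,\dots,k$,
\[
(1-\varepsilon)\,t_m(\xi_n)\ \le\ n^{\frac{2}{d+1}}\big(2a-D_n^{(m)}\big)\ \le\ (1+\varepsilon)\,t_m(\xi_n),\qquad\text{where }\xi_n:=\widehat G(\mathbf{X}_n\times\mathbf{Y}_n).
\]
Since $\varepsilon>0$ is arbitrary, the theorem reduces to the joint convergence $\big(t_1(\xi_n),\dots,t_k(\xi_n)\big)\overset{\mathcal{D}}{\longrightarrow}\big(t_1(\xi),\dots,t_k(\xi)\big)$ with $\xi:=\widehat G(\mathbf{X}\times\mathbf{Y})$.

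It remains to establish this last convergence, and this is the one place where something beyond \autoref{subsubsec_main_part_proof_main_thm} is needed. We already have $\xi_n\overset{\mathcal{D}}{\longrightarrow}\xi$ on $M_p(\R_+)$ by \eqref{eq_conv_G_hat_X_times_Y}, and the first part of the proof of \autoref{lemma_conv_of_t_1} shows that $\xi$ is almost surely simple with $\xi(\{t\})=0$ a.s.\ for every $t\ge0$; moreover $G^{-1}([0,t])$ is compact (proof of \autoref{lemma_continuity_of_G_hat}), so $\xi$ is locally finite on $\R_+$, and since $\mathbf{X}$ and $\mathbf{Y}$ have infinitely many points we have $\xi(\R_+)=\infty$ a.s., so $(t_1(\xi),\dots,t_k(\xi))$ is a.s.\ a finite, strictly increasing vector. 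The measurable map $T\colon\chi\mapsto(t_1(\chi),\dots,t_k(\chi))$ is continuous at every boundedly finite, simple point measure $\chi$ with $\chi(\{0\})=0$ and infinitely many atoms: for such $\chi$ the first $k$ atoms $p_1<\dots<p_k$ are distinct and positive, each admits an open interval containing no other atom of $\chi$, and there is $\rho>0$ with no atom of $\chi$ in $[0,p_1-\rho]$ or in the gaps between these intervals up to $p_k+\rho$, so any point measure vaguely close to $\chi$ has no atom in $[0,p_1)$, exactly one atom near each $p_m$, and no atoms in between, which forces $t_m\to p_m$. As $\P_\xi$ is concentrated on such $\chi$, the continuous mapping theorem gives the claim. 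Alternatively, one may avoid working in $M_p(\R_+)$: by Theorem 16.16 in \cite{Kallenberg2002}, $\xi_n\overset{\mathcal{D}}{\longrightarrow}\xi$ and $\xi(\{s_i\})=0$ a.s.\ force joint convergence of the integer-valued count vectors $\big(\xi_n([0,s_1]),\dots,\xi_n([0,s_k])\big)$, hence of their joint probability mass functions; writing $\{t_m(\xi_n)\le s_m\}=\{\xi_n([0,s_m])\ge m\}$ and expanding $\P\big(\bigcap_{m}\{\xi_n([0,s_m])\ge m\}\big)$ by inclusion--exclusion over the complementary events $\{\xi_n([0,s_m])\le m-1\}$ — each of which depends on only finitely many mass-function values — yields convergence of the joint distribution functions of $(t_1(\xi_n),\dots,t_k(\xi_n))$ to those of $(t_1(\xi),\dots,t_k(\xi))$.

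The main obstacle is precisely this third step: the localisation and the passage to $\xi_n$ are line-for-line as in \autoref{subsubsec_main_part_proof_main_thm}, whereas showing that the order-statistic functional $T$ survives the weak limit requires some care, and it is exactly the simplicity of $\xi$ and the absence of fixed atoms — both already recorded in the proof of \autoref{lemma_conv_of_t_1} — that make it work.
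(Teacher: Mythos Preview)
Your proof is correct and follows exactly the route the paper indicates—the paper itself gives no detailed argument here, merely stating that the proof is ``a simple generalization of that of \autoref{thm_main_result}'' and referring to \cite{Schrempp2017}; your write-up is precisely such a generalization, with the single new ingredient being the upgrade of \autoref{lemma_conv_of_t_1} to joint convergence of $(t_1,\dots,t_k)$. One small imprecision: the first part of the proof of \autoref{lemma_conv_of_t_1} shows only that $\xi(\{t\})=0$ a.s.\ for each \emph{fixed} $t$, not that $\xi$ is a.s.\ simple; simplicity (needed for your continuous-mapping argument) is asserted separately by the paper in the paragraph preceding \autoref{thm_main_result_joint_conv}, and in any case your alternative argument via Theorem~16.16 in \cite{Kallenberg2002} only requires the absence of fixed atoms.
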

The proof of this theorem is a simple generalization of that of \autoref{thm_main_result}, see Section 5.3 in \cite{Schrempp2017} for more details.
We can immediately generalize the results of \autoref{subsec_more_general_densities_in_ellipsoids}, too:
\vspace{2mm}
\begin{theorem}
\label{thm_joint_convergence_gen_density}
Let the density $f$ be supported by the ellipsoid $E$ defined in~\eqref{eq_def_open_ellipsoid_Pearson} with half-axes $a_1 > a_2 \ge \ldots \ge a_d>0$, and put $a := a_1 $. If $f$ satisfies \autoref{cond_f_gen_density_in_ellipsoid} with $\beta_\ell = \beta_r =:\beta$ then, for each fixed $k \ge 1$, we have
\begin{equation*}
n^{\frac{2}{d+1+2\beta}}
\Big(
  2a - D_n^{(1)}\ ,\ \ldots\ ,\  2a - D_n^{(k)}
\Big)
 \overset{\mathcal{D}}{\longrightarrow }
\Big(
  t_1\big( \widehat G(\mathbf{X} \times \mathbf{Y}) \big)\ ,\ \ldots\ , \ t_k\big( \widehat G(\mathbf{X} \times \mathbf{Y}) \big)
\Big),
\end{equation*}
where $\mathbf{X} \overset{\mathcal{D}}{= }  \PRM\big(  \alpha_\ell\cdot \Lambda_{\beta}\big) $ and $\mathbf{Y} \overset{\mathcal{D}}{= }  \PRM\big(\alpha_r\cdot \Lambda_{\beta}\big) $ are independent Poisson processes.
\end{theorem}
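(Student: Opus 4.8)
The plan is to merge the two refinements of \autoref{thm_main_result} that are already available. \autoref{thm_main_result_joint_conv} upgrades the conclusion from the single quantity $D_n^{(1)}$ to the vector of the $k$ largest distances, replacing the functional $t_1$ by the order functionals $(t_1,\dots,t_k)$; and \autoref{thm_main_result_gen_density_in_ellipsoid} in the case $\beta_\ell=\beta_r=:\beta$ upgrades \autoref{cond_density_p_l_p_r} to \autoref{cond_f_gen_density_in_ellipsoid}, replacing \autoref{lemma_transformation_density} by \autoref{lemma_transformation_density_gen_density}, the scaling exponent by $\nu:=1/(d+1+2\beta)$, and the limiting intensity $p_i\cdot m_d|_{P(H_i)}$ by $\alpha_i\cdot\Lambda_\beta$. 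The proof below performs both upgrades simultaneously, keeping the skeleton of \autoref{subsubsec_main_part_proof_main_thm} intact and noting that here $a=a_1$ and, since $E$ is an ellipsoid, $P_1(H)\subset P(H)$, so that $P(H)$ itself serves as state space for all the point processes.

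First I would localise at the poles exactly as in the proof of \autoref{thm_main_result}: fix $\delta\in(0,\delta^{*}]$, observe that distances between two points lying in the same pole-cap are bounded away from $2a$, so that with probability tending to $1$ all $k$ largest interpoint distances of $\mathbf{Z}_n$ are realised by pairs with one point in $E_{\ell,\delta}$ and the other in $E_{r,\delta}$. De-Poissonising via Theorem 3.2 in \cite{MayerMol2007} applied to $\mathbf{Z}_n\mapsto 2a-D_n^{(i)}$, $i=1,\dots,k$, it suffices to work with the independent Poisson processes $\widehat{\mathbf{X}}_n,\widehat{\mathbf{Y}}_n$ on the two caps. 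Using the Taylor expansion \eqref{eq_taylor_eukl_dist_at_poles} together with \autoref{lemma_R_is_little_o_of_G_tilde}, for every $\varepsilon>0$ there is a $\delta$ such that the rescaled vector $n^{2\nu}\big(2a-D_n^{(1)},\dots,2a-D_n^{(k)}\big)$ lies componentwise between $(1-\varepsilon)$ and $(1+\varepsilon)$ times $(t_1,\dots,t_k)\big(\widehat G(\mathbf{X}_n\times\mathbf{Y}_n)\big)$, where $\mathbf{X}_n,\mathbf{Y}_n$ are the shifted, $T_n$-rescaled pole-cap processes and $G,\widehat G$ are as in \autoref{subsubsec_main_part_proof_main_thm} with $a=a_1$.

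Next, \autoref{lemma_transformation_density_gen_density}, inserted in place of \autoref{lemma_transformation_density}, drives the argument of \autoref{lemma_Conv_of_V_n} and the Mapping Theorem to give $\mathbf{X}_n\overset{\mathcal{D}}{\longrightarrow}\mathbf{X}$ and $\mathbf{Y}_n\overset{\mathcal{D}}{\longrightarrow}\mathbf{Y}$ with $\mathbf{X}\overset{\mathcal{D}}{= }\PRM(\alpha_\ell\cdot\Lambda_\beta)$ and $\mathbf{Y}\overset{\mathcal{D}}{= }\PRM(\alpha_r\cdot\Lambda_\beta)$; the continuity of $\widehat G$ on $P(H)\times P(H)$ is as in \autoref{lemma_continuity_of_G_hat}, so by independence and the continuous mapping theorem $\widehat G(\mathbf{X}_n\times\mathbf{Y}_n)\overset{\mathcal{D}}{\longrightarrow}\widehat G(\mathbf{X}\times\mathbf{Y})$ in $M_p(\R_+)$. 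As in the proof of \autoref{thm_main_result_joint_conv}, the map $\xi\mapsto(t_1(\xi),\dots,t_k(\xi))$ is almost surely continuous at $\widehat G(\mathbf{X}\times\mathbf{Y})$: that process is simple because a Fubini argument as in \autoref{lemma_conv_of_t_1}, using the absolute continuity of $\alpha_\ell\Lambda_\beta$ and $\alpha_r\Lambda_\beta$ on $P(H)$, excludes ties $G(\mathcal{X}_i,\mathcal{Y}_j)=G(\mathcal{X}_k,\mathcal{Y}_l)$, and it is locally finite on $\R_+$ since $\Lambda_\beta\big(\{z\in P(H):z_1\le s\}\big)<\infty$ for every $s>0$ (the singularity $u^\beta$, $u\in(0,1)$, being integrable for $\beta>-1$) and $G(x,y)\ge c(x_1+y_1)$ for some $c\in(0,1)$. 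The continuous mapping theorem then yields the joint convergence of $(t_1,\dots,t_k)\big(\widehat G(\mathbf{X}_n\times\mathbf{Y}_n)\big)$, and letting $\varepsilon\to0$ in the sandwich gives the asserted limit; the $M_n$-version follows by one more appeal to Theorem 3.2 in \cite{MayerMol2007}.

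The only genuinely delicate point is not in this theorem but is inherited from \autoref{lemma_transformation_density_gen_density}: when $\beta\in(-1,0)$ the density of $\Lambda_\beta$ is unbounded along $\partial P(H)$, so the convergence $\kappa_n(\boundSet)\to\Lambda_\beta(\boundSet)$ follows neither from dominated nor from monotone convergence and must instead be obtained via Scheff\'e's lemma, exactly as in \cite{Schrempp2017}. Beyond that, the order-statistic bookkeeping for $(t_1,\dots,t_k)$, the simplicity and local finiteness of $\widehat G(\mathbf{X}\times\mathbf{Y})$, and the $\varepsilon$-sandwiching are routine recombinations of arguments already carried out for \autoref{thm_main_result}, \autoref{thm_main_result_joint_conv} and \autoref{thm_main_result_gen_density_in_ellipsoid}.
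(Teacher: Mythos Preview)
Your proposal is correct and follows exactly the route the paper takes: the paper itself offers no separate proof but simply remarks that one ``can immediately generalize the results of \autoref{subsec_more_general_densities_in_ellipsoids}'' and that the proof of \autoref{thm_main_result_joint_conv} ``is a simple generalization of that of \autoref{thm_main_result}'', i.e.\ one runs the skeleton of \autoref{subsubsec_main_part_proof_main_thm} with \autoref{lemma_transformation_density_gen_density} in place of \autoref{lemma_transformation_density} and with $(t_1,\dots,t_k)$ in place of $t_1$. One small slip: the passage from $\mathbf{Z}_n$ to the independent cap processes $\widehat{\mathbf{X}}_n,\widehat{\mathbf{Y}}_n$ is not ``de-Poissonisation'' but the restriction property of Poisson processes (and note that the theorem as stated concerns only the Poissonised quantities $D_n^{(i)}$, so Theorem~3.2 of \cite{MayerMol2007} is not needed here at all).
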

\vspace{2mm}

Notice that the definition $a := a_1$ in the theorem above is necessary, since the function $\widehat G$ has been defined in \autoref{subsubsec_main_part_proof_main_thm} in terms of $a$, not of $a_1$.
See \cite{Schrempp2017} for the results of a simulation study.

\subsection{\texorpdfstring{$p$}{p}-superellipsoids and \texorpdfstring{$p$}{p}-norms}
\label{subsec_p_pllipsoids_p_norms}

For $1 \le p < \infty$  and $a_1 > a_2 \ge a_3 \ge \ldots \ge a_d >0$ we define the so-called $p$-superellipsoid
$$
E^p := \left\{ z \in \R^d: \sum_{k=1}^{d}\left( \frac{|z_k|}{a_k}\right)^p \le 1\right\}
$$
and the corresponding $p$-norm
$$
|z|_p := \left( \sum_{k=1}^{d}|z_k|^p \right)^{\frac{1}{p}}, \quad z \in \R^d.
$$
Moreover, based on this norm, let
$$
\diam_p(A) := \sup_{x,y \in A} |x-y|_p
$$
be the so-called $p$-diameter of a set $A \subset \R^d$.
The definitions of $E^p$ and $|\cdot|_p$ yield
$\big|(-a_1,\mathbf{0} )-(a_1,\mathbf{0} )\big|_p = 2a_1$, and in view of  $a_1 > a_2 \ge a_3 \ge \ldots \ge a_d >0$ we have $|z|_p \le a_1$ for each $z \in E^p$, with equality only for $z \in \big\{ (-a_1,\mathbf{0} ),(a_1,\mathbf{0} )\big\}$. Together with $|x-y|_p \le |x|_p + |y|_p$ for all $x,y \in \R^d$ we can infer that the set $E^p$ has a unique diameter of length $2a_1$ with respect to the $p$-norm between the points $(-a_1,\mathbf{0} )$ and $(a_1,\mathbf{0} )$.\\

We assume that the random variables $Z_1,Z_2,\ldots$ are i.i.d. with a common density $f$, supported by the superellipsoid $E^p$. As in \autoref{sec_main_results}, we consider densities that are continuous and bounded away from 0 at the poles.  In this subsection we will investigate the largest distance between these random points with respect to the corresponding $p$-norm, not with respect to the Euclidean norm, i.e. we consider
$$
M_n^{p} := \max_{1 \le i , j \le n}|Z_i - Z_j|_p.
$$
Using the Poisson process $\mathbf{Z}_n$ with intensity measure $n\P_{Z}$, defined in \autoref{sec_fundamentals}, we get
$$
\diam_p(\mathbf{Z}_n ) =   \max_{1 \le i,j \le N_n} \big|Z_i - Z_j\big|_p.
$$
Defining the new limiting set
\begin{equation*}
\label{eq_p_ellips_def_limiting_set}
 P^p := \left\{ z \in \R^d: \sum_{k=2}^d \left( \frac{|z_k|}{a_k}\right)^p \le \frac{pz_1}{a_1}\right\}
\end{equation*}
and using very similar techniques as seen before in \autoref{sec_proof_main_thm},
we can prove the following result:
\vspace{2mm}
\begin{theorem}
\label{thm_p_ellipsoid_p_norm}
Under the standing assumptions of this section and if \autoref{cond_density_p_l_p_r} holds true for $E$ replaced with $E^p$ and $a = a_1$, then
\begin{equation*}
n^{\frac{p}{d+p-1}}\big(2a_1 - \mathrm{diam}_p(\mathbf{Z}_n )\big) \overset{\mathcal{D}}{\longrightarrow } \min_{i,j \ge 1} \left\{ \mathcal{X}_{i,1}  + \mathcal{Y}_{j,1} - \frac{1}{p(2a_1)^{p-1}} \big|\widetilde {\mathcal{X}}_{i}- \widetilde {\mathcal{Y}}_{j}|_p^p \right\},
\end{equation*}
where $\left\{ \mathcal{X}_i, i \ge 1\right\} \overset{\mathcal{D}}{= }  \PRM\big(p_\ell\cdot m_d\big|_{P^p}\big) $ and $\left\{ \mathcal{Y}_j, j \ge 1\right\} \overset{\mathcal{D}}{= }  \PRM\big(p_r\cdot m_d\big|_{P^p}\big) $ are independent Poisson processes. The same holds true if we replace $ \diam_p(\mathbf{Z}_n )$ with $M_n^{p}$.
\end{theorem}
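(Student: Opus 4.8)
The plan is to run, almost line for line, the three-part argument of \autoref{thm_main_result}, with every Euclidean ingredient replaced by its $p$-analogue. The correct rescaling is $T_n(z) := \bigl(n^{p\nu}z_1,\,n^{\nu}\widetilde z\bigr)$ with $\nu := \tfrac{1}{d+p-1}$; this is again the unique choice making the Jacobian $\Delta T_n\equiv n^{p\nu+(d-1)\nu}=n$, which is what forces the factors $1/n$ and $n$ to cancel in the Poisson-convergence step. The role of the osculating paraboloid $P(H_i)$ is taken by $P^p$, and the second-order Taylor polynomial of $(x,y)\mapsto|x-y|$ is replaced by the corresponding expansion of $(x,y)\mapsto|x-y|_p$. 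A pleasant feature of the superellipsoid (as in the ellipsoid case of \cite{Schrempp2016}, and contrary to the general situation described in \autoref{rem_inclusion_limiting_set}) is that no artificial enlargement of the limiting set is needed: writing the shifted left pole-cap as $P_1^p=\bigl\{\widetilde z\ne\mathbf 0:\ \sum_{k=2}^d(|z_k|/a_k)^p\le 1-(1-z_1/a_1)^p,\ 0\le z_1<a_1\bigr\}$, Bernoulli's inequality $(1-u)^p\ge 1-pu$ gives $P_1^p\subset P^p$ directly, so $P^p$ itself serves as the state space for the limiting processes.

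\emph{Geometric considerations.} First I would record that near the left pole $\partial E^p$ is the graph $z_1=-a_1+s^\ell(\widetilde z)$ with $s^\ell(\widetilde z)=a_1-a_1\bigl(1-\sum_{k=2}^d(|z_k|/a_k)^p\bigr)^{1/p}=\tfrac{a_1}{p}\sum_{k=2}^d(|z_k|/a_k)^p+o(|\widetilde z|^p)$ as $\widetilde z\to\mathbf 0$ (for $p\ge 2$ this can be phrased through a Hessian/Taylor argument, for $1\le p<2$ one works with the explicit formula), so that $T_n\bigl(P_1^p\cap\{z_1\le\delta^\ast\}\bigr)\to P^p$ for almost every $z$. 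For the distance, setting $u:=y_1-x_1>0$ (near $2a_1$) and $w:=|\widetilde x-\widetilde y|_p^p$ (small) one has $|x-y|_p=(u^p+w)^{1/p}=u+\tfrac1p u^{1-p}w+O(u^{1-2p}w^2)$, whence
\[
2a_1-|x-y|_p=\widetilde G_p(x,y)-R_p(x,y),\qquad \widetilde G_p(x,y):=(a_1+x_1)+(a_1-y_1)-\frac{1}{p(2a_1)^{p-1}}\,|\widetilde x-\widetilde y|_p^p .
\]
The $p$-analogue of \autoref{lemma_R_is_little_o_of_G_tilde}, namely $R_p(x,y)=o\bigl(\widetilde G_p(x,y)\bigr)$ uniformly on the $p$-pole-caps, then follows from $R_p(x,y)=O\bigl(((a_1+x_1)+(a_1-y_1))^2\bigr)$ together with a lower bound $\widetilde G_p(x,y)\ge c\,\bigl((a_1+x_1)+(a_1-y_1)\bigr)$; the latter is obtained from the convexity bound $|\widetilde x-\widetilde y|_p^p\le 2^{p-1}(|\widetilde x|_p^p+|\widetilde y|_p^p)$, the pole-cap estimate $|\widetilde x|_p^p\le a_2^p\sum_{k=2}^d(|x_k|/a_k)^p\le (a_2^p/a_1)\,p\,(a_1+x_1)(1+o(1))$ (and likewise for $\widetilde y$), and the strict inequality $a_2<a_1$, which together give $c=1-(a_2/a_1)^p>0$.

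\emph{Poisson convergence, main part, de-Poissonization.} Next I would prove the $p$-versions of \autoref{lemma_transformation_density} and \autoref{lemma_Conv_of_V_n}: if $\ZVhelp$ has a density on $P_1^p\cap\{z_1\le\delta^\ast\}$ tending to $p_i>0$ at the pole, then $\P\bigl(T_n(\ZVhelp)\in\boundSet\bigr)=\kappa_n(\boundSet)/n$ with $\kappa_n(\boundSet)\to p_i\,m_d\big|_{P^p}(\boundSet)$, the only new input being the a.e.\ convergence $T_n(P_1^p\cap\{z_1\le\delta^\ast\})\to P^p$ noted above, so that the rescaled processes near the two poles converge to independent $\mathbf X\overset{\mathcal D}{=}\PRM(p_\ell\, m_d|_{P^p})$ and $\mathbf Y\overset{\mathcal D}{=}\PRM(p_r\, m_d|_{P^p})$. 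After restricting to the $p$-pole-caps (legitimate since $|z|_p<a_1$ strictly off the poles and the density is positive there, so $\mathrm{diam}_p(\mathbf Z_n)$ is asymptotically attained near the poles), one pushes forward under $G_p(x,y):=x_1+y_1-\tfrac{1}{p(2a_1)^{p-1}}|\widetilde x-\widetilde y|_p^p$, uses the identity $\min_{(i,j)\in I_n}\{n^{p\nu}\widetilde G_p(X_i,Y_j)\}=t_1\bigl(\widehat G_p(\mathbf X_n\times\mathbf Y_n)\bigr)$, and verifies the $p$-analogue of \autoref{lemma_continuity_of_G_hat}: the same computation as above, now carried out on $P^p\times P^p$ instead of a pole-cap, yields $G_p(x,y)\ge\bigl(1-(a_2/a_1)^p\bigr)(x_1+y_1)\ge 0$, so $G_p$ is proper and $\widehat G_p$ continuous. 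The analogue of \autoref{lemma_conv_of_t_1} carries over verbatim (its proof only uses that $G_p^{-1}(\{t\})$ is Lebesgue-null), and the sandwich $(1-\varepsilon)\min\{\cdots\}\le n^{p\nu}\bigl(2a_1-\mathrm{diam}_p(\mathbf Z_n)\bigr)\le(1+\varepsilon)\min\{\cdots\}$ with $\varepsilon\downarrow 0$ gives the stated limit; finally Theorem 3.2 of \cite{MayerMol2007} applied to $\Psi(\mathbf Z_n)=2a_1-\mathrm{diam}_p(\mathbf Z_n)$ de-Poissonizes it to $M_n^p$.

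\emph{Main obstacle.} The probabilistic machinery is essentially a copy once the scaling $\nu=1/(d+p-1)$ and the state space $P^p$ are in place; the genuinely delicate point is the low regularity of both $\partial E^p$ and $z\mapsto|z|_p^p$ when $p<2$, where there is no Hessian and so \autoref{lem_first_part_deriv_are_0_and_Hessian_pos_def} and its descendants must be bypassed by working with the explicit formulae and deriving the uniform remainder estimates (the $p$-analogues of \eqref{eq_taylor_eukl_dist_at_poles} and \autoref{lemma_R_is_little_o_of_G_tilde}) by hand. The extreme case $p=1$, where the pole-caps are cones with corners, is most conveniently treated separately, and is in fact easier since the expansion of $|x-y|_1$ is exact; see \cite{Schrempp2017} for the full technical details.
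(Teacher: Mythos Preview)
Your proposal is correct and follows exactly the approach the paper indicates: adapt the three steps of \autoref{sec_proof_main_thm} with the $p$-analogues of $\widetilde G$, $T_n$, and $P(H_i)$, and then de-Poissonize. The paper itself defers the details to Section 5.5 of \cite{Schrempp2017} after noting that one uses ``very similar techniques'', and what you have written is precisely such an adaptation; your observation that $P_1^p\subset P^p$ via Bernoulli's inequality (so that no enlargement $\widehat\con\cdot P^p$ is needed) and your lower bound $G_p(x,y)\ge\bigl(1-(a_2/a_1)^p\bigr)(x_1+y_1)$ are the clean replacements for \autoref{cond_A_eta_pos_semi_definite} and \autoref{lemma_continuity_of_G_hat} in this concrete setting.

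One small correction: your remark that ``for $p\ge 2$ this can be phrased through a Hessian/Taylor argument'' is not quite right. For $p>2$ the Hessian of $s^\ell$ at $\mathbf 0$ is zero (since $|z_k|^p$ has vanishing second derivative there), so it does not capture the leading term; only $p=2$ fits the Hessian framework of Conditions~\ref{cond_shape_pole_caps}--\ref{cond_A_eta_pos_semi_definite}. For every $p\ne 2$ one must work directly with the explicit expansion $s^\ell(\widetilde z)=\tfrac{a_1}{p}\sum_{k=2}^d(|z_k|/a_k)^p+o(|\widetilde z|_p^p)$, as you correctly do elsewhere in your sketch. This does not affect the proof, only the wording.
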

\vspace{2mm}
The proof of this theorem can be found in Section 5.5 in \cite{Schrempp2017}.

\begin{corollary}
\label{cor_p_norm}
Given the uniform distribution on $E^p$, \autoref{cond_density_p_l_p_r} holds true for $E$ replaced with $E^p$, $a=a_1$ and
$$
p_\ell = p_r = \frac{1}{m_d\big(E^p \big)} = \left( \frac{\left( 2 \Gamma\left( 1 + \frac{1}{p}\right) \right)^d\prod_{i=1}^d a_i}{\Gamma\left( 1 + \frac{d}{p}\right)} \right)^{-1} > 0,
$$
see~\cite{Wang2005}. We can thus apply \autoref{thm_p_ellipsoid_p_norm}.
Notice that \autoref{cor_ellipse_uniform} is a special case of this corollary, namely for $p = 2$.
\end{corollary}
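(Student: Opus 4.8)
The plan is simply to verify that the uniform distribution on $E^p$ satisfies the hypotheses of \autoref{thm_p_ellipsoid_p_norm}; since that theorem has already been established, the corollary then follows at once. There are only two things to check: the value of the normalizing constant $1/m_d(E^p)$ and the continuity condition at the poles.

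First I would recall the classical volume formula for a $p$-superellipsoid. Substituting $w_k = z_k/a_k$ turns $m_d(E^p)$ into $\big(\prod_{i=1}^d a_i\big)$ times the Lebesgue measure of the standard unit ball $\{w \in \R^d : \sum_{k=1}^d |w_k|^p \le 1\}$ for the $p$-norm; by symmetry the latter equals $2^d$ times the measure of its intersection with the positive orthant, which is a Dirichlet-type integral evaluating to $\Gamma(1+1/p)^d/\Gamma(1+d/p)$. This gives $m_d(E^p) = \big(2\Gamma(1+1/p)\big)^d \prod_{i=1}^d a_i \big/ \Gamma(1+d/p)$, a finite and strictly positive number; for this step I would just cite \cite{Wang2005}. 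Consequently the uniform distribution on the closed set $E^p$ has the constant density $f \equiv 1/m_d(E^p)$ on $E^p$.

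It then remains to observe that this constant density trivially meets \autoref{cond_density_p_l_p_r} with $E$ replaced by $E^p$ and $a = a_1$: the poles $(\pm a_1,\mathbf{0})$ belong to $E^p$, $f$ is continuous there because it is constant, and $p_\ell := f(-a_1,\mathbf{0}) = p_r := f(a_1,\mathbf{0}) = 1/m_d(E^p) > 0$. The remaining standing assumptions of \autoref{subsec_p_pllipsoids_p_norms} --- that $Z_1,Z_2,\ldots$ are i.i.d.\ with a density supported by $E^p$, and that $E^p$ has a unique $p$-diameter of length $2a_1$ between $(\pm a_1,\mathbf{0})$ --- were already checked in the text for all $1 \le p < \infty$ and $a_1 > a_2 \ge \ldots \ge a_d > 0$. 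Hence \autoref{thm_p_ellipsoid_p_norm} applies and yields the displayed limit law. Finally, taking $p = 2$ turns $|\cdot|_p$ into the Euclidean norm, $E^p$ into the ellipsoid of \autoref{cor_Ellipsoid}, $P^p$ into $P$, and $1/m_d(E^p)$ into $1/m_d(E)$ as in \autoref{cor_ellipse_uniform}, so that \autoref{cor_ellipse_uniform} is the special case $p=2$. I do not expect any real obstacle here --- the only non-routine ingredient is the volume formula, which is standard and available in the literature, and the rest is a direct check that the hypotheses of the already-proven theorem hold.
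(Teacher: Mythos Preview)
Your proposal is correct and matches the paper's own treatment: the corollary is stated as an immediate consequence of \autoref{thm_p_ellipsoid_p_norm}, with the only nontrivial input being the volume formula $m_d(E^p) = \big(2\Gamma(1+1/p)\big)^d \prod_{i=1}^d a_i / \Gamma(1+d/p)$, for which the paper simply cites \cite{Wang2005} without further argument. Your verification that the constant density satisfies \autoref{cond_density_p_l_p_r} and your observation about the case $p=2$ are exactly the points the paper makes (the latter explicitly, the former implicitly), so there is nothing to add.
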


Some more generalizations can be found in \cite{Schrempp2017}:
Section 5.2 in that reference takes a look at more general densities supported by any set (not only ellipsoids), fulfilling the Conditions~\ref{cond_unique_diameter} to \ref{cond_A_eta_pos_semi_definite}. Furthermore, a different shape of $E$ close to the poles is considered in Section 5.4, and Section 5.6  illustrates that the smoothness of the boundary of $ E$ at the poles, as demanded in  \autoref{cond_shape_pole_caps}, is by no means necessary to prove results similar to that of \autoref{thm_main_result}.

\section{Generalizations 2 - Sets with no unique diameter}
\label{sec_generalizations_no_unique_diameter}

In this section we consider sets with no unique diameter, i.e. we no longer assume that \autoref{cond_unique_diameter} holds true. Basically, there are two different ways to modify this condition. The first is given by sets, having $ k  $ pairs of poles, where $1 < k < \infty $, see \autoref{cond_several_axes_k_pole_pairs} below for a formal definition. Such sets will be studied in \autoref{subsec_several_major_axes}. An alternative modification of \autoref{cond_unique_diameter} is  -- heuristically spoken in three dimensions -- given by sets with an equator, for example a three-dimensional ellipsoid with half-axes $1,1$ and $1/2$. For Pearson Type II distributed points in $d$-dimensional ellipsoids with at least two but less than $d$ major half-axes,  we still do not know whether a limit distribution for $M_n$ exists, or not. However, at least for each of these Pearson Type II distributions, \autoref{subsec_ellipsoid_several_major_axes} exhibits bounds for the limit distribution of $M_n$, provided that such a limit law exists.

\subsection{Several major axes}
\label{subsec_several_major_axes}
In this subsection we consider closed sets with more than one, but finitely many pairs of poles. To this end, we formulate a more general version of \autoref{cond_unique_diameter}:

\vspace{2mm}
\begin{condition}
\label{cond_several_axes_k_pole_pairs}
Let $E \subset \R^d$ be closed, $a > 0 $ , $k \ge 2 $ and $x^{(1)},\ldots,x^{(k)},y^{(1)},\ldots, y^{(k)} \in E $ so that $$
\diam(E) = \big|x^{(1)}-y^{(1)}\big| = \ldots = \big|x^{(k)}-y^{(k)}\big| = 2a
$$
and
\begin{equation}
\label{eq_several_axes_pairs_of_poles_different}
\left( x^{(i)},y^{(i)}\right) \ne \left( x^{(j)},y^{(j)}\right) \ne \left( y^{(i)},x^{(i)}\right)
\end{equation}
for $i \ne j$.
Furthermore, we assume
\begin{equation*}
|x-y| < 2a \qquad \text{for each} \qquad (x,y)\in \big( E\backslash \left\{ x^{(1)},\ldots,x^{(k)},y^{(1)},\ldots, y^{(k)} \right\} \big) \times E.
\end{equation*}
\end{condition}

Observe that \eqref{eq_several_axes_pairs_of_poles_different} makes sure that no pair of poles (points with distance $2a$) is considered twice.
We want to emphasize the assumption $k < \infty $ in \autoref{cond_several_axes_k_pole_pairs}. Sets with an equator -- like an ellipsoid in $\R^{3}$ with half-axes $a_1 = a_2 > a_3$ -- are explicitly excluded by this condition, see \autoref{subsec_ellipsoid_several_major_axes} for some considerations in this setting.\\

For $m \in \left\{ 1,\ldots,k\right\}$, let $\phi^{(m)}$ be a rigid motion of $\R^d$ with $\phi^{(m)}\big( x^{(m)}\big) = (-a,\mathbf{0} )$ and $\phi^{(m)}\big( y^{(m)}\big) = (a,\mathbf{0} )$. If $f$ is a density with support $E$, we write $f^{(m)} := f \circ (\phi^{(m)})^{-1}$ for the transformed density supported by $\phi^{(m)}(E)$.
Our basic assumption in this section will be that, for each $m \in \left\{ 1,\ldots,k\right\}$, the set $\phi^{(m)}(E)$ and the density $f^{(m)}$ fulfill all the requirements of \autoref{thm_main_result}, formally:
\vspace{2mm}
\begin{condition}
\label{cond_several_axes_polecaps}
For each $m \in \left\{ 1,\ldots,k\right\}$, we assume that $\phi^{(m)}(E)$ satisfies Conditions~\ref{cond_shape_pole_caps} and \ref{cond_A_eta_pos_semi_definite}, and that the density $f^{(m)}$ fulfills \autoref{cond_density_p_l_p_r} with respect to some constants $p_\ell^{(m)},p_r^{(m)}>0$.
\end{condition}
\vspace{2mm}

\cite{Appel2002} investigated a similar setting in two dimensions for sets with boundary functions that -- in contrast to \autoref{cond_several_axes_polecaps} -- decay faster to zero at the poles than a square-root. In that setting, it was necessary to demand that any two different major axes have no vertex in common. Under \autoref{cond_several_axes_polecaps}, this requirement is given by definition: None of the points $x^{(1)},\ldots,x^{(k)},y^{(1)},\ldots,y^{(k)}$ can be part of more than one pair of points with distance $2a$, or, in other words, the set $E$ has exactly $2k$ poles, see Lemma 6.1 in \cite{Schrempp2017} for some more details.
Writing $B_{\varepsilon}(z)$ for the closed ball with center $z \in \R^d$, we can infer that there exists an $\varepsilon >0$ so that the balls
$
B_{\varepsilon}\big( x^{(1)}\big),\ldots,B_{\varepsilon}\big( x^{(k)}\big),B_{\varepsilon}\big( y^{(1)}\big),\ldots,B_{\varepsilon}\big( y^{(k)}\big)
$
are pairwise disjoint. For $m \in \left\{ 1,\ldots,k\right\}$ we define the set
$$
E^{(m)} := E \cap \left( B_{\varepsilon}\big( x^{(m)}\big) \cup B_{\varepsilon}\big( y^{(m)}\big)\right).
$$
After moving $E^{(m)}$ via $\phi^{(m)}$ into the suitable position, \autoref{thm_main_result} is applicable for each $m \in \left\{ 1,\ldots,k\right\}$. We consider again the Poisson process $\mathbf{Z}_n= \sum_{i=1}^{N_n}\varepsilon_{Z_{i}} $, defined in \autoref{sec_fundamentals}. Since the sets $E^{(1)},\ldots,E^{(k)}$ are pairwise disjoint, the restrictions $\mathbf{Z}_n\big( \cdot \cap E^{(1)}\big),\ldots, \mathbf{Z}_n\big( \cdot \cap E^{(k)}\big)$ are independent Poisson processes.
Consequently, for $m \in \left\{ 1,\ldots,k\right\}$, the maximum distances of points lying in $E^{(m)}$ are independent random variables. With
\begin{align*}
I_{n}^{(m)} &:= \big\{ (i,j) : 1 \le  i, j \le N_n, (Z_i,Z_j) \in E^{(m)} \times E^{(m)} \big\}
\end{align*}
for $m \in \left\{ 1,\ldots,k\right\}$, we obtain $I_n^{(m)} \ne \emptyset$ for sufficiently large $n$ for each $ m \in \left\{ 1,\ldots,k\right\}$ almost surely and hence
\begin{align*}
2a - \max_{1\le i,j \le N_n}|Z_i - Z_j|
= 2a - \max_{1 \le m \le k} \left\{ \max_{(i,j) \in I_n^{(m)}}|Z_i - Z_j|\right\}
&= \min_{1 \le m \le k} \left\{2a -\max_{(i,j) \in I_n^{(m)}}|Z_i - Z_j|\right\}.
\end{align*}

As mentioned before, we can apply \autoref{thm_main_result} to each of the random variables $\max_{(i,j) \in I_n^{(m)}}|Z_i - Z_j|$,
and since these $k$ random variables are independent for each $n \in \N$, the $k$ limiting random variables inherit this property. Hence, we obtain as limiting distribution of the maximum distance of points within $E$ a minimum of $k$ independent random variables, each of which can be described as seen in \autoref{thm_main_result}. After stating one last definition we can formulate a generalized version of our main result \autoref{thm_main_result}.
Instead of $H_\ell$ and $H_r$ we write $H_\ell^{(m)}$ and $H_r^{(m)}$ for the Hessian matrices of the corresponding boundary functions of $E^{(m)}$ at the poles, $m \in \{1,\ldots,k\}$.
\vspace{2mm}
\begin{theorem}
\label{thm_several_major_axes}
Under \autoref{cond_several_axes_k_pole_pairs} and \autoref{cond_several_axes_polecaps} we have
$$
n^{\frac{2}{d+1}}\big(2a - \mathrm{diam}(\mathbf{Z}_n )\big) \overset{\mathcal{D}}{\longrightarrow }
\min_{1 \le m \le k} Z^{(m)},
$$
with independent random variables $Z^{(1)},\ldots,Z^{(k)}$, fulfilling
\begin{equation*}
Z^{(m)}\overset{\mathcal{D}}{= }  \min_{i,j \ge 1} \left\{ \mathcal{X}_{i,1}^{(m)}  + \mathcal{Y}_{j,1}^{(m)} - \frac{1}{4a} \big|\widetilde {\mathcal{X}}_{i}^{(m)}- \widetilde {\mathcal{Y}}_{j}^{(m)}\big|^2\right\},
\end{equation*}
where all the Poisson processes $\left\{ \mathcal{X}_i^{(m)}, i \ge 1\right\} \overset{\mathcal{D}}{= }  \PRM\Big(p_\ell^{(m)}\cdot m_d\big|_{P\big(H_\ell^{(m)}\big)}\Big) $, $\left\{ \mathcal{Y}_j^{(m)}, j \ge 1\right\} \overset{\mathcal{D}}{= }  \PRM\Big(p_r^{(m)}\cdot m_d\big|_{P\big(H_r^{(m)}\big)}\Big) $, $m \in \left\{ 1,\ldots,k\right\}$, are independent. The same result holds true if we replace $ \mathrm{diam}(\mathbf{Z}_n )$ with $M_n$.
\end{theorem}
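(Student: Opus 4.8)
The plan is to reduce the statement to $k$ independent copies of \autoref{thm_main_result}, one for each pair of poles, by localising the maximum distance to small neighbourhoods of the $2k$ poles and exploiting the independence of a Poisson process on disjoint regions. First I would fix $\varepsilon>0$ small enough that the $2k$ closed balls $B_\varepsilon(x^{(m)}),B_\varepsilon(y^{(m)})$, $m\in\{1,\ldots,k\}$, are pairwise disjoint and that each $\phi^{(m)}$ carries $E^{(m)}:=E\cap\big(B_\varepsilon(x^{(m)})\cup B_\varepsilon(y^{(m)})\big)$ near its two poles onto a set of the form prescribed by \autoref{cond_shape_pole_caps}; this is possible since, as recalled before the theorem, $E$ has exactly $2k$ poles (Lemma~6.1 in \cite{Schrempp2017}), so in particular every cross-distance $|x^{(m)}-y^{(m')}|$, $|x^{(m)}-x^{(m')}|$, $|y^{(m)}-y^{(m')}|$ with $m\ne m'$ is strictly below $2a$. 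A compactness argument on $E\times E$ then produces $c>0$ such that $|x-y|>2a-c$ forces $(x,y)$ or $(y,x)$ into $B_\varepsilon(x^{(m)})\times B_\varepsilon(y^{(m)})$, hence into $E^{(m)}\times E^{(m)}$, for some $m$: any sequence $(x_j,y_j)\in E\times E$ with $|x_j-y_j|\to 2a$ has a subsequential limit $(x_*,y_*)$ with $|x_*-y_*|=\diam(E)$, so $\{x_*,y_*\}=\{x^{(m)},y^{(m)}\}$ for some $m$, and the cross-distance bound rules out every other configuration. Since $\max_{(i,j)\in I_n^{(1)}}|Z_i-Z_j|\to 2a$ almost surely (the density is positive near $x^{(1)}$ and $y^{(1)}$, so $I_n^{(1)}\ne\emptyset$ eventually and the diameter of the points falling in $E^{(1)}$ increases to $2a$), it follows that, almost surely for all large $n$,
\begin{equation*}
2a-\diam(\mathbf{Z}_n)=\min_{1\le m\le k}\Big(2a-\max_{(i,j)\in I_n^{(m)}}|Z_i-Z_j|\Big).
\end{equation*}

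Next I would verify that each $E^{(m)}$, moved by $\phi^{(m)}$ into standard position, satisfies Conditions~\ref{cond_unique_diameter}--\ref{cond_density_p_l_p_r}: \autoref{cond_unique_diameter} holds with the same $a$ because any point of $E^{(m)}$ other than $x^{(m)},y^{(m)}$ is not a pole of $E$, so by \eqref{eq_cond_unique_diameter} its distance to every point of $E\supset E^{(m)}$ is $<2a$; Conditions~\ref{cond_shape_pole_caps} and \ref{cond_A_eta_pos_semi_definite} carry over from $\phi^{(m)}(E)$ because $\phi^{(m)}(E^{(m)})$ coincides with $\phi^{(m)}(E)$ in a neighbourhood of each of its poles, so its boundary functions there have the Hessians $H_\ell^{(m)},H_r^{(m)}$ and the matrices $A(\con)$ are unchanged; and $f^{(m)}$ restricted to $\phi^{(m)}(E^{(m)})$ and renormalised is continuous at the poles with positive values, by \autoref{cond_several_axes_polecaps}. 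Since $E^{(1)},\ldots,E^{(k)}$ are pairwise disjoint, the restrictions $\mathbf{Z}_n\big(\cdot\cap E^{(m)}\big)$ are independent Poisson processes, hence the quantities $2a-\max_{(i,j)\in I_n^{(m)}}|Z_i-Z_j|$, $m=1,\ldots,k$, are independent for each $n$. Writing $q_m:=\P_Z(E^{(m)})\in(0,1)$, the process $\mathbf{Z}_n\big(\cdot\cap E^{(m)}\big)$ is, after $\phi^{(m)}$, exactly a process of the type treated in \autoref{thm_main_result}, except that its Poisson parameter is $N_m:=nq_m$ and its pole densities are $p_\ell^{(m)}/q_m,\,p_r^{(m)}/q_m$; \autoref{thm_main_result} (with the evident extension to real Poisson parameters tending to infinity) then yields that $N_m^{2/(d+1)}\big(2a-\max_{(i,j)\in I_n^{(m)}}|Z_i-Z_j|\big)$ converges in distribution to $\min_{i,j\ge1}\{G(\mathcal{X}_i,\mathcal{Y}_j)\}$, where $G(x,y)=x_1+y_1-\frac{1}{4a}|\widetilde x-\widetilde y|^2$ and $\mathcal{X}\overset{\mathcal{D}}{=}\PRM\big((p_\ell^{(m)}/q_m)\,m_d\big|_{P(H_\ell^{(m)})}\big)$, $\mathcal{Y}\overset{\mathcal{D}}{=}\PRM\big((p_r^{(m)}/q_m)\,m_d\big|_{P(H_r^{(m)})}\big)$. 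Because $n^{2/(d+1)}=q_m^{-2/(d+1)}N_m^{2/(d+1)}$, and because the rescaling $z\mapsto(t^2z_1,t\widetilde z)$ with $t:=q_m^{-1/(d+1)}$ maps each $P(H_i^{(m)})$ onto itself, multiplies $m_d$ restricted to it by $t^{-(d+1)}=q_m$, and multiplies $G$ by $t^2$, multiplying the above limit by $q_m^{-2/(d+1)}$ converts the intensities $p_i^{(m)}/q_m$ back to $p_i^{(m)}$ and produces exactly $Z^{(m)}$; thus $n^{2/(d+1)}\big(2a-\max_{(i,j)\in I_n^{(m)}}|Z_i-Z_j|\big)\overset{\mathcal{D}}{\longrightarrow}Z^{(m)}$.

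Finally I would assemble the coordinates: by the independence noted above together with the $k$ marginal convergences just obtained, the random vector of the $k$ quantities $n^{2/(d+1)}\big(2a-\max_{(i,j)\in I_n^{(m)}}|Z_i-Z_j|\big)$ converges in distribution to $(Z^{(1)},\ldots,Z^{(k)})$ with independent coordinates; applying the continuous map $\min$ and the almost sure identity displayed above gives the assertion for $\diam(\mathbf{Z}_n)$, and de-Poissonisation via Theorem~3.2 in \cite{MayerMol2007}, applied to $\Psi(\mathbf{Z}_n)=2a-\diam(\mathbf{Z}_n)$ exactly as at the end of the proof of \autoref{thm_main_result}, transfers everything to $M_n$. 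The routine part is the localisation; the step deserving the most care is checking that the two-ball set $E^{(m)}$ genuinely inherits the unique-diameter condition (which rests on $E$ having exactly $2k$ poles and on the cross-distances between poles of different pairs being $<2a$), together with the bookkeeping that the normalising constants $q_m$ disappear from the limit via the harmless deterministic rescaling of the limiting Poisson intensities.
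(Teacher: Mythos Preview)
Your proposal is correct and follows essentially the same route as the paper: localise to the pairwise disjoint two-ball sets $E^{(m)}$, use the independence of the Poisson process $\mathbf{Z}_n$ on disjoint regions, apply \autoref{thm_main_result} to each $E^{(m)}$ after moving it via $\phi^{(m)}$, and combine via the minimum (with de-Poissonisation for $M_n$). Your treatment is in fact more explicit than the paper's exposition in two places---the compactness argument showing that any near-diameter pair must lie in some $E^{(m)}\times E^{(m)}$, and the bookkeeping for the normalising constants $q_m$ via the scaling $z\mapsto(t^2z_1,t\widetilde z)$---both of which the paper leaves implicit.
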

\vspace{2mm}

See \cite{Schrempp2017} for an application of this theorem to the ball of radius
$r > 0$ with respect to the $p$-norm for $p > 2$.

\subsection[Ellipsoids with no unique major half-axis]{Ellipsoids with no unique major half-axis }

\label{subsec_ellipsoid_several_major_axes}
In this subsection, we fix $d \ge 3$ and $e \in \left\{ 2,\ldots,d-1\right\}$ and consider the  $d$-dimensional ellipsoid $E$ with half-axes $a_1 = \ldots =a_e = 1$ and $1 > a_{e+1} \ge \ldots \ge a_{d}$, formally:
$$
E = \left\{ z \in \R^d: z_1^2 + \ldots +z_e^2 + \left( \frac{z_{e+1}}{a_{e+1}}\right)^2 + \ldots + \left( \frac{z_d}{a_d}\right)^2 \le 1\right\}.
$$
There is no loss of generality in assuming that the $e$ major half-axes have length $1$. Otherwise, one would only have to scale $E$ and $M_n$ in a suitable way. We assume that the points $Z_1,Z_2,\ldots$ are independent and identically distributed according to a Pearson Type II distribution with parameter $\beta > -1$ on $\text{int}(E)$. This means that the density of $Z_1$ is given by
$$
f(z) = c_1 \cdot \left(1-z^\top \Sigma^{-1}z\right)^{\beta} \cdot \ind\left\{ z \in \text{int}(E)\right\},
$$
where $\Sigma:= \diag(1,\ldots,1,a_{e+1}^2,\ldots,a_d^2) \in \R^{d \times d}$ and
$$
c_1 := \frac{\Gamma\left( \frac{d}{2}+\beta + 1\right)}{\Gamma\left( \beta +1\right)\pi^{\frac{d}{2}}\prod_{i=e+1}^{d}a_i},
$$
see \autoref{ex_pearson_type_II} and recall $a_1 = \ldots = a_e = 1$. Notice that we could use $E$ itself instead of $\text{int}(E)$ as support of $f$ for $\beta \ge 0$. But, since $\partial E$ has no influence at all on the limiting behavior of $M_n$ in our setting, the consideration of $\text{int}(E)$ instead of $E$ means no loss of generality.
In this setting, we cannot state an exact limit theorem for
$$
M_n = \max_{1\le i,j \le n}|  Z_i -  Z_j|.
$$
However, by considering the projections $\overline Z_1, \overline Z_2,\ldots $ of $Z_1,Z_2,\ldots$ onto the first $e$ components and investigating
$$
\overline M_n := \max_{1\le i,j \le n}| \overline Z_i - \overline Z_j|,
$$
we can establish bounds for the unknown limit distribution, if it exists. To this end, we consider $\R^d$ as $\R^e \times \R^{d-e}$ and write $\overline z := (z_1,\ldots,z_e)$ for $z =(z_1,\ldots,z_d) \in \R^d$.
In the same way, we put $\overline Z_n := (Z_{n,1},\ldots,Z_{n,e})$ for $Z_n = (Z_{n,1},\ldots,Z_{n,d})$ and $n\in\N$. Obviously, the random variables
$\overline Z_1, \overline Z_2, \ldots $ are independent and identically distributed.
Taking some orthogonal matrix $Q_{e} \in \R^{e \times e}$ and putting $Q := \diag(Q_{e},\mathrm{I}_{d-e})$, the special form of $\Sigma$ yields
$$
f(Qz) = c_1 \cdot \left(1-z^\top Q^\top \Sigma^{-1}Qz\right)^{\beta} = c_1 \cdot \left(1-z^\top \Sigma^{-1}z\right)^{\beta} = f(z)
$$
for each $z \in \text{int}(E)$, and we can conclude that the distribution of $\overline{Z}_1,\overline{Z}_2,\ldots $
is spherically symmetric on the unit ball $\B^{e}$. In addition to that, the proof of \autoref{lem_several_major_axes_necessary_asymp_holds} (given in \cite{Schrempp2017}) reveals that this distribution solely depends on $d,e$ and $\beta$, \emph{not} on $a_{e+1},\ldots,a_d$. \\

The great advantage of assuming $a_1 = \ldots =a_e = 1$ is that we can directly apply Corollary 3.7 in~\cite{Lao2010} for the maximum distance of the random points $\overline Z_1,\overline Z_2,\ldots$  lying in the $e$-dimensional unit ball $\B^e$.
For this purpose we write $\omega_e$ for its volume and obtain the following result:
\begin{lemma}
\label{lem_several_major_axes_necessary_asymp_holds}
With
\begin{align*}
  a   
  &:= \frac{\Gamma\left( \frac{d}{2}+\beta+1\right)}{\Gamma\left( \frac{d-e}{2}+\beta+2\right)}\cdot \pi^{-\frac{e}{2}} \cdot e \cdot \omega_{e} \cdot 2^{\frac{d-e}{2}+\beta}
  \qquad \qquad \text{and} \qquad \qquad
  \alpha := \frac{d-e}{2} + \beta + 1,
\end{align*}
we have
$$
\P\big( 1 - |\overline Z_1| \le s\big) \sim as^\alpha
$$
as $s \downarrow 0$
\end{lemma}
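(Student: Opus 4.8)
The plan is to compute the marginal law of $\overline Z_1$ in closed form and then read off the behaviour of the lower tail of $|\overline Z_1|$ at $1$; Corollary~3.7 in \cite{Lao2010} is precisely what forces the statement into this shape, so what the lemma really requires is just this tail asymptotics.

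First I would marginalise. Writing $z = (\overline z,\check z)$ with $\check z = (z_{e+1},\ldots,z_d) \in \R^{d-e}$ and using $z^\top \Sigma^{-1}z = |\overline z|^2 + \sum_{k=e+1}^{d}(z_k/a_k)^2$, the density of $\overline Z_1$ at a point with $|\overline z| = r < 1$ is obtained by integrating $f$ over $\check z$; the substitution $u_k := z_k/a_k$ (Jacobian $\prod_{k=e+1}^{d}a_k$) turns this into $c_1\big(\prod_{k=e+1}^{d}a_k\big)\int_{\{|u|^2<1-r^2\}}(1-r^2-|u|^2)^{\beta}\,\mathrm{d}u$. Rescaling $u = \sqrt{1-r^2}\,v$ and invoking the standard identity $\int_{\B^{d-e}}(1-|v|^2)^{\beta}\,\mathrm{d}v = \pi^{(d-e)/2}\Gamma(\beta+1)/\Gamma\big(\tfrac{d-e}{2}+\beta+1\big)$, together with the value of $c_1$ from \autoref{ex_pearson_type_II}, a short computation gives that $\overline Z_1$ has density $\overline z \mapsto \Gamma\big(\tfrac d2+\beta+1\big)\big(\pi^{e/2}\Gamma(\gamma+1)\big)^{-1}(1-|\overline z|^2)^{\gamma}$ on $\B^e$, where $\gamma := \tfrac{d-e}{2}+\beta \in(-1,\infty)$. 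In other words $\overline Z_1$ is itself Pearson Type~II on $\B^e$ with parameter $\gamma$; this incidentally establishes the claim (used elsewhere) that its law depends only on $d$, $e$ and $\beta$.

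Next I would pass to polar coordinates in $\R^e$: since the sphere of radius $r$ carries surface measure $e\,\omega_e\,r^{e-1}$, for $s \in (0,1)$ one gets
\[
\P\big(1-|\overline Z_1|\le s\big) = \frac{\Gamma\big(\tfrac d2+\beta+1\big)\,e\,\omega_e}{\pi^{e/2}\,\Gamma(\gamma+1)}\int_{1-s}^{1}(1-r^2)^{\gamma}r^{e-1}\,\mathrm{d}r .
\]
On $[1-s,1]$ we have $r^{e-1} = 1 + O(s)$ and $1-r^2 = 2(1-r)\big(1+O(s)\big)$ uniformly, so the integral equals $(1+o(1))\,2^{\gamma}\int_{1-s}^{1}(1-r)^{\gamma}\,\mathrm{d}r = (1+o(1))\,2^{\gamma}s^{\gamma+1}/(\gamma+1)$. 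Collecting constants and using $\Gamma(\gamma+2) = (\gamma+1)\Gamma(\gamma+1)$, $\gamma+1 = \tfrac{d-e}{2}+\beta+1 = \alpha$ and $2^{\gamma} = 2^{(d-e)/2+\beta}$ yields $\P(1-|\overline Z_1|\le s)\sim a\,s^{\alpha}$ with exactly the stated constant $a$. The proof is essentially this calculation; the only steps deserving a word of justification are the interchange of the marginalisation with the substitution in the first step and the uniformity of the two expansions on $[1-s,1]$, both immediate because the integrands are continuous and bounded on the relevant compact sets.
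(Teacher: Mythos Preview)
Your proposal is correct. The paper itself does not give a proof of this lemma; it defers to Subsection~6.2.2 of \cite{Schrempp2017}. From the paper's remark that ``the proof of \autoref{lem_several_major_axes_necessary_asymp_holds} \ldots\ reveals that this distribution solely depends on $d,e$ and $\beta$, \emph{not} on $a_{e+1},\ldots,a_d$'', the argument in \cite{Schrempp2017} evidently also computes the marginal of $\overline Z_1$ and identifies it as Pearson Type~II on $\B^e$ with parameter $\gamma=\tfrac{d-e}{2}+\beta$, exactly as you do. Your marginalisation (substitution $u_k=z_k/a_k$, rescaling $u=\sqrt{1-r^2}\,v$, evaluation of $\int_{\B^{d-e}}(1-|v|^2)^\beta\,\mathrm{d}v$) is the standard route and the constants match: the $\prod a_k$ factor cancels against $c_1$, giving a density depending only on $d$, $e$, $\beta$. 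The subsequent polar-coordinates step and the tail expansion $(1-r^2)^\gamma r^{e-1}\sim 2^\gamma(1-r)^\gamma$ on $[1-s,1]$ are straightforward and yield precisely the stated $a$ and $\alpha$ after the identity $\Gamma(\gamma+2)=(\gamma+1)\Gamma(\gamma+1)$. Nothing is missing; the only delicacy---integrability for $\beta\in(-1,0)$---is handled because $\gamma=\tfrac{d-e}{2}+\beta>-1$ whenever $e\le d-1$ and $\beta>-1$.
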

The proof of this lemma can be found in Subsection 6.2.2 of \cite{Schrempp2017}.
In view of  Corollary 3.7 in~\cite{Lao2010} we define
$$
\sigma := \frac{2^{e-2}\Gamma\left( \frac{e}{2}\right)a^2\Gamma(\alpha+1)^2}{\sqrt{\pi}\Gamma\left( \frac{e+1}{2}+2\alpha\right)}
$$
with $a$ and $\alpha$ given by \autoref{lem_several_major_axes_necessary_asymp_holds}, and we put
\begin{align}
  b_n &:= \left( \frac{\sigma}{2} \right)^{\frac{2}{2d-e+4\beta+3}}\cdot n^{\frac{4}{2d-e+4\beta+3}}, \quad n \ge 1. \label{eq_several_maor_axes_ell_def_b_n}
  \intertext{Furthermore, we let}
  G(t) &:= 1 - \exp\left( -t^{\frac{2d  - e + 4 \beta +3}{2}}\right)
  \label{eq_several_maor_axes_ell_def_G}
\end{align}
for $t \ge 0$.
With Corollary 3.7 in~\cite{Lao2010} and \autoref{lem_several_major_axes_necessary_asymp_holds} we get
\begin{equation}
\label{eq_several_major_axes_conv_projections}
\P\big(b_n(2 - \overline M_n ) \le t\big)\to G(t).
\end{equation}

But, since our focus lies on the asymptotic behavior of of $\max_{1\le i,j \le n}|  Z_i -  Z_j|$, \emph{not} on that of $\max_{1\le i,j \le n}| \overline Z_i - \overline Z_j|$, we have to find some useful relation between these two random variables. The key to success will be the following lemma, which provides bounds for $|x-y|$, $x,y \in E$, that depend merely on $\overline x$, $\overline y$ and the half-axis $a_{e+1}$.
\begin{lemma}
\label{lem_no_unique_major_axis_geom_ineq}
Putting
$$
g(\overline x, \overline y) := \sqrt{\big(|\overline x| + |\overline y|\big)^2 + 2a_{e+1}^2\big( 2- |\overline x|^2 - |\overline y|^2\big)},
$$
we have
$$
|\overline x - \overline y| \le |x-y| \le g(\overline x, \overline y)
$$
for all $x,y \in E$.
\end{lemma}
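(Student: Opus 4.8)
I would start by splitting each point according to the block structure $\R^d = \R^e \times \R^{d-e}$ that is already fixed in this subsection: for $x \in E$ write $x = (\overline x, \widehat x)$ with $\overline x = (x_1,\ldots,x_e)$ and $\widehat x = (x_{e+1},\ldots,x_d)$, and similarly for $y$. Then $|x-y|^2 = |\overline x - \overline y|^2 + |\widehat x - \widehat y|^2$, and the lower bound $|\overline x - \overline y| \le |x-y|$ is immediate since $|\widehat x - \widehat y|^2 \ge 0$.

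For the upper bound the only work is to control the tail block $|\widehat x - \widehat y|$. Since $x \in E$, we have $|\overline x|^2 + \sum_{k=e+1}^d (x_k/a_k)^2 \le 1$, hence $\sum_{k=e+1}^d (x_k/a_k)^2 \le 1 - |\overline x|^2$; because $0 < a_k \le a_{e+1}$ for $k \ge e+1$ we get $\sum_{k=e+1}^d x_k^2 \le a_{e+1}^2 \sum_{k=e+1}^d (x_k/a_k)^2 \le a_{e+1}^2(1-|\overline x|^2)$, i.e. $|\widehat x|^2 \le a_{e+1}^2(1-|\overline x|^2)$, and likewise $|\widehat y|^2 \le a_{e+1}^2(1-|\overline y|^2)$. (Note $x,y\in E$ forces $|\overline x|,|\overline y|\le 1$, so all these quantities are nonnegative and $g(\overline x,\overline y)$ is a genuine real number.) Now apply the triangle inequality and AM--GM: $|\widehat x - \widehat y|^2 \le \big(|\widehat x| + |\widehat y|\big)^2 = |\widehat x|^2 + |\widehat y|^2 + 2|\widehat x|\,|\widehat y| \le 2\big(|\widehat x|^2 + |\widehat y|^2\big) \le 2a_{e+1}^2\big(2 - |\overline x|^2 - |\overline y|^2\big)$.

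Combining this with the elementary bound $|\overline x - \overline y|^2 \le (|\overline x| + |\overline y|)^2$ yields
\[
|x-y|^2 = |\overline x - \overline y|^2 + |\widehat x - \widehat y|^2 \le \big(|\overline x| + |\overline y|\big)^2 + 2a_{e+1}^2\big(2 - |\overline x|^2 - |\overline y|^2\big) = g(\overline x, \overline y)^2,
\]
and taking square roots (legitimate since both sides are nonnegative) gives $|x-y| \le g(\overline x,\overline y)$, completing the proof. There is really no serious obstacle here: the inequality is purely elementary, and the only point that requires a moment's care is the chain $2|\widehat x|\,|\widehat y| \le |\widehat x|^2+|\widehat y|^2$ together with the coordinatewise comparison $a_k \le a_{e+1}$, which is where the choice of the constant $a_{e+1}^2$ (rather than a smaller $a_k^2$) in the definition of $g$ is used.
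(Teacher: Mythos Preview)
Your argument is correct. The paper does not actually prove this lemma in the text---it defers to Subsection~6.2.2 of \cite{Schrempp2017}---so there is no in-paper proof to compare against, but your elementary approach (Pythagorean splitting $|x-y|^2 = |\overline x - \overline y|^2 + |\widehat x - \widehat y|^2$, the ellipsoid constraint to bound $|\widehat x|^2 \le a_{e+1}^2(1-|\overline x|^2)$ via $a_k \le a_{e+1}$ for $k \ge e+1$, then AM--GM and the triangle inequality) is complete and presumably what the referenced dissertation does as well.
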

The proof of this lemma can be found in Subsection 6.2.2 of \cite{Schrempp2017}.
Using the convergence given in~\eqref{eq_several_major_axes_conv_projections} and \autoref{lem_no_unique_major_axis_geom_ineq}, we can now state the main result of this section:
\vspace{2mm}
\begin{theorem}
\label{thm_several_major_axes_main_thm}
Under the standing assumptions of this section we have
\begin{align}
  G(t) \ &\le \ \liminf_{n \to \infty} \P\big( b_n( 2- M_n) \le t\big)
  \ \le  \ \limsup_{n \to \infty} \P\big( b_n( 2- M_n) \le t\big)\label{eq_several_major_axes_assertion_main_theorem}
  \ \le \ G\left( \frac{t}{ 1-a_{e+1}^2}\right), \qquad t \ge 0,
\end{align}
where $b_n$ and $G$ are given in \eqref{eq_several_maor_axes_ell_def_b_n} and \eqref{eq_several_maor_axes_ell_def_G}, respectively.
\end{theorem}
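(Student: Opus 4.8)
The plan is to prove the two nontrivial inequalities in \eqref{eq_several_major_axes_assertion_main_theorem} separately, in each case comparing $M_n$ with the projected maximum $\overline M_n$ and then invoking the convergence \eqref{eq_several_major_axes_conv_projections}. Fix $t\ge 0$, abbreviate $a:=a_{e+1}\in(0,1)$ and $s:=t/b_n$ (so $s\to 0$); the case $t=0$ is trivial, since $Z_1,Z_2,\ldots\in\mathrm{int}(E)$ forces $M_n<2$ almost surely, hence $\P(b_n(2-M_n)\le 0)=0=G(0)$, so assume $t>0$.

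For the $\liminf$-bound I would use only the left inequality of \autoref{lem_no_unique_major_axis_geom_ineq}: $|\overline Z_i-\overline Z_j|\le|Z_i-Z_j|$ for all $i,j$ gives $\overline M_n\le M_n$, hence $\{2-\overline M_n\le s\}\subseteq\{2-M_n\le s\}$ and $\P(b_n(2-M_n)\le t)\ge\P(b_n(2-\overline M_n)\le t)\to G(t)$; taking $\liminf_{n\to\infty}$ yields $G(t)\le\liminf_{n\to\infty}\P(b_n(2-M_n)\le t)$.

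The $\limsup$-bound is the substantive part, and the target is a deterministic inclusion of the form $\{2-M_n\le s\}\subseteq\{2-\overline M_n\le\tfrac{s}{1-a^2}(1+\rho(s))\}$ with $\rho(s)\to 0$ as $s\to 0$. To obtain it, let $(i^*,j^*)$ realise $M_n$ and work on $\{M_n\ge 2-s\}$. First, the right inequality of \autoref{lem_no_unique_major_axis_geom_ineq} gives $g(\overline Z_{i^*},\overline Z_{j^*})\ge M_n\ge 2-s$; expanding $4-g(\overline Z_{i^*},\overline Z_{j^*})^2$ in $p:=1-|\overline Z_{i^*}|$ and $q:=1-|\overline Z_{j^*}|$ and dropping a nonnegative remainder yields $(1-a^2)(p+q)\bigl(4-(p+q)\bigr)\le 4-g^2\le 4s$, so $p+q\le\tfrac{2s}{1-a^2}$ and, re-inserting this into $4-(p+q)$, even $p+q\le\tfrac{s}{1-a^2}(1+\rho_1(s))$ with $\rho_1(s)\to 0$. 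Second, $z\in E$ forces $\sum_{k>e}(z_k/a_k)^2\le 1-|\overline z|^2$, and since $a_k\le a$ for $k>e$ this gives $|z^{\perp}|^2\le a^2(1-|\overline z|^2)$ for the last $d-e$ coordinates $z^{\perp}$ of $z$ (this is the estimate underlying \autoref{lem_no_unique_major_axis_geom_ineq}). Hence $|Z_{i^*}^{\perp}-Z_{j^*}^{\perp}|^2\le 2a^2\bigl(2-|\overline Z_{i^*}|^2-|\overline Z_{j^*}|^2\bigr)\le 4a^2(p+q)$, and combining this with $M_n^2=|\overline Z_{i^*}-\overline Z_{j^*}|^2+|Z_{i^*}^{\perp}-Z_{j^*}^{\perp}|^2$ gives
$$
|\overline Z_{i^*}-\overline Z_{j^*}|^2\ \ge\ (2-s)^2-\frac{4a^2 s}{1-a^2}\bigl(1+\rho_1(s)\bigr)\ =\ 4-\frac{4s}{1-a^2}\bigl(1+\rho_2(s)\bigr),\qquad \rho_2(s)\to 0.
$$
Since $2\sqrt{1-u}\ge 2-u-2u^2$ for $u\in[0,\tfrac12]$, this finally yields $\overline M_n\ge|\overline Z_{i^*}-\overline Z_{j^*}|\ge 2-\tfrac{s}{1-a^2}(1+\rho(s))$ with $\rho(s)\to 0$, which is the claimed inclusion. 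Therefore $\P(b_n(2-M_n)\le t)\le\P\bigl(b_n(2-\overline M_n)\le\tfrac{t(1+\rho(t/b_n))}{1-a^2}\bigr)$; fixing $\varepsilon>0$, for all large $n$ this is $\le\P\bigl(b_n(2-\overline M_n)\le\tfrac{t(1+\varepsilon)}{1-a^2}\bigr)\to G\bigl(\tfrac{t(1+\varepsilon)}{1-a^2}\bigr)$ by \eqref{eq_several_major_axes_conv_projections}, so $\limsup_{n\to\infty}\P(b_n(2-M_n)\le t)\le G\bigl(\tfrac{t(1+\varepsilon)}{1-a^2}\bigr)$; letting $\varepsilon\downarrow 0$ and using continuity of $G$ gives the right inequality of \eqref{eq_several_major_axes_assertion_main_theorem}.

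The hard part is the $\limsup$-direction: the individual estimates are elementary, but they must be carried out sharply enough that the constant multiplying $s$ comes out as exactly $\tfrac{1}{1-a_{e+1}^2}$. This is why one must both re-insert the crude bound $p+q\le\tfrac{2s}{1-a^2}$ to improve it and use the second-order expansion $2\sqrt{1-u}=2-u+O(u^2)$ instead of the linear bound $2\sqrt{1-u}\ge 2(1-u)$, which would only produce the weaker factor $\tfrac{2}{1-a_{e+1}^2}$.
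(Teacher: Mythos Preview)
Your proof is correct, and the lower bound is handled exactly as in the paper. For the upper bound, however, you take a genuinely different route.

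The paper works with the pair $(\overline Z_n^1,\overline Z_n^2)$ achieving $\overline M_n$ and with the auxiliary function $g$: it computes the Taylor expansion of the ratio $\dfrac{2-g(\overline x,\overline y)}{2-|\overline x-\overline y|}$ near antipodal boundary points, shows that this ratio tends to $c:=1-a_{e+1}^2$ uniformly on $\{|\overline x-\overline y|\ge 2-\varepsilon\}$, and then passes through the chain
\[
\min_{i,j}\{2-|Z_i-Z_j|\}\ \ge\ \min_{i,j}\{2-g(\overline Z_i,\overline Z_j)\}\ \ge\ (c-\delta)\,\min_{i,j}\{2-|\overline Z_i-\overline Z_j|\}
\]
on a high-probability event $A_{n,\varepsilon}$.

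You instead fix the pair $(i^*,j^*)$ realising $M_n$ and derive a purely deterministic inclusion $\{2-M_n\le s\}\subset\{2-\overline M_n\le \tfrac{s}{1-a^2}(1+\rho(s))\}$ by elementary inequalities: first bounding $p+q$ via $4-g^2\le 4s$, then bounding the orthogonal part $|Z_{i^*}^{\perp}-Z_{j^*}^{\perp}|^2\le 4a^2(p+q)$, and finally extracting $|\overline Z_{i^*}-\overline Z_{j^*}|$ from the Pythagorean decomposition with the sharp square-root expansion. This avoids the Taylor expansion, the spherical-symmetry uniformity argument, and the conditioning event altogether; the constant $\tfrac{1}{1-a_{e+1}^2}$ drops out of a direct computation. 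The paper's approach, on the other hand, makes the geometric reason for the constant transparent (it is the limiting ratio of $2-g$ to $2-|\overline x-\overline y|$) and reuses the function $g$ from \autoref{lem_no_unique_major_axis_geom_ineq} in a more structural way. Both arguments finish identically by invoking \eqref{eq_several_major_axes_conv_projections} and the continuity of $G$.
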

\vspace{2mm}
See \cite{Schrempp2017} for the results of a simulation study.
Before we give the proof of \autoref{thm_several_major_axes_main_thm}, we want to state an important corollary:
\begin{corollary}
From \autoref{thm_several_major_axes_main_thm} we immediately know that the sequence
$$
\left( n^{\frac{4}{2d-e+4\beta+3}}(2-M_n) \right)_{n \in \N}
 $$
 is tight. So, if there are a positive sequence $(a_n)_{n \in \N}$ and a non-degenerate distribution function $F$ with $\P \big(a_n(2-M_n)\le t\big) \to F(t)$, $t \ge 0$, we can conclude that $a_n \sim c \cdot n^{\frac{4}{2d-e+4\beta+3}}$ for some fixed $c \in \R$.
\end{corollary}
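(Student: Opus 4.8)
The plan is to combine the two–sided estimate of \autoref{thm_several_major_axes_main_thm} with the convergence–of–types theorem. Throughout write $\gamma:=\tfrac{4}{2d-e+4\beta+3}$ and $W_n:=2-M_n$, and recall from \eqref{eq_several_maor_axes_ell_def_b_n} that $b_n=c_0\,n^{\gamma}$ for a fixed constant $c_0\in(0,\infty)$, and from \eqref{eq_several_maor_axes_ell_def_G} that $G$ is continuous on $[0,\infty)$, strictly increasing on $(0,\infty)$, with $G(0)=0$ and $G(t)\to 1$ as $t\to\infty$. First I would read off tightness of $(n^{\gamma}W_n)_n$ from \autoref{thm_several_major_axes_main_thm}: its left inequality gives $\liminf_{n}\P(b_nW_n\le t)\ge G(t)$ for every $t\ge 0$, hence $\limsup_{n}\P(b_nW_n>t)\le 1-G(t)\to 0$ as $t\to\infty$; since moreover each individual $W_n$ satisfies $0\le W_n\le\diam(E)=2$, the upper tails of $(b_nW_n)_n$ vanish uniformly, so $(b_nW_n)_n$, and therefore $(n^{\gamma}W_n)_n=(c_0^{-1}b_nW_n)_n$, is tight on $[0,\infty)$. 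This is the ``immediate'' part of the statement.

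Next I would analyse subsequential limits. By tightness, along a suitable subsequence $(n_k)$ one has $n_k^{\gamma}W_{n_k}\overset{\mathcal{D}}{\longrightarrow}V$ for some $[0,\infty)$–valued random variable $V$, and passing to the limit along $(n_k)$ in the bounds of \autoref{thm_several_major_axes_main_thm} yields, at every continuity point $t$ of the law of $V$,
\[
G(t)\ \le\ \P\bigl(c_0V\le t\bigr)\ \le\ G\!\Bigl(\frac{t}{1-a_{e+1}^2}\Bigr).
\]
Letting $t\downarrow 0$ and $t\to\infty$ and using the properties of $G$ listed above, this forces $\P(V=0)=0$, $\P(V<\infty)=1$ and precludes degeneracy of $V$ at any point of $(0,\infty)$; hence $V$ is non–degenerate. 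Now assume $\P(a_nW_n\le t)\to F(t)$ with $F$ non–degenerate. Writing $a_{n_k}W_{n_k}=\bigl(a_{n_k}n_k^{-\gamma}\bigr)\cdot\bigl(n_k^{\gamma}W_{n_k}\bigr)$, where the first factor is a deterministic positive number, the second factor converges in distribution to the non–degenerate $V$ while the product converges in distribution to the non–degenerate $F$; the convergence–of–types theorem (see e.g.~\cite[Proposition~0.2]{Resnick1987}) then yields $a_{n_k}n_k^{-\gamma}\to\lambda$ for some $\lambda\in(0,\infty)$, together with $F\overset{\mathcal{D}}{=}\lambda V$. Since this applies to every convergent subsequence of the tight sequence $(n^{\gamma}W_n)$, the deterministic sequence $(a_nn^{-\gamma})_n$ is bounded and bounded away from $0$, and each of its subsequential limits lies in $(0,\infty)$.

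The remaining step is to show that the limit $\lambda$ obtained above does not depend on the chosen subsequence, so that $(a_nn^{-\gamma})_n$ actually converges, say to $c\in(0,\infty)$; this gives $a_n\sim c\,n^{\gamma}$, with $c>0$ because $F$ is non–degenerate. I expect this uniqueness to be the main obstacle. If two subsequences produced limits $c_1<c_2$ with associated subsequential limits $V_1,V_2$ of $(n^{\gamma}W_n)$, then $F\overset{\mathcal{D}}{=}c_1V_1\overset{\mathcal{D}}{=}c_2V_2$ forces $V_2\overset{\mathcal{D}}{=}(c_1/c_2)V_1$, and inserting both $V_1$ and this rescaled copy into the displayed two–sided bound (using that $G$ is strictly increasing) only pins $c_1/c_2$ down to the nontrivial range $1-a_{e+1}^2\le c_1/c_2<1$. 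Closing this last gap is the delicate point: one must exclude that $(n^{\gamma}W_n)$ oscillates between two genuinely different scaling types of the fixed limit law $F$, and this is exactly where the hypothesis that $a_nW_n$ converges along the full sequence — not merely along subsequences — has to be exploited. Once $\lambda$ is seen to be subsequence–independent, the corollary follows.
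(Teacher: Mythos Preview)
The paper gives no proof of this corollary at all; it is stated as an immediate consequence of \autoref{thm_several_major_axes_main_thm}. Your tightness argument is correct and is the intended reading of the word ``immediately'': the left inequality in \eqref{eq_several_major_axes_assertion_main_theorem} together with $G(t)\to 1$ gives uniform control of the upper tails of $(b_nW_n)$, and nonnegativity of $W_n$ gives control of the lower tails. Your use of the convergence--of--types theorem to pass from tightness to information about $a_n/n^{\gamma}$ is also the natural route, and correctly yields that $a_n/n^{\gamma}$ is bounded and bounded away from zero, with every subsequential limit lying in $(0,\infty)$.

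The gap you identify is genuine and you have located it precisely. From the two--sided bound in \autoref{thm_several_major_axes_main_thm} alone, different subsequential limits $V_1,V_2$ of $(n^{\gamma}W_n)$ may differ by a scale factor anywhere in $[1-a_{e+1}^2,1]$, and $(a_n n^{-\gamma})$ could then oscillate by the reciprocal factor while $a_nW_n$ still converges along the full sequence to a single non--degenerate $F$. Nothing in \eqref{eq_several_major_axes_assertion_main_theorem} excludes this, and the hypothesis of full--sequence convergence of $a_nW_n$ does not close the gap either: it only forces $c_1V_1\overset{\mathcal{D}}{=}c_2V_2\overset{\mathcal{D}}{=}F$, which is compatible with $c_1\ne c_2$. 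In other words, what is rigorously available from the theorem is the weaker conclusion
\[
0\ <\ \liminf_{n\to\infty}\,a_n\,n^{-\gamma}\ \le\ \limsup_{n\to\infty}\,a_n\,n^{-\gamma}\ <\ \infty,
\]
with the ratio of the two extremes lying in $[1-a_{e+1}^2,1]$. The strict assertion $a_n\sim c\,n^{\gamma}$ would require either that $(n^{\gamma}W_n)$ actually converges in distribution (which \autoref{thm_several_major_axes_main_thm} does not claim), or some additional input beyond the bounds. Your analysis is thus sharper than the paper's own treatment; the corollary as written appears to be a slight overstatement, the substantive and fully justified content being that the scaling exponent $\gamma=\tfrac{4}{2d-e+4\beta+3}$ is determined.
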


\begin{proof}[of \autoref{thm_several_major_axes_main_thm}]
From \autoref{lem_no_unique_major_axis_geom_ineq} we have
$$
|\overline Z_i - \overline Z_j| \le  |Z_i-Z_j| \le g(\overline Z_i,\overline Z_j)
$$
for all $i,j \in \N$. These inequalities imply
$$
\max_{1\le i,j \le n} |\overline Z_i - \overline Z_j|
 \le \max_{1\le i,j \le n} |Z_i-Z_j|
 \le \max_{1\le i,j \le n} g(\overline Z_i,\overline Z_j)
$$
and thus
\begin{equation}
\label{eq_no_unique_major_axis_ineq_minima}
\min_{1\le i,j \le n}\left\{ 2 - g(\overline Z_i ,\overline Z_j) \right\}
 \le \min_{1\le i,j \le n}\left\{ 2 - | Z_i - Z_j| \right\}
 \le \min_{1\le i,j \le n}\left\{ 2 - | \overline  Z_i - \overline  Z_j| \right\}.
\end{equation}
Using~\eqref{eq_several_major_axes_conv_projections} and the upper inequality figuring in~\eqref{eq_no_unique_major_axis_ineq_minima} yields
\begin{align*}
  \P\left( b_n\Big( 2- \max_{1\le i,j \le n} |Z_i-Z_j|\Big) \le t\right)
  & = \P\left( 2- \max_{1\le i,j \le n} |Z_i-Z_j| \le \frac{t}{b_n}\right)\\
  & = \P\left( \min_{1\le i,j \le n}\left\{ 2 - | Z_i - Z_j| \right\} \le \frac{t}{b_n}\right)\\
  & \ge \P\left( \min_{1\le i,j \le n}\left\{ 2 - | \overline Z_i - \overline Z_j| \right\} \le \frac{t}{b_n}\right)\\
  & = \P\left(b_n\Big(2 - \max_{1\le i,j \le n}| \overline Z_i - \overline Z_j| \Big) \le t\right)\\
  & \to G(t).
\end{align*}
Hence, the lower bound stated in~\eqref{eq_several_major_axes_assertion_main_theorem} has already been obtained.
To establish the upper bound in~\eqref{eq_several_major_axes_assertion_main_theorem}, we consider $\R^e \times \R^{e}$. For $(\overline x, \overline y)\in \B^e \times \B^{e} $ close to $ \mathbf{a} :=(-1,\mathbf{0} ,1,\mathbf{0} ) \in \R^{2e}$ we have, putting
$$
c := 1 - a_{e+1}^2,
$$
the multivariate Taylor series expansions
\begin{align*}
  2 - g(\overline x, \overline y) &= c \cdot\left( 2 + x_1 - y_1\right) + o\big(|( \overline x, \overline y) - \mathbf{a} |\big),\\
  2 - |\overline x - \overline y| &= \left( 2 + x_1 - y_1 \right) + o\big(|( \overline x, \overline y) - \mathbf{a} |\big),
  \intertext{and}
  \frac{2 - g(\overline x,\overline y)}{2 - | \overline x - \overline y|} &= c + o\big(|( \overline x, \overline y) - \mathbf{a} |\big).
\end{align*}
By symmetry, we can conclude that
$$
\frac{2 - g(\overline x,\overline y)}{2 - | \overline x - \overline y|} \to c
$$
for $(\overline x, \overline y) \in \B^e \times \B^{e}$ with
$(\overline x, \overline y) \to (\mathbf{a}^*,-\mathbf{a}^*  )$ and $\mathbf{a}^* \in \partial\B^{e} $. Furthermore, the symmetry guarantees that, for each $\delta \in (0,c)$, we can find a positive $\varepsilon$ so that
$$
c - \delta \le \frac{2 - g(\overline x,\overline y)}{2 - | \overline x - \overline y|}
$$
for all $(\overline x,\overline y) \in \B^{e} \times \B^{e}$ with $|\overline x - \overline y| \ge 2-\varepsilon$. For $n \in \N$, we write $\overline Z_{n}^{1}$ and $\overline Z_{n}^{2}$ for those elements of $\left\{ \overline Z_1,\ldots,\overline Z_n\right\}$ with
$$
\max_{1 \le i,j\le n}|\overline Z_i - \overline Z_j| = \big|\overline Z_{n}^{1} - \overline Z_{n}^{2}\big|.
$$
Based on these two random variables, we define for $\varepsilon$ given above the set
$$
A_{n,\varepsilon} := \left\{ \big|\overline Z_{n}^{1} - \overline Z_{n}^{2}\big| > 2 - \varepsilon \right\}.
$$
Obviously, $ \P(A_{n,\varepsilon}^c) \to 0$, and the event $A_{n,\varepsilon}$ entails
$$
c - \delta \le \frac{2 - g\big(\overline Z_{n}^{1},\overline Z_{n}^{2}\big)}{2 - \big| \overline Z_{n}^{1} - \overline Z_{n}^{2}\big|}.
$$
Together with the lower inequality given in~\eqref{eq_no_unique_major_axis_ineq_minima} we obtain
\begin{align*}
  \P\left( b_n\Big( 2- \max_{1\le i,j \le n} |Z_i-Z_j|\Big) \le t\right)
  \le\ &\P\left( b_n\min_{1\le i,j \le n}\left\{ 2 - | Z_i - Z_j| \right\} \le  t , A_{n,\varepsilon}\right) + \P(A_{n,\varepsilon}^c) \\
  \le\ & \P\left( b_n\min_{1\le i,j \le n}\left\{ 2 - g(\overline Z_i,\overline Z_j) \right\} \le t , A_{n,\varepsilon}\right) + \P(A_{n,\varepsilon}^c)\\
  =\ &\P\left( b_n\min_{1\le i,j \le n}\left\{ \left( 2 - | \overline Z_i - \overline Z_j|\right) \cdot \frac{2 - g(\overline Z_i,\overline Z_j)}{2 - | \overline Z_i - \overline Z_j|} \right\} \le t , A_{n,\varepsilon}\right) + \P(A_{n,\varepsilon}^c) \\
  \le \ &\P\left( b_n\min_{1\le i,j \le n}\left\{ \left( 2 - | \overline Z_i - \overline Z_j|\right) \cdot (c-\delta)\right\} \le  t , A_{n,\varepsilon}\right) + \P(A_{n,\varepsilon}^c) \\
  \le \ &\P\left(b_n\Big(2 - \max_{1\le i,j \le n}| \overline Z_i - \overline Z_j| \Big) \le \frac{t}{c-\delta}\right) + \P(A_{n,\varepsilon}^c) \\
  \to \ & G\left( \frac{t}{c-\delta}\right).
\end{align*}
Since $\delta$ can be chosen arbitrarily close to $0$, the continuity of $G$ implies
$$
\limsup_{n\to \infty} \P\left( b_n\Big( 2- \max_{1\le i,j \le n} |Z_i-Z_j|\Big) \le t\right) \le G\left( \frac{t}{c} \right),
$$
and the proof is finished.
\qed
\end{proof}

\section{Appendix}
\label{sec_appendix}
\begin{proof}[of \autoref{lem_first_part_deriv_are_0_and_Hessian_pos_def}]
We only consider $i= \ell$. It is clear that $H_\ell$ is symmetric, since $s^\ell$ is a twice continuously differentiable function.
From \autoref{cond_unique_diameter} we know that
\begin{equation}
\label{eq_E_subset_ball_Radius_2a_at_right_pole}
E \subset B_{2a}\big( (a,\mathbf{0} )\big) \qquad \text{and} \qquad  E \cap \partial B_{2a}\big( (a,\mathbf{0} )\big) = \left\{ (-a,\mathbf{0} )\right\}.
\end{equation}
Writing $O_t := \left\{ \widetilde z \in \R^{d-1}: |\widetilde z|<2a \right\}$ and defining the mapping $t: O_t \to \R, \widetilde z \mapsto a - \sqrt{4a^2 - z_2^2- \ldots -z_d^2}$, the boundary of $B_{2a}\big( (a,\mathbf{0} )\big) $ in $ \left\{ z_1 < a\right\}$ can be parameterized as a hypersurface via
$$
\mathbf{t}:
\begin{cases}
  O_t \to \R^d,\\
  \widetilde z \mapsto \big(\ t( \widetilde z)\ ,\ \widetilde z\ \big).
\end{cases}
$$
For $j,k \in \left\{ 2,\ldots,d\right\}$, we obtain
\begin{align*}
  t_j(\widetilde z) &= (4a^2 - z_2^2 - \ldots - z_d^2)^{-\frac{1}{2}}\cdot z_j,\\
  t_{jk}(\widetilde z) &= (4a^2 - z_2^2 - \ldots - z_d^2)^{-\frac{3}{2}}\cdot z_jz_k + (4a^2 - z_2^2 - \ldots - z_d^2)^{-\frac{1}{2}}\cdot \delta_{jk}.
\end{align*}
Hence, $\nabla t(\mathbf{0} ) = \mathbf{0} $, and the Hessian of $t$ at $\mathbf{0} $ is given by $H_t := \frac{1}{2a}\mathrm{I}_{d-1} $. So, the second-order Taylor series expansion of $t$ at this point has the form
\begin{equation}
\label{eq_Taylor_function_t}
t(\widetilde z) = -a + \mathbf{0}^\top\widetilde z + \frac{1}{2}\widetilde z^\top H_t \widetilde z + R_t(\widetilde z),
\end{equation}
where $R_t (\widetilde z) = o\big(|\widetilde z|^2 \big)$. Furthermore, we have
\begin{equation}
\label{eq_Taylor_s_l}
  s^{\ell}(\widetilde z) =  -a +  \nabla s^{\ell}(\mathbf{0} )^\top \widetilde z +  \frac{1}{2}\widetilde z^\top H_\ell \widetilde z + R_\ell(\widetilde z),
\end{equation}
where $R_\ell (\widetilde z) = o\big(|\widetilde z|^2 \big)$.
In view of \eqref{eq_E_subset_ball_Radius_2a_at_right_pole} and \autoref{cond_shape_pole_caps}, we have $t(\widetilde z) <-a + s^{\ell}(\widetilde z)$ for each $\widetilde z \in O_\ell \backslash\left\{ \mathbf{0} \right\}$ (observe that \eqref{eq_E_subset_ball_Radius_2a_at_right_pole} ensures $O_\ell \subset O_t$). Using \eqref{eq_Taylor_function_t} and\eqref{eq_Taylor_s_l}, this inequality can be rewritten as
$$
-a +  \frac{1}{2}\widetilde z^\top H_t \widetilde z + R_t(\widetilde z)< -a +  \nabla s^{\ell}(\mathbf{0} )^\top \widetilde z +  \frac{1}{2}\widetilde z^\top H_\ell \widetilde z + R_\ell(\widetilde z),
$$
and hence
$$
 0< \nabla s^{\ell}(\mathbf{0} )^\top \widetilde z +  \frac{1}{2}\widetilde z^\top (H_\ell - H_t) \widetilde z + \big(R_\ell(\widetilde z) -R_t(\widetilde z) \big)
$$
for each $\widetilde z \in O_\ell\backslash\left\{ \mathbf{0} \right\}$. Since $R_\ell (\widetilde z)-R_t (\widetilde z) = o\big(|\widetilde z|^2 \big)$, this inequality shows $\nabla s^{\ell}(\mathbf{0} ) = \mathbf{0} $ and that the matrix $H_\ell - H_t$ is positive definite. Remembering $H_t = \frac{1}{2a}\mathrm{I}_{d-1} $, $H_\ell$ has to be positive
definite, too, and all eigenvalues of $H_\ell$ have to be larger than $1/2a$.
\qed
\end{proof}

Now we will show that \autoref{cond_A_eta_pos_semi_definite} really ensures the unique diameter of $E$ `close to the poles':

\begin{lemma}
\label{lem_suff_con_unique_diam}
Under Conditions~\ref{cond_shape_pole_caps} and \ref{cond_A_eta_pos_semi_definite}, \eqref{eq_cond_unique_diameter} holds true for $E$ replaced with $E \cap \left\{ |z_1| > a - \delta\right\}$ and $\delta >0 $ sufficiently small.
\end{lemma}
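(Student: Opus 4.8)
The plan is to split the pairs of points of $E'_\delta:=E\cap\{|z_1|>a-\delta\}$ into those lying near a single pole and those straddling both poles, disposing of the first type by a soft compactness argument and of the second by the quantitative estimates already assembled for \autoref{thm_main_result}. First I record that the conclusions of \autoref{lem_first_part_deriv_are_0_and_Hessian_pos_def} ($\nabla s^i(\mathbf{0})=\mathbf{0}$, and $H_i$ positive definite with every eigenvalue larger than $1/2a$) hold already under Conditions~\ref{cond_shape_pole_caps}--\ref{cond_A_eta_pos_semi_definite}: the gradient vanishes and $H_i\ge0$ because $\mathbf{0}$ is an interior minimum of the nonnegative $C^2$ map $s^i$, and testing $A(\con)\ge0$ on the coordinate vectors $(\mathbf{e}_{j-1},\mathbf{0})$ and $(\mathbf{0},\mathbf{e}_{j-1})$ forces $2a\con\kappa^i_j\ge1$, hence $\kappa^i_j>1/(2a\con)>1/2a$; consequently \autoref{lem_shifted_poles_in_xi_hat_P_H_i}, \autoref{lemma_R_is_little_o_of_G_tilde} and \autoref{rem_properties_con_hat} are available. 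Throughout I take $\delta\le\min\{\delta_\ell,\delta_r,a,\delta^{*}\}$ with $\delta^{*}$ as in \autoref{lem_shifted_poles_in_xi_hat_P_H_i}, so that \autoref{cond_shape_pole_caps} gives the disjoint decomposition $E'_\delta=E'_\ell\sqcup E'_r$ with $E'_\ell:=E_\ell\cap\{z_1<-a+\delta\}\subset E_{\ell,\delta}$ and $E'_r:=E_r\cap\{z_1>a-\delta\}\subset E_{r,\delta}$ (the pole caps of \eqref{eq_def_E_l_delta_E_r_delta}). Since $\mathrm{diam}(E)=2a$, every two points of $E'_\delta$ are already at distance $\le2a$, so only the strict inequality $|x-y|<2a$ for $x$ not a pole has to be checked.

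If $x$ and $y$ lie in the same cap, say $E'_\ell$ (the $E'_r$ case being symmetric), then $|x-y|\le\mathrm{diam}(E'_\ell)$, and I claim $\mathrm{diam}(E'_\ell)\to0$ as $\delta\downarrow0$. Because $\mathrm{diam}(E)=2a$, no point $(-a,\widetilde z)$ with $\widetilde z\ne\mathbf{0}$ can lie in $E$ — its distance to $(a,\mathbf{0})$ would be $\sqrt{4a^2+|\widetilde z|^2}>2a$ — so $s^\ell>0$ on $O_\ell\setminus\{\mathbf{0}\}$; the closures $\overline{E'_\ell}$ then form a decreasing family of nonempty compact subsets of $E$ whose intersection over all $\delta>0$ is the single point $(-a,\mathbf{0})$ (a point of this intersection has first coordinate $-a$, lies in $E_\ell$, hence equals $(-a,\widetilde z)$ with $s^\ell(\widetilde z)=0$, forcing $\widetilde z=\mathbf{0}$), and therefore the diameters vanish. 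Thus $\mathrm{diam}(E'_\ell),\mathrm{diam}(E'_r)<2a$ once $\delta$ is small enough.

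If instead $x\in E'_\ell$ and $y\in E'_r$ (the reverse being symmetric) then $(x,y)\in E_{\ell,\delta}\times E_{r,\delta}=E_\delta$, and \eqref{eq_2a_minus_norm_is_G_tilde_minus_R} gives $2a-|x-y|=\widetilde G(x,y)-R(x,y)$. By \autoref{lemma_R_is_little_o_of_G_tilde} I may shrink $\delta$ so that $|R(x,y)|\le\tfrac12\widetilde G(x,y)$ on $E_\delta$, whence $2a-|x-y|\ge\tfrac12\widetilde G(x,y)\ge0$. Writing $\widetilde x=U_\ell\alpha$, $\widetilde y=U_r\beta$ and running the chain of estimates from the proof of \autoref{lemma_R_is_little_o_of_G_tilde} — \eqref{eq_cond_1_unique_diam} to pass from $\widetilde G(x,y)$ to $(a+x_1)+(a-y_1)-\tfrac{\con}{2}\big(\widetilde x^\top H_\ell\widetilde x+\widetilde y^\top H_r\widetilde y\big)$, then \eqref{eq_shifted_poles_in_xi_hat_P_H_i_proof_help_1} of \autoref{lem_shifted_poles_in_xi_hat_P_H_i} — one obtains
\[
\widetilde G(x,y)\ \ge\ (1-\con\widehat\con)\big[(a+x_1)+(a-y_1)\big]\ =\ \tfrac{1-\con}{2}\big[(a+x_1)+(a-y_1)\big]\ \ge\ \tfrac{1-\con}{2}\,(a+x_1),
\]
using $1-\con\widehat\con=\tfrac{1-\con}{2}>0$ (\autoref{rem_properties_con_hat}) and $a-y_1\ge0$. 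Since $x\in E'_\ell$ is not the left pole, $a+x_1>0$ (indeed $a+x_1\ge s^\ell(\widetilde x)\ge0$ with equality only at $(-a,\mathbf{0})$), so $\widetilde G(x,y)>0$ and hence $|x-y|<2a$; in the mirror configuration the strictly positive term $a-x_1>0$ plays the same role.

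Finally, choosing $\delta$ below the finitely many thresholds used above yields \eqref{eq_cond_unique_diameter} with $E$ replaced by $E'_\delta$, which is the assertion. I expect the only delicate point to be the straddling case: one must ensure that the quadratic gain $\widetilde G(x,y)$ both stays uniformly positive and dominates the Taylor remainder $R(x,y)$ over the undistorted pole caps, which is exactly the role of \autoref{cond_A_eta_pos_semi_definite}, entering through \eqref{eq_cond_1_unique_diam} and the uniform estimate of \autoref{lemma_R_is_little_o_of_G_tilde}. The same-cap case is routine once one observes that the pole caps collapse onto the poles as $\delta\downarrow0$.
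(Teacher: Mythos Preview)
Your proof is correct but takes a genuinely different route from the paper's. The paper works directly with $|x-y|^2$ for boundary points $x=(-a+s^\ell(\widetilde x),\widetilde x)\in M_\ell$ and $y=(a-s^r(\widetilde y),\widetilde y)\in M_r$: expanding the square exactly gives $4a^2-(4a\Xi-\Xi^2)+|\widetilde x-\widetilde y|^2$ with $\Xi:=s^\ell(\widetilde x)+s^r(\widetilde y)$, and a dedicated auxiliary result (\autoref{lem_inequality_for_Xi}) bounds $4a\Xi-\Xi^2$ from below so that \eqref{eq_cond_1_unique_diam} closes the argument. Thus the paper Taylor-expands only the boundary functions $s^\ell,s^r$, never the distance. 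You instead Taylor-expand $|x-y|$ itself via \eqref{eq_2a_minus_norm_is_G_tilde_minus_R} and recycle the Section~\ref{sec_proof_main_thm} machinery (\autoref{lem_shifted_poles_in_xi_hat_P_H_i} and \autoref{lemma_R_is_little_o_of_G_tilde}), after observing---correctly---that those lemmas and the conclusions of \autoref{lem_first_part_deriv_are_0_and_Hessian_pos_def} need only Conditions~\ref{cond_shape_pole_caps} and~\ref{cond_A_eta_pos_semi_definite} (your $A(\con)\ge0$ argument on coordinate vectors is a nice way to recover $\kappa_j^i>1/(2a)$). Your approach is more economical in that it reuses existing estimates, at the cost of carrying the extra remainder $R(x,y)$; the paper's is more direct and self-contained within the Appendix.

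One caveat: in the same-cap part you invoke $\mathrm{diam}(E)=2a$ to deduce $s^\ell>0$ on $O_\ell\setminus\{\mathbf{0}\}$, but $\mathrm{diam}(E)=2a$ is \autoref{cond_unique_diameter} and is not assumed here. The fix is easy: the positive-definite Hessian you already extracted from $A(\con)\ge0$ makes $\mathbf{0}$ a strict local minimum of $s^i$, so after shrinking $O_i$ (and $\delta_i$) one has $s^i>0$ on $O_i\setminus\{\mathbf{0}\}$; this then also justifies the claim $a+x_1>0$ for $x\in E'_\ell\setminus\{(-a,\mathbf{0})\}$ in your straddling case. The paper's proof, which only treats $(\widetilde x,\widetilde y)$ ``sufficiently close to $\mathbf{0}$'' and then passes to small $\delta$, relies on the same implicit localization.
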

\begin{proof}
Since the diameter of $E$ cannot be determined by interior points, it suffices to investigate points on the boundaries $M_\ell$ and $M_r$ of the pole-caps of $E$.
To this end, let $(\widetilde x ,\widetilde y)\in O_\ell \times O_r\backslash \left\{ \mathbf{0}\right\}$. Invoking \eqref{eq_representation_E_l_Taylor} and \eqref{eq_representation_E_r_Taylor} and putting
 $$
 \Xi:= \frac{1}{2}\left(\widetilde x^\top  H_\ell \widetilde x + \widetilde y^\top  H_r \widetilde y \right) + R_\ell (\widetilde x) + R_r (\widetilde y),
 $$
we get
\begin{align*}
 \big| (-a + s^\ell(\widetilde x),\widetilde x ) - \big( a -s^r(\widetilde y),\widetilde y\big) \big|^2\
 = \ &\Big| \Big(-a + \frac{1}{2}\widetilde x^\top  H_\ell \widetilde x + R_\ell (\widetilde x),\widetilde x \Big) - \Big( a - \frac{1}{2}\widetilde y^\top  H_r \widetilde y - R_r (\widetilde y),\widetilde y\Big) \Big|^2\\
 = \ &\left( -2a + \Xi\right)^2 + |\widetilde x- \widetilde y|^2\\
 = \ &4a^2 - 4a\Xi + \Xi^2 + |\widetilde x- \widetilde y|^2\\
 = \ &4a^2 - \left( 4a\Xi -\Xi^2 \right) +|\widetilde x|^2 + | \widetilde y|^2 - 2\widetilde x^\top  \widetilde y.
 \end{align*}
\autoref{lem_inequality_for_Xi} will show that
\begin{equation}
\label{eq_inequality_for_Xi}
 4a\Xi - \Xi^2 > 2a\con\left( \widetilde x^\top  H_\ell \widetilde x + \widetilde y^\top  H_r \widetilde y \right)
\end{equation}
for every $(\widetilde x,\widetilde y) \ne \mathbf{0}$ sufficiently close to $\mathbf{0} $.
Representing the points $\widetilde x$ and $\widetilde y$ in terms of the bases $\left\{ \mathbf{u}_2^\ell,\ldots,\mathbf{u}_d^\ell \right\}$ and $\left\{ \mathbf{u}_2^r,\ldots,\mathbf{u}_d^r \right\}$, namely $\widetilde x = U_\ell \alpha$ and $\widetilde y = U_r \beta$, \eqref{eq_cond_1_unique_diam} gives
\begin{align*}
  \big| (-a + s^\ell(\widetilde x),\widetilde x ) - \big( a -s^r(\widetilde y),\widetilde y\big) \big|^2\
 < \ &4a^2 - 2a\con\left( \widetilde x^\top  H_\ell \widetilde x + \widetilde y^\top  H_r \widetilde y \right) + |\widetilde x|^2 + | \widetilde y|^2 - 2\widetilde x^\top  \widetilde y \\
 = \ &4a^2 - 2a\con\left( \alpha^\top  U_\ell^\top  H_\ell U_\ell \alpha  + \beta^\top  U_r^\top  H_r U_r \beta \right) + |U_\ell\alpha|^2 + | U_r \beta|^2 - 2\alpha^\top  U_\ell^\top  U_r \beta\\
 = \ &4a^2 - 2a\con\left( \alpha^\top  D_\ell \alpha  + \beta^\top  D_r \beta \right) - 2\alpha^\top  U_\ell^\top  U_r \beta+ |\alpha|^2 + |\beta|^2\\
 \le \ &4a^2.
 \end{align*}
Thus, for $\delta>0$ sufficiently small, the only pair of points in $E \cap \left\{ |z_1| > a - \delta\right\}$ with distance $2a$ is given by $(-a,\mathbf{0} )$ and $(a,\mathbf{0} )$, and the proof is finished.
\qed
\end{proof}

It remains to prove the validity of \eqref{eq_inequality_for_Xi}.
\begin{lemma}
\label{lem_inequality_for_Xi}
For  $\con \in \left( 0,1\right)$ and $(\widetilde x,\widetilde y) \ne \mathbf{0}$ sufficiently close to  $\mathbf{0} $ we have
\begin{align*}
 4a\Xi - \Xi^2 > 2a\con\left( \widetilde x^\top  H_\ell \widetilde x + \widetilde y^\top  H_r \widetilde y \right).
\end{align*}
\end{lemma}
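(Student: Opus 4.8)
The plan is to abbreviate $Q := \widetilde x^\top H_\ell \widetilde x + \widetilde y^\top H_r \widetilde y$ and $R := R_\ell(\widetilde x) + R_r(\widetilde y)$, so that $\Xi = \tfrac12 Q + R$ by the definition of $\Xi$. First I would record two elementary facts. Since $H_\ell$ and $H_r$ are positive definite by \autoref{lem_first_part_deriv_are_0_and_Hessian_pos_def}, the Courant--Fischer bounds \eqref{eq_Courant_Fischer} give $\min\{\kappa_2^\ell,\kappa_2^r\}\bigl(|\widetilde x|^2+|\widetilde y|^2\bigr) \le Q$; in particular $Q>0$ whenever $(\widetilde x,\widetilde y)\ne\mathbf{0}$, and $Q\to 0$ as $(\widetilde x,\widetilde y)\to\mathbf{0}$. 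Since $R_\ell(\widetilde x)=o(|\widetilde x|^2)$ and $R_r(\widetilde y)=o(|\widetilde y|^2)$, the same lower bound yields $R = o\bigl(|\widetilde x|^2+|\widetilde y|^2\bigr) = o(Q)$ as $(\widetilde x,\widetilde y)\to\mathbf{0}$.

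Next I would simply expand the left-hand side of the asserted inequality. Substituting $\Xi = \tfrac12 Q + R$ gives
\[
4a\Xi-\Xi^2 \;=\; 2aQ + 4aR - \tfrac14 Q^2 - QR - R^2 ,
\]
so the claim $4a\Xi-\Xi^2 > 2a\con Q$ is equivalent to
\[
2a(1-\con)Q + 4aR - \tfrac14 Q^2 - QR - R^2 \;>\; 0 .
\]
Dividing by $Q>0$, it suffices to show that
\[
2a(1-\con) + 4a\,\frac{R}{Q} - \tfrac14 Q - R - \frac{R^2}{Q} \;>\; 0
\]
for $(\widetilde x,\widetilde y)\ne\mathbf{0}$ close enough to $\mathbf{0}$.

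Finally, letting $(\widetilde x,\widetilde y)\to\mathbf{0}$ and invoking the two facts from the first step, we have $Q\to 0$, $R\to 0$, $R/Q\to 0$, and hence $R^2/Q = (R/Q)\cdot R\to 0$. Therefore the left-hand side of the last display converges to $2a(1-\con)$, which is strictly positive because $\con\in(0,1)$. Consequently there is a neighbourhood of $\mathbf{0}$ on which this quantity is positive, and \eqref{eq_inequality_for_Xi} follows for every $(\widetilde x,\widetilde y)\ne\mathbf{0}$ in that neighbourhood.

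The computation is entirely routine; the only point requiring a little care is making explicit that $Q$ dominates $|\widetilde x|^2+|\widetilde y|^2$ from below, so that every error term is genuinely $o(1)$ after division by $Q$ — but positive definiteness of $H_\ell$ and $H_r$ makes this immediate, and I do not anticipate any real obstacle.
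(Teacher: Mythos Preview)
Your argument is correct. You reduce the claim to showing that a certain expression, after dividing by $Q=\widetilde x^\top H_\ell\widetilde x+\widetilde y^\top H_r\widetilde y$, tends to the strictly positive constant $2a(1-\con)$; the only ingredients are $Q>0$, $Q\to 0$ and $R=o(Q)$, all of which follow from positive definiteness of $H_\ell,H_r$ and the Taylor remainders.

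The paper takes a slightly different route: it fixes $\varepsilon=\tfrac{1-\con}{2}$ in advance and proves two separate estimates, namely $4a\Xi>2a(1-\varepsilon)Q$ (from $|R_i|<\tfrac{\varepsilon}{2}\,\widetilde z^\top H_i\widetilde z$ near $\mathbf{0}$) and $\Xi^2\le 2a\varepsilon\,Q$ (from crude upper bounds via $\kappa_d^i$), and then adds them so that $1-2\varepsilon=\con$. Your single expansion-and-limit argument is more streamlined and avoids the somewhat artificial splitting into two inequalities; the paper's decomposition, on the other hand, gives explicit quantitative control of each piece, which is closer in spirit to how similar bounds are reused elsewhere (e.g.\ in \autoref{lem_shifted_poles_in_xi_hat_P_H_i}). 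Either way the substance is the same: $R=o(Q)$ and $Q=o(1)$ force $4a\Xi-\Xi^2=2aQ\bigl(1+o(1)\bigr)$, which dominates $2a\con Q$ for any $\con<1$.
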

\begin{proof}
Let $\varepsilon := \frac{1-\con}{2}>0$. Without loss of generality we assume  $\widetilde x \ne \mathbf{0} $.
For $\widetilde x$ sufficiently close to $\mathbf{0} $, \eqref{eq_Courant_Fischer} and $R_\ell (\widetilde x) = o\big(|\widetilde x|^2 \big)$ lead to
 $$
\big| R_\ell (\widetilde x)\big| <  \frac{\varepsilon}{2}\kappa_2^\ell|\widetilde x|^2 \le \frac{\varepsilon}{2}\widetilde x^\top  H_\ell \widetilde x,
 $$
 whence
 $$
 \frac{1}{2}\widetilde x^\top  H_\ell \widetilde x + R_\ell (\widetilde x) > \frac{1}{2}\widetilde x^\top  H_\ell \widetilde x - \frac{\varepsilon}{2}\widetilde x^\top  H_\ell \widetilde x = \frac{1-\varepsilon}{2}\widetilde x^\top  H_\ell \widetilde x .
 $$
 By the same reasoning for $\widetilde y$ we get
 $$
 \frac{1}{2}\widetilde y^\top  H_r \widetilde y + R_r (\widetilde y) \ge  \frac{1-\varepsilon}{2}\widetilde y^\top  H_r \widetilde y.
 $$
 Observe that, in the line above, equality holds if $\widetilde y = \mathbf{0} $.
 Putting both inequalities together yields
 $$
 \Xi > \frac{1-\varepsilon}{2}\left(\widetilde x^\top  H_\ell \widetilde x + \widetilde y^\top  H_r \widetilde y \right)
 $$
 and thus
 \begin{equation}
 \label{eq_inequality_Xi_1}
   4a\Xi > 2a(1-\varepsilon)\left(\widetilde x^\top  H_\ell \widetilde x + \widetilde y^\top  H_r \widetilde y \right).
 \end{equation}
 Since close to $\mathbf{0}$  both $\big|R_\ell (\widetilde x)\big| \le \frac{\kappa_d^\ell}{2} |\widetilde x|^2$ and $\big|R_r (\widetilde y) \big|\le \frac{\kappa_d^r }{2}|\widetilde y|^2$ hold true, \eqref{eq_Courant_Fischer} gives
 \begin{align*}
   \Xi^2 &= \left( \frac{1}{2}\left(\widetilde x^\top  H_\ell \widetilde x + \widetilde y^\top  H_r \widetilde y \right) + R_\ell (\widetilde x) + R_r (\widetilde y)\right)^2\\
   &\le \left( \frac{1}{2}\big( \kappa_d^\ell|\widetilde x|^2 + \kappa_d^r|\widetilde y|^2\big) + \frac{\kappa_d^\ell }{2}|\widetilde x|^2 + \frac{\kappa_d^r }{2}|\widetilde y|^2 \right)^2 \\
   &= \left(  \kappa_d^\ell|\widetilde x|^2 + \kappa_d^r|\widetilde y|^2 \right)^2 \\
   &\le  \max\left\{ \kappa_d^\ell,\kappa_d^r\right\}^2 \left(  |\widetilde x|^2 +|\widetilde y|^2 \right)^2.
 \end{align*}
 Using \eqref{eq_Courant_Fischer} again yields
 $$
  0 \le \frac{\left(  |\widetilde x|^2 +|\widetilde y|^2 \right)^2}{\widetilde x^\top  H_\ell \widetilde x + \widetilde y^\top  H_r \widetilde y } \le
  \frac{\left(  |\widetilde x|^2 +|\widetilde y|^2 \right)^2}{\kappa_2^\ell  |\widetilde x|^2  + \kappa_2^r  |\widetilde y|^2}.
 $$
 Since the fraction on the right-hand side tends to $0$ as $(\widetilde x,\widetilde y) \to \mathbf{0}$ we infer
 \begin{equation}
 \label{eq_inequality_Xi_2}
   \Xi^2 \le 2 a \varepsilon \left(\widetilde x^\top  H_\ell \widetilde x + \widetilde y^\top  H_r \widetilde y \right)
 \end{equation}
 for all $(\widetilde x,\widetilde y)$ sufficiently close to $\mathbf{0} .$
 From \eqref{eq_inequality_Xi_1} and \eqref{eq_inequality_Xi_2} we deduce that
 \begin{align*}
  4a\Xi - \Xi^2 &> 2a(1-\varepsilon)\left(\widetilde x^\top  H_\ell \widetilde x + \widetilde y^\top  H_r \widetilde y \right) - 2 a \varepsilon \left(\widetilde x^\top  H_\ell \widetilde x + \widetilde y^\top  H_r \widetilde y \right) \\
  &= 2a(1-2\varepsilon)\left(\widetilde x^\top  H_\ell \widetilde x + \widetilde y^\top  H_r \widetilde y \right),
 \end{align*}
 and since $1-2\varepsilon= 1- 2\frac{1-\con}{2}= \con$, the proof is finished.
 \qed
\end{proof}

Now we want to show that the matrix $A(1)$ is necessarily positive semi-definite. Otherwise, we would obtain a contradiction to \autoref{cond_unique_diameter}.

\begin{lemma}
\label{lem_A_1_has_to_be_pos_def}
Under Conditions~\ref{cond_unique_diameter} and \ref{cond_shape_pole_caps} we have $A(1) \ge 0$.
\end{lemma}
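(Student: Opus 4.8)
The plan is to reduce the statement to an elementary quadratic inequality relating $H_\ell$ and $H_r$, and then to read that inequality off from the constraint $\diam(E)=2a$ applied to points on the two pole-cap boundaries. Exactly as in the derivation of \eqref{eq_cond_1_unique_diam}, but now with $\con$ replaced by $1$, the matrix $A(1)$ is positive semi-definite if and only if
$$0 \le 2a\big(\alpha^\top D_\ell\alpha + \beta^\top D_r\beta\big) + 2\alpha^\top U_\ell^\top U_r\beta - |\alpha|^2 - |\beta|^2 \qquad \text{for all } \alpha,\beta \in \R^{d-1}.$$
Writing $\widetilde x = U_\ell\alpha$ and $\widetilde y = U_r\beta$ and using $U_\ell U_\ell^\top = U_r U_r^\top = \mathrm{I}_{d-1}$, $U_\ell D_\ell U_\ell^\top = H_\ell$, $U_r D_r U_r^\top = H_r$, this is equivalent to
$$|\widetilde x - \widetilde y|^2 \le 2a\big(\widetilde x^\top H_\ell\widetilde x + \widetilde y^\top H_r\widetilde y\big) \qquad \text{for all } \widetilde x,\widetilde y \in \R^{d-1}.$$
So everything comes down to proving this single inequality.

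To this end I would fix $\widetilde x \in O_\ell$ and $\widetilde y \in O_r$ and consider the boundary points $(-a + s^\ell(\widetilde x),\widetilde x) \in M_\ell \subset E$ and $(a - s^r(\widetilde y),\widetilde y) \in M_r \subset E$. By \autoref{cond_unique_diameter} their squared distance is at most $4a^2$, and the expansion already carried out in the proof of \autoref{lem_suff_con_unique_diam} identifies this squared distance as $4a^2 - (4a\Xi - \Xi^2) + |\widetilde x - \widetilde y|^2$, where $\Xi = \tfrac12\big(\widetilde x^\top H_\ell\widetilde x + \widetilde y^\top H_r\widetilde y\big) + R_\ell(\widetilde x) + R_r(\widetilde y)$. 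Bounding it by $4a^2$ and discarding the term $-\Xi^2 \le 0$ gives
$$|\widetilde x - \widetilde y|^2 \le 4a\Xi = 2a\big(\widetilde x^\top H_\ell\widetilde x + \widetilde y^\top H_r\widetilde y\big) + 4a\big(R_\ell(\widetilde x) + R_r(\widetilde y)\big)$$
for all $\widetilde x \in O_\ell$, $\widetilde y \in O_r$.

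It then remains only to remove the remainder terms, which I would do by a homogeneity argument: both sides of the target inequality are homogeneous of degree $2$ in the pair $(\widetilde x,\widetilde y)$, so for arbitrary $\widetilde x,\widetilde y \in \R^{d-1}$ I apply the last display to $(t\widetilde x, t\widetilde y)$ with $t>0$ small enough that $t\widetilde x \in O_\ell$ and $t\widetilde y \in O_r$, divide by $t^2$, and let $t \downarrow 0$; since $R_\ell(\widetilde z) = o(|\widetilde z|^2)$ and $R_r(\widetilde z) = o(|\widetilde z|^2)$ as $\widetilde z \to \mathbf{0}$, the remainder contribution $4a\,t^{-2}\big(R_\ell(t\widetilde x) + R_r(t\widetilde y)\big)$ vanishes in the limit, leaving $|\widetilde x - \widetilde y|^2 \le 2a\big(\widetilde x^\top H_\ell\widetilde x + \widetilde y^\top H_r\widetilde y\big)$. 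By the equivalence above this is precisely $A(1) \ge 0$. There is no genuine obstacle here: the only point requiring a little care is the treatment of the $o(\cdot)$-remainders, which the rescaling argument disposes of cleanly, and all the geometric input has already been assembled in \autoref{sec_proof_main_thm} and in \autoref{lem_first_part_deriv_are_0_and_Hessian_pos_def}.
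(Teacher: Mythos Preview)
Your proof is correct and takes a genuinely different route from the paper's. The paper argues by contradiction: assuming $A(1)\ngeq 0$, it perturbs to some $\eta^*>1$ with $A(\eta^*)\ngeq 0$, establishes the reverse inequality $4a\Xi-\Xi^2<2a\eta^*\big(\widetilde x^\top H_\ell\widetilde x+\widetilde y^\top H_r\widetilde y\big)$ (an analogue of \autoref{lem_inequality_for_Xi} with the inequality flipped), and then, along the bad direction of $A(\eta^*)$, produces boundary points whose distance exceeds $2a$, contradicting \autoref{cond_unique_diameter}. Your argument instead proceeds directly: you read off the elementary bound $|\widetilde x-\widetilde y|^2\le 4a\Xi$ from the diameter constraint and then kill the Taylor remainders by the homogeneity rescaling $(\widetilde x,\widetilde y)\mapsto(t\widetilde x,t\widetilde y)$, $t\downarrow 0$. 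This is shorter and more transparent; in particular it bypasses the need for the auxiliary parameter $\eta^*$ and the separate $\Xi$-inequality. The paper's approach, on the other hand, mirrors the structure of \autoref{lem_suff_con_unique_diam} and \autoref{lem_inequality_for_Xi} and thus fits naturally into the surrounding machinery, but at the cost of being less self-contained.
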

\begin{proof}
Assuming $A(1) \ngeq 0$, there exists $z \in \R^{2(d-1)}$ with
$ z^\top A(1)z <0$. Then, we can also find an $\con^*>1$ with
\begin{align*}
  z^\top A(\con^*)z
  &= z^\top \big( A(1) + 2a(\con^{*}-1) \diag(D_\ell,D_r) \big)z\\
  &= z^\top A(1)z + (\con^{*}-1) 2a z^\top \diag(D_\ell,D_r)z \\
  &< 0,
\end{align*}
which entails $A(\con^*) \ngeq 0$. Notice that $(\con^{*}-1)2az^\top \diag(D_\ell,D_r)z > 0$ can be made arbitrarily small by choosing $\con^{*}$  sufficiently close to $1$.
In a similar way as in the proof of \autoref{lem_inequality_for_Xi}, one can show
$$
 4a\Xi - \Xi^2 < 2a\con^*\left( \widetilde x^\top  H_\ell \widetilde x + \widetilde y^\top  H_r \widetilde y \right)
$$
for all $(\widetilde x,\widetilde y) \ne \mathbf{0}$ sufficiently close to $\mathbf{0} $.
As in the proof of \autoref{lem_suff_con_unique_diam} we obtain
 \begin{align}
  &\big| (-a + s^\ell(\widetilde x),\widetilde x ) - \big( a -s^r(\widetilde y),\widetilde y\big) \big|^2
 > \ 4a^2 - 2a\con^*\left( \widetilde x^\top  H_\ell \widetilde x + \widetilde y^\top  H_r \widetilde y \right) + |\widetilde x|^2 + | \widetilde y|^2 - 2\widetilde x^\top  \widetilde y.
 \label{eq_proof_A_1_pos_definit_help_1}
 \end{align}
Because of $A(\con^*) \ngeq 0$ we can find $\alpha,\beta \in \R^{d-1}$ arbitrarily close to $\mathbf{0} $ with
\begin{align*}
\Big(  \alpha^\top  \ , \ \beta^\top \Big)
A(\con^*) \begin{pmatrix}
  \alpha \\
  \beta
\end{pmatrix} &< 0.
\end{align*}
This inequality can be rewritten to
\begin{equation}
\label{eq_proof_A_1_pos_definit_help_2}
-2a\con^*\left( \alpha^\top D_\ell\alpha + \beta^\top D_r \beta\right)-2\alpha^\top U_\ell^\top U_r\beta+|\alpha|^2 + |\beta|^2 > 0.
\end{equation}
If we choose $|\alpha|$ and $|\beta|$ small enough, we have $\widetilde x := U_\ell\alpha \in O_\ell$ and $\widetilde y := U_r \beta \in O_r$.
Putting \eqref{eq_proof_A_1_pos_definit_help_1} and \eqref{eq_proof_A_1_pos_definit_help_2} together yields
 \begin{align*}
  \big| (-a + s^\ell(\widetilde x),\widetilde x ) - \big( a -s^r(\widetilde y),\widetilde y\big) \big|^2\
 > \ &4a^2 - 2a\con^*\left( \widetilde x^\top  H_\ell \widetilde x + \widetilde y^\top  H_r \widetilde y \right) + |\widetilde x|^2 + | \widetilde y|^2 - 2\widetilde x^\top  \widetilde y \\
 = \ &4a^2 - 2a\con^*\left( \alpha^\top  U_\ell^\top  H_\ell U_\ell \alpha  + \beta^\top  U_r^\top  H_r U_r \beta \right)- 2\alpha^\top  U_\ell^\top  U_r \beta + |U_\ell\alpha|^2 + | U_r \beta|^2\\
 = \ &4a^2 - 2a\con^*\left( \alpha^\top  D_\ell \alpha  + \beta^\top  D_r \beta \right)- 2\alpha^\top  U_\ell^\top  U_r \beta + |\alpha|^2 + |\beta|^2\\
 > \ &4a^2.
 \end{align*}
 This inequality contradicts \autoref{cond_unique_diameter}, and the proof is finished.
 \qed
\end{proof}

The following lemma shows that inequality~\eqref{eq_suff_cond_princ_curv} is sufficient for \autoref{cond_A_eta_pos_semi_definite}:

\begin{lemma}
\label{lem_suff_cond_princ_curv}
If \eqref{eq_suff_cond_princ_curv} holds true,
then \autoref{cond_A_eta_pos_semi_definite} is fulfilled.
\end{lemma}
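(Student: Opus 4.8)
The plan is to exploit the characterization~\eqref{eq_cond_1_unique_diam}: for a given $\con$, \autoref{cond_A_eta_pos_semi_definite} holds if and only if
\[
0 \le 2a\con\big(\alpha^\top D_\ell\alpha + \beta^\top D_r\beta\big) + 2\alpha^\top U_\ell^\top U_r\beta - |\alpha|^2 - |\beta|^2
\]
for all $\alpha,\beta \in \R^{d-1}$. First I would bound each summand on the right from below. Since $D_i = \diag(\kappa_2^i,\ldots,\kappa_d^i)$ has least entry $\kappa_2^i$, the Courant--Fischer bound~\eqref{eq_Courant_Fischer} gives $\alpha^\top D_\ell\alpha \ge \kappa_2^\ell|\alpha|^2$ and $\beta^\top D_r\beta \ge \kappa_2^r|\beta|^2$; and since $U_\ell,U_r$ are orthogonal, Cauchy--Schwarz yields $2\alpha^\top U_\ell^\top U_r\beta \ge -2|U_\ell\alpha|\,|U_r\beta| = -2|\alpha|\,|\beta|$. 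As $2a\con>0$, combining these estimates shows that the right-hand side above is at least
\[
(2a\con\kappa_2^\ell - 1)|\alpha|^2 + (2a\con\kappa_2^r - 1)|\beta|^2 - 2|\alpha|\,|\beta|,
\]
which is the quadratic form in $(|\alpha|,|\beta|) \in \R_+^2$ associated with the symmetric $2\times2$ matrix
\[
M(\con) := \begin{pmatrix} 2a\con\kappa_2^\ell - 1 & -1 \\ -1 & 2a\con\kappa_2^r - 1 \end{pmatrix}.
\]
Hence it suffices to produce some $\con \in (0,1)$ with $M(\con)$ positive semi-definite.

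Next I would check that $M(1)$ is positive definite. By \autoref{lem_first_part_deriv_are_0_and_Hessian_pos_def} we have $\kappa_2^i > 1/(2a)$, so both diagonal entries $2a\kappa_2^\ell - 1$ and $2a\kappa_2^r - 1$ of $M(1)$ are strictly positive, and a short computation gives
\[
\det M(1) = (2a\kappa_2^\ell - 1)(2a\kappa_2^r - 1) - 1 = 2a\,\kappa_2^\ell\kappa_2^r\big(2a - \tfrac{1}{\kappa_2^\ell} - \tfrac{1}{\kappa_2^r}\big),
\]
which is strictly positive precisely because~\eqref{eq_suff_cond_princ_curv} is assumed. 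Thus $M(1)$ is positive definite.

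Finally, the entries of $M(\con)$ are affine in $\con$, so its $(1,1)$-entry and its determinant are continuous in $\con$ and strictly positive at $\con = 1$; therefore they remain strictly positive for every $\con$ in a left neighborhood of $1$. Fixing any such $\con^{*} \in (0,1)$ makes $M(\con^{*})$ positive (semi-)definite, whence the chain of inequalities above yields
\[
2a\con^{*}\big(\alpha^\top D_\ell\alpha + \beta^\top D_r\beta\big) + 2\alpha^\top U_\ell^\top U_r\beta - |\alpha|^2 - |\beta|^2 \ge 0
\]
for all $\alpha,\beta \in \R^{d-1}$, i.e.\ $A(\con^{*}) \ge 0$ with $\con^{*} \in (0,1)$; this is exactly \autoref{cond_A_eta_pos_semi_definite}. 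There is no genuine obstacle here: the only point needing a little care is ensuring $\con^{*} < 1$, and this is where the \emph{strictness} of~\eqref{eq_suff_cond_princ_curv} is used, via the continuity argument, to keep $\det M(\con)$ positive slightly below $\con = 1$.
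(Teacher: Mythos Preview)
Your proof is correct. Both your argument and the paper's hinge on the same Courant--Fischer lower bound $\alpha^\top D_i\alpha \ge \kappa_2^i|\alpha|^2$, but they diverge from there. The paper chooses the explicit value $\con^{*} := \frac{1}{2a}\big(\frac{1}{\kappa_2^\ell}+\frac{1}{\kappa_2^r}\big)$, which lies in $(0,1)$ precisely by~\eqref{eq_suff_cond_princ_curv}, and then completes the square to show directly that
\[
2a\con^{*}\big(\alpha^\top D_\ell\alpha + \beta^\top D_r\beta\big) + 2\alpha^\top U_\ell^\top U_r\beta - |\alpha|^2 - |\beta|^2 \ \ge\ \Big|\sqrt{\tfrac{\kappa_2^\ell}{\kappa_2^r}}\,U_\ell\alpha + \sqrt{\tfrac{\kappa_2^r}{\kappa_2^\ell}}\,U_r\beta\Big|^2 \ \ge\ 0.
\]
You instead apply Cauchy--Schwarz to the cross term, reduce to positive semi-definiteness of the $2\times2$ matrix $M(\con)$, and then invoke continuity of $\det M(\con)$ near $\con=1$. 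Your route is slightly less explicit but arguably more transparent about \emph{why} strictness in~\eqref{eq_suff_cond_princ_curv} is needed; the paper's is shorter and pinpoints the optimal $\con^{*}$. Indeed, if you solve $\det M(\con)=0$ you recover exactly the paper's $\con^{*}$ as the borderline value where $M(\con^{*})$ degenerates to rank one --- so the two arguments are really two faces of the same computation.
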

\begin{proof}
Inequality~\eqref{eq_suff_cond_princ_curv} ensures the existence of an $\con^{*} \in (0,1)$ with
\begin{equation}
\label{eq_suff_cond_princ_curv_help}
 \frac{1}{\kappa_2^\ell} + \frac{1}{\kappa_2^r} = 2a\con^{*}.
\end{equation}
Applying \eqref{eq_Courant_Fischer} (with $H_\ell$ and $H_r$ replaced with the matrices $D_\ell$ and $D_r$, respectively), using \eqref{eq_suff_cond_princ_curv_help} and some obvious transformations yield
\begin{align*}
  &2a\con^{*}\left( \alpha^\top D_\ell\alpha + \beta^\top D_r \beta\right)+2\alpha^\top U_\ell^\top U_r\beta-|\alpha|^2 - |\beta|^2
  \ge \ \left| \sqrt{\frac{\kappa_2^\ell}{\kappa_2^r}}U_\ell\alpha + \sqrt{\frac{\kappa_2^r}{\kappa_2^\ell}}U_r\beta \right|^2
  \ge\  0.
\end{align*}
Consequently, \autoref{cond_A_eta_pos_semi_definite} holds with $\con = \con^{*}$, see \eqref{eq_cond_1_unique_diam}.
\qed
\end{proof}

As mentioned before, \eqref{eq_suff_cond_princ_curv} is only sufficient for the unique diameter close to the poles, \emph{not} necessary. See Example 3.13 in \cite{Schrempp2017} for an illustration of a set with unique diameter between $(-a,\mathbf{0} )$ and $(a,\mathbf{0} )$ for which inequality~\eqref{eq_suff_cond_princ_curv} is \emph{not} fulfilled.

\begin{acknowledgements}
This paper is based on the author's doctoral dissertation written under the guidance of Prof. Dr. Norbert Henze.
The author wishes to thank Norbert Henze for bringing this problem to his attention and for helpful
discussions.
\end{acknowledgements}

\bibliographystyle{spbasic}      
\bibliography{Literatur}

\end{document}